\numberwithin{equation}{section}
\newtheorem{theorem}{Theorem}[section]
\newtheorem{proposition}{Proposition}[section]
\newtheorem{conjecture}{Conjecture}[section]
\newtheorem{lemma}{Lemma}[section]
\theoremstyle{definition}
\theoremstyle{remark}
\newtheorem{remark}{Remark}[section]
\DeclareMathOperator{\Tr}{Tr}
\newcommand{\overbar}[1]{\mkern 1.5mu\overline{\mkern-1.5mu#1\mkern-1.5mu}\mkern 1.5mu}
\def \RP {\mathbb{RP}}
\newcommand\numberthis{\addtocounter{equation}{1}\tag{\theequation}}
\begin{document}

\title[]{Geodesic distance Riesz energy on  projective spaces}
\keywords{Riesz energy, geodesic distance, projective spaces.}

\author[]{Dmitriy Bilyk, Ryan W. Matzke, Joel Nathe}
\address{School of Mathematics, University of Minnesota, Minneapolis, MN 55455, USA}
\email{dbilyk@umn.edu}

\address{Department of Mathematics, Vanderbilt University, Nashville, TN 37240, USA}
\email{ryan.w.matzke@vanderbilt.edu}

\address{School of Mathematics, University of Minnesota, Minneapolis, MN 55455, USA}
\email{nathe031@umn.edu}

\begin{abstract}
We study probability measures that minimize the Riesz energy  with respect to the geodesic distance $\vartheta (x,y)$ on  projective spaces  $\mathbb{FP}^d$ (such energies arise from the 1959 conjecture of Fejes T\'oth about sums of non-obtuse angles), i.e. the integral $$ \frac{1}{s} \int_{\mathbb{FP}^d} \int_{\mathbb{FP}^d} \big( \vartheta (x,y) \big)^{-s} d\mu(x) d\mu (y) \,\,\, \textup{ for } \,\,\, s<d$$
and find ranges of the parameter $s$ for which the energy is minimized by the uniform measure $\sigma$ on $\mathbb{FP}^d$. To this end, we use  various methods of harmonic analysis, such as Ces\`aro  averages of Jacobi expansions and $A_1$ inequalities, and establish a rather general theorem guaranteeing that certain energies with singular kernels are minimized by $\sigma$.

In addition, we obtain further results and present  numerical evidence, which uncover a peculiar effect  that  minimizers this energy   undergo numerous phase transitions, in sharp contrast with many analogous known examples (even the seemingly similar geodesic Riesz energy on the sphere), which usually have only one transition (between uniform and discrete  minimizers). 

\end{abstract}

\maketitle

\section{Introduction and background}\label{s.intro}

The starting point of this paper is the following energy minimization problem. For two points on the sphere $x,y \in \mathbb S^{d} \subset \mathbb R^{d+1}$, let us define 
\begin{equation}\label{e.theta}
\theta (x, y) = \min \{ {\arccos  (x\cdot y) , \pi - \arccos  (x\cdot y) \} = \arccos  (| x\cdot y|)}, 
\end{equation}
i.e. $\theta (x, y)$ is the acute (non-obtuse) angle between the lines generated by the vectors $x$ and $y$. For $\lambda \in \mathbb R$,  our goal is  to understand the optimizers (minimizers for $\lambda \le 0$ and maximizers for $\lambda >0$) of the energy integral 
 \begin{equation}\label{e.def}
 I_\lambda (\mu) =  \int_{\mathbb S^d} \int_{\mathbb S^d} \big( \theta (x,y)  \big)^\lambda \, d\mu (x) \, d\mu (y)
 \end{equation}
 over measures $\mu$ in the set $\mathcal P (\mathbb S^d)$ of Borel probability measures on $\mathbb S^d$ (for $\lambda = 0$ the kernel is replaced by $-\log \theta (x,y)$). 
 
 The motivation for studying this question is two-fold. First, it is directly connected to 
 a conjecture about the sum of acute angles between lines posed by Fejes T\'oth in 1959 \cite{FejT}, see \S\ref{s.FT}, and has been studied in this context \cite{BilM,LimM21,LimM22}. Second,  perhaps even more importantly, since $\theta (x,y)$ can be interpreted as the geodesic distance on the real projective space (see \S\ref{s.proj} for more details), the energy integral \eqref{e.def} falls into the general category of Riesz energies (distance integrals) which are ubiquitous in potential theory, mathematical physics, metric geometry, etc. 
 
Under this interpretation, we uncover a very unusual phenomenon: optimizers of $I_\lambda (\mu)$  exhibit behavior which is quite different from  other known examples of analogous distance integrals, even the seemingly similar geodesic distance Riesz energy on the sphere $\mathbb S^d$. In particular, rather than one simple phase transition between continuous and discrete optimal measures (see \S\ref{s.typical} for typical  examples), optimizers of $I_\lambda (\mu)$  appear to undergo a cascade of phase transitions with optimal measures of different dimensions.

 We study the energy $I_\lambda$ in the  broader context of geodesic distance Riesz energies on compact connected two-point homogeneous spaces, and the main results about optimizers of $I_\lambda$, Theorems \ref{t.main} ($d\ge 2$) and \ref{t.d1} ($d=1$), are consequences of the much more general Theorem \ref{t.geodriesz} about such energies.  To prove this theorem, in \S\ref{sec:Jacobi Expansion of Geod. Riesz} we present a rather technical computation establishing positivity of Jacobi coefficients of the geodesic distance Riesz kernels for a certain range of exponents, and in \S\ref{s.singularkernels} we use and develop a variety of analytic tools to finish the proof, as well as obtain the first rather general result, Theorem \ref{thm:general}, about minimization of energies with singular kernels by the uniform measure. In \S\ref{s.d1} we further discuss the case $d=1$ and its relation to the geodesic Riesz energy on the sphere. 
 
 In addition, in \S\ref{s.<1} we present results and numerical evidence, which demonstrate surprising behavior of maximizers of $I_\lambda$ on $\mathbb S^d$ for $0<\lambda <1$. Section \ref{sec.discrete} turns to the case $\lambda\ge 1$ and presents  results about discreteness of  optimal measures, which are based on the characterization of optimizers for the chordal Riesz energy in \S\ref{s.chordal}.    Below we present the motivation, background, and the main results of the paper in more detail. 


\subsection{Fejes T\'oth conjecture on the sum of acute angles} \label{s.FT}
In 1959  Fejes T\'oth  \cite{FejT}  posed the following natural conjecture in discrete geometry:

\begin{conjecture}[{\bf{Fejes T\'oth conjecture}}]\label{c.FT} Let   $x_1$, $x_2,\dots, x_N$ be (not necessarily distinct) points on the sphere $\mathbb S^{d}$. Then the discrete energy
\begin{equation}
\sum_{i,j=1}^N \theta (x_i, x_j)
\end{equation}
is maximal when the points $x_1,\dots, x_{d+1}$ are mutually orthogonal and $x_k = x_{k-(d+1)}$ for all $k$ with $d+1<k\le N$.
\end{conjecture}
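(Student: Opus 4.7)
The plan is to relax the discrete conjecture to a continuous energy problem and attack it via the harmonic-analytic machinery the paper develops for $I_\lambda$. Writing $\mu_N = \tfrac{1}{N}\sum_{i=1}^N \delta_{x_i}$ for the empirical measure, one has $\sum_{i,j=1}^N \theta(x_i,x_j) = N^2 I_1(\mu_N)$, so up to a combinatorial rounding when $N \not\equiv 0 \pmod{d+1}$ the conjecture reduces to showing that $I_1$ is maximized on $\mathcal{P}(\mathbb{S}^d)$ by the measure $\mu^\star$ equidistributed on $d+1$ mutually orthogonal points of $\mathbb{RP}^d$, with value $\tfrac{d}{d+1}\cdot\tfrac{\pi}{2}$, and that every maximizer has this form up to rotation. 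Note that for $d \ge 2$ this value strictly exceeds $I_1(\sigma)$ (e.g.\ $I_1(\sigma) = \int_0^1 \arccos t\,dt = 1 < \pi/3$ when $d = 2$), so the maximum is not attained at the uniform measure, which rules out the easiest form of the Delsarte bound.

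The natural route to a sharp bound is a Delsarte--Yudin majorant argument. Writing $\theta(x,y) = f(|x\cdot y|)$ with $f(t) = \arccos t$, one expands zonal functions of $|x\cdot y|$ in the Jacobi polynomials $P_k^{(\alpha,\beta)}$ adapted to $\mathbb{RP}^d$ (parameters $\alpha = d/2 - 1$, $\beta = -1/2$). The objective is to construct an auxiliary function $Q \ge f$ on $[0,1]$ whose Jacobi expansion has all nonconstant coefficients $\le 0$ and whose constant term, namely $\int\!\!\int Q(|x\cdot y|)\,d\sigma(x)\,d\sigma(y)$, equals $\tfrac{d}{d+1}\cdot\tfrac{\pi}{2}$. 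Any such $Q$ yields
\[
I_1(\mu) = \int\!\!\int f(|x\cdot y|)\,d\mu\,d\mu \;\le\; \int\!\!\int Q(|x\cdot y|)\,d\mu\,d\mu \;\le\; \tfrac{d}{d+1}\cdot\tfrac{\pi}{2},
\]
the second inequality following from the positivity of the Jacobi moments of a probability measure. Equality throughout forces $f = Q$ on the support of $\mu \otimes \mu$, driving it into $\{|x\cdot y| \in \{0,1\}\}$ and hence onto an orthogonal frame, while the vanishing of the Jacobi moments at frequencies where $\widehat{Q}_k < 0$ promotes $\mu$ to the uniform measure on that frame.

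The principal obstacle, and the reason the conjecture has resisted proof for over six decades, is producing the magic function $Q$. The Jacobi coefficients of $f$ itself change sign at $\lambda = 1$: by the computations of \S\ref{sec:Jacobi Expansion of Geod. Riesz} and the numerics of \S\ref{s.<1}, the exponent $\lambda = 1$ sits precisely at a phase transition between regimes where $\sigma$ is extremal and regimes where discrete measures of various dimensions are, so $f$ is not itself admissible as $Q$ for $d \ge 2$. Three feasible partial steps are: (i) settle $d = 1$ by direct Fourier analysis on $\mathbb{RP}^1 \cong \mathbb{R}/\pi\mathbb{Z}$, where the triangle wave $\arccos|\cos\phi| = \tfrac{\pi}{4} - \tfrac{2}{\pi}\sum_{j \ge 0}\cos(2(2j+1)\phi)/(2j+1)^2$ has Fourier coefficients of a single sign and $Q = f$ suffices, recovering Theorem \ref{t.d1} and the curious fact that for $d=1$ the uniform measure is \emph{also} a maximizer; (ii) attempt an SDP-based computer-assisted construction of $Q$ in low dimensions $d = 2, 3$, in the spirit of the Cohn--Elkies sphere-packing bounds; (iii) pursue a structural variational argument exploiting the fact that $\mu^\star$ is a minimum-size projective $2$-design, possibly routed through the chordal Riesz energy characterization of \S\ref{s.chordal}, which is more tractable because the chordal kernel is a polynomial in $|x\cdot y|$.
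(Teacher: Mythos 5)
You should be clear at the outset that the statement you are addressing is Conjecture \ref{c.FT}, which the paper does not prove and which remains open for $d>1$; there is therefore no proof in the paper to compare against, and your text is a research plan rather than a proof. Judged as a plan, it is sensible and consistent with what is known: your reduction $\sum_{i,j}\theta(x_i,x_j)=N^2 I_1(\mu_N)$, the value $I_1(\mu_{ONB})=\frac{d}{d+1}\cdot\frac{\pi}{2}$, the computation $I_1(\sigma)=1<\pi/3$ on $\mathbb S^2$ (so $\sigma$ is not a maximizer of $I_1$ for $d\ge 2$, cf.\ Lemma \ref{l.lambda<1}), the Jacobi parameters $(\alpha,\beta)=(\tfrac d2-1,-\tfrac12)$ for $\mathbb{RP}^d$, and the Delsarte--Yudin scheme (a majorant $Q\ge f$ with nonpositive nonconstant Jacobi coefficients, equality forcing the support into $\{|x\cdot y|\in\{0,1\}\}$ and a convexity/weight argument as in Lemma \ref{lem:Linear programming for ONB} forcing $\mu_{ONB}$) are all correct. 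Your item (i) is also essentially right: on $\mathbb{RP}^1\cong\mathbb S^1$ the triangle-wave expansion has single-signed nonconstant coefficients, so $Q=f$ works and one recovers the continuous $d=1$ statement and the coexistence of $\sigma$ and $\mu_{ONB}$ among maximizers, in agreement with Theorem \ref{t.d1} (the paper instead obtains this via the isometry $\mathbb{RP}^1\cong\mathbb S^1$ and \cite{BilD}, and via \cite{BilM,FodVZ}).

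The genuine gaps are two. First, the central step --- producing the ``magic function'' $Q$ with $Q\ge\arccos$, $\widehat Q_k\le 0$ for $k\ge1$, and sharp constant term $\frac{d}{d+1}\cdot\frac{\pi}{2}$ --- is precisely the open problem; nothing in your proposal constructs it or gives evidence it exists for $d\ge2$, and the paper's own partial results (\S\ref{sec.discrete}, Theorem \ref{t.FTlarge}, and \cite{LimM22}) only succeed for exponents $\lambda\ge\lambda^*\in[1,2)$, where the LP comparison with the chordal kernel is available because $(\arccos t)^2$ has a finite derivative at $t=1$; that mechanism breaks exactly at $\lambda=1$. Second, your phrase ``up to a combinatorial rounding when $N\not\equiv 0\pmod{d+1}$'' hides a real issue: the continuous conjecture is equivalent to the discrete one only when $N=k(d+1)$, as the paper states after Conjecture \ref{c.IFT}. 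For general $N$ the empirical measure of the conjectured optimal configuration does not attain the continuous maximum, so even a full solution of the integral problem would not yield Conjecture \ref{c.FT} for all $N$ without an additional discrete argument (this is already visible on $\mathbb S^1$, where the odd-$N$ case in \cite{FodVZ,BilM,Pet} requires separate work). So the proposal is a reasonable framing of the problem, but it neither closes the conjecture nor reduces it to anything easier than its known equivalent forms.
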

 In plain words, the conjecture states that  the sum of non-obtuse angles between $N$ points on the sphere is maximized by  periodically repeated elements of an orthonormal basis. In particular, if $N=k(d+1)$, then $k$ copies of the same orthonormal basis of $\mathbb R^{d+1}$ should maximize the sum. Despite its simple formulation, the conjecture is still open for $d>1$ (even in the aforementioned special case), see \cite{BilM,FodVZ,Gor,Pet} for recent partial results.    

Fejes T\'oth stated the conjecture on $\mathbb S^2$ (and proved it for $N\le 6$), but the generalization of the conjecture to other dimensions $d\ge 1$ is straightforward.
This conjecture admits a natural continuous {analogue}:

\begin{conjecture}[{\bf{Integral Fejes T\'oth conjecture}}]\label{c.IFT} The maximal value of the energy integral
\begin{equation}\label{e.ift}
I_1 (\mu) = \int_{\mathbb S^d} \int_{\mathbb S^d} \theta (x,y) \, d\mu (x) \, d\mu (y)
\end{equation} 
over all Borel probability measures on $\mathbb S^d$ is achieved by the  uniform measure on the elements of an orthonormal basis $\{e_i\}_{i=1}^{d+1}$ of $\mathbb R^{d+1}$, i.e. $\displaystyle{\mu_{ONB} = \frac{1}{d+1} \sum_{i=1}^{d+1} \delta_{e_i}}$. 
\end{conjecture}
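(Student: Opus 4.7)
The plan is to analyze $I_1$ via the harmonic expansion on $\mathbb{RP}^d$, on which $\theta(x,y)$ is, up to a scalar, the geodesic distance. Since $I_1$ is a quadratic form in $\mu$, it admits a decomposition
\[
I_1(\mu) = \hat{\theta}_0 + \sum_{k \ge 1} \hat{\theta}_k\, \|\mathrm{Proj}_k\mu\|^2,
\]
where $\hat{\theta}_k$ are the Jacobi coefficients of $t \mapsto \arccos|t|$ and $\mathrm{Proj}_k\mu$ is the projection of $\mu$ onto the $k$-th Jacobi (equivalently, even-degree spherical harmonic) subspace on $\mathbb{RP}^d$. The problem is thereby reduced to a sign-and-magnitude analysis of the $\hat{\theta}_k$.

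A direct computation gives the target value $I_1(\mu_{\mathrm{ONB}}) = \tfrac{d\pi}{2(d+1)}$, since $\theta(e_i,e_j) = \pi/2$ for $i\ne j$. I would next verify the first-order (Euler--Lagrange) condition for $\mu_{\mathrm{ONB}}$ to be a critical point: the potential
\[
U^{\mu_{\mathrm{ONB}}}(x) = \frac{1}{d+1}\sum_{i=1}^{d+1}\theta(x,e_i)
\]
must satisfy $U^{\mu_{\mathrm{ONB}}}(x) \le \tfrac{d\pi}{2(d+1)}$ pointwise on $\mathbb{S}^d$, with equality on $\mathrm{supp}(\mu_{\mathrm{ONB}})$. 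In coordinates with $\sum x_i^2 = 1$ this reduces to $\sum_i \arcsin|x_i| \ge \pi/2$, which follows by induction on the number of nonzero coordinates from the two-variable comparison $\arcsin u + \arcsin v \ge \arcsin\sqrt{u^2+v^2}$; the latter is equivalent (after the substitution $u = \sin\alpha,v = \sin\beta$) to the identity $\sin^2(\alpha+\beta)-\sin^2\alpha-\sin^2\beta = 2\sin\alpha\sin\beta\cos(\alpha+\beta)$, which is nonnegative whenever $\alpha+\beta \le \pi/2$ (the complementary case being trivial).

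The hard part is upgrading this local information to a global maximum. Because $\arccos|t|$ is non-smooth at $t=0$, the Jacobi coefficients $\hat{\theta}_k$ do not all share a sign (which is precisely why the paper invokes Ces\`aro summability and $A_1$ inequalities later in \S\ref{sec:Jacobi Expansion of Geod. Riesz}), so $I_1$ is not globally concave and the Euler--Lagrange condition alone is not enough. My proposed workaround is a Delsarte/linear-programming majorant: construct an auxiliary $g : [-1,1] \to \mathbb{R}$ with (i) $g(t) \ge \arccos|t|$ pointwise, with $g(0) = \pi/2$ and $g(\pm 1) = 0$, and (ii) Jacobi expansion with $\hat{g}_k \le 0$ for all $k \ge 1$. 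Any such $g$ would yield
\[
I_1(\mu) \le \iint g(x\cdot y)\, d\mu(x)\, d\mu(y) \le \hat{g}_0 = I_1(\mu_{\mathrm{ONB}}),
\]
closing the argument. The chief obstacle is the explicit construction of such a ``magic function'': this is the same stumbling block encountered in Cohn--Elkies sphere-packing bounds, and is precisely why Conjecture \ref{c.IFT} has remained open for $d>1$. Any serious attempt would need to combine SDP-based numerical searches with the tight Jacobi-coefficient estimates developed in \S\ref{sec:Jacobi Expansion of Geod. Riesz}, and would also have to account for the subtle fact that the equality set of the first-order inequality strictly contains the support of $\mu_{\mathrm{ONB}}$ (e.g., $(e_i+e_j)/\sqrt{2}$ saturates it), so $g$ must be carefully tailored to pick out the orthonormal-basis configuration.
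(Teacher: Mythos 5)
You are addressing Conjecture \ref{c.IFT}, which the paper itself does not prove: it is open for every $d\ge 2$, and the paper only records the known case $d=1$ (via the isometry $\mathbb{RP}^1\cong\mathbb S^1$, Theorem \ref{t.d1expand}) together with the adjacent results of \S\ref{sec.discrete}. So there is no proof in the paper to compare against, and judged as a proof attempt your text has a genuine gap --- indeed the gap is the entire content of the conjecture. The parts you actually verify are correct: $I_1(\mu_{ONB})=\tfrac{d\pi}{2(d+1)}$, and the first-order condition $U^{\mu_{ONB}}(x)\le I_1(\mu_{ONB})$, which you correctly reduce to $\sum_i\arcsin|x_i|\ge\pi/2$ and prove via the two-variable subadditivity inequality. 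But criticality of $\mu_{ONB}$ gives no global information here, because $-\theta(x,y)$ is not conditionally positive definite on $\mathbb{RP}^d$ for $d\ge 2$ (cf.\ Lemma \ref{l.lambda<1}), so $I_1$ is not concave on probability measures and the Euler--Lagrange condition cannot be upgraded by soft arguments. (A side remark: the Ces\`aro/$A_1$ machinery of \S\ref{s.singularkernels} is there to handle \emph{singular} kernels $F_s$ with $s\ge 0$, not the sign-indefiniteness of the coefficients of $\arccos|t|$.)

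Your proposed remedy --- a Delsarte/linear-programming majorant $g\ge\arccos|t|$ with $\widehat{g}_k\le 0$ for $k\ge 1$ --- is exactly the scheme the paper carries out one step away from the conjecture: in Lemma \ref{lem:Behavior for geo dist squared for ONB} the kernel $\theta^2$ (i.e.\ $s=-2$) is majorized by an affine image of the chordal Riesz kernel $R_{-2}$, which plays the role of your $g$, and combined with Lemma \ref{lem:Linear programming for ONB} this yields Theorem \ref{t.FTlarge}; the conjecture is precisely the assertion that the transition value can be pushed from $\lambda=2$ down to $\lambda=1$ (i.e.\ $s^*=-1$). For $\lambda=1$ no such function is known, and you do not construct one, so the argument stops exactly where the open problem begins. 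Two further points to keep in mind if you pursue this: (a) the touching conditions $g(0)=\pi/2$, $g(\pm1)=0$ give only $I_1(\mu)\le\widehat{g}_0$ and $\widehat{g}_0\ge I_g(\mu_{ONB})=I_1(\mu_{ONB})$; the equality $\widehat{g}_0=I_1(\mu_{ONB})$ that closes the chain is an \emph{extra} requirement, namely complementary slackness $\widehat{g}_k\,\|\mathrm{Proj}_k\mu_{ONB}\|^2=0$ for all $k\ge1$, not a consequence of (i)--(ii); and (b) since Lemma \ref{lem:Behavior for geodesic distance for ONB} shows that validity of the conjecture forces non-discrete maximizers as well, any admissible $g$ must touch $\arccos|t|$ on a set large enough to accommodate them, which is consistent with your observation about $(e_i+e_j)/\sqrt2$ and is part of why the construction is delicate.
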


The continuous conjecture is equivalent to the discrete one in the case when $N$ is a multiple of the ambient dimension, i.e. $N= k (d+1)$. 
 Just as in the discrete case, it remains open,  except for the case $d=1$, i.e. on the circle $\mathbb S^1$ 
  (this case  will be revisited in Section \ref{s.d1}).  Various versions and modifications of these conjectures have attracted a fair amount  of attention in the recent years from several groups of authors \cite{BilM,FodVZ,Gor,LimM21,LimM22}. 
  
  Some  of this work dealt with optimizing the energy with the kernel given by powers of the acute angle $\theta (x,y)$, i.e. precisely the integral $I_\lambda (\mu)$.   In particular, it was proved in \cite{LimM22} that there exists $\lambda_0 \in [1,2)$ such that for all $\lambda \ge \lambda_0$, a version of the Fejes T\'oth conjecture holds for $I_{\lambda}$, i.e. the energy $I_\lambda  (\mu)$ is maximized by the measure $\displaystyle{\mu_{ONB}}$. 
  Notice that Conjecture \ref{c.IFT} essentially states that $\lambda_0 =1$, see \S\ref{sec.discrete} for more details.  In the present paper we are mostly interested in the behavior of the energy optimizers of  $I_\lambda (\mu) $ for $\lambda <1$, and we shall see that this behavior  is quite atypical. 

 \subsection{Distance integrals (Riesz energies) and the typical behavior of optimizers}\label{s.typical}

 Observe that the kernel $\theta (x,y)$ remains invariant if  $x\in \mathbb S^d$ is replaced by its  opposite {(antipode)} $-x$. Therefore, it is natural to view the question as a problem on the real projective space $\mathbb{RP}^d$. In fact, in this case, up to a multiplicative constant, $\theta (x,y)$ is the geodesic distance  on $\mathbb{RP}^d$, see \eqref{e.rhotheta}. 

Therefore, the main object of our study, the energy integral $I_\lambda (\mu)$  defined in  \eqref{e.def} is an example of a distance integral --  a common object in metric geometry. 
Let $(X,\rho)$ be a compact metric space, for $\lambda \in \mathbb R$, and a Borel probability measure $\mu$ on $X$, the distance integral is defined as  
\begin{equation}\label{e.di}
 I_{\rho,\lambda} (\mu) =  \int_{ X} \int_{ X} \big( \rho (x,y)  \big)^\lambda \, d\mu (x) \, d\mu (y),
 \end{equation}
where for  $\lambda =0$ one employs the logarithmic kernel $- \log \rho (x,y)$. Maximizers of these energies (or minimizers when  $\lambda \le 0$) provide information about the geometric structure of the metric space $X$. This optimization is a continuous analogue of optimizing sums of powers of pairwise distances between $N$ points in $X$. In the case when $X\subset \mathbb R^n$  and $\rho (x,y) = \| x-y \|$, these objects are commonly known as Riesz energies, and this name often propagates to other metric spaces. 

The behavior of optimizers of distance integrals in standard examples typically goes through  one  phase transition: for large $\lambda$, above a certain threshold $\lambda_0$ depending on the space, the maximizers are discrete measures (often supported on sets of points separated by $\operatorname{diam}(X)$, while below the threshold  optimizers are often unique and are given by an isometry-invariant measure, if it exists, e.g. the uniform measure $\sigma$ on $\mathbb S^d$ or $\mathbb{RP}^d$ (or other two-point homogeneous spaces) or the Haar measure if $X$ admits group structure.  
{At the transition value $\lambda = \lambda_0$, the class of optimizers is generally fairly rich and contains measures supported on sets of different dimensions.

Let us provide some examples to illustrate the above heuristic.
\begin{itemize}
\item compact $X\subset  \mathbb R^n$, $\rho (x,y) = \| x-y \|$:  for $\lambda >2$, the maximizers are discrete with at most $n+1$  extremal points of {the convex hull of} $X$ in the support \cite[Theorem 12]{Bj}, while for each $\lambda \in (-\operatorname{dim} (X),  2)$ there is a unique  optimizer, see e.g.
 \cite[Theorem 4]{Bj}, \cite[Theorems 4.4.5 and 4.4.8]{BorHS}.  
\item $X = \mathbb S^d$,  $\rho (x,y) = \| x-y \|$: for $\lambda > 2$ the maximizers are discrete measures with mass $\frac12$ at two opposite poles, i.e. measures of the form $\mu = \frac12 (\delta_x + \delta_{-x})$, while for $-d < \lambda < 2$, the unique optimizer is the uniform surface measure $\sigma$; when $\lambda =2$, maximizers are all measures with the center of mass at the origin \cite[Theorems 5 and 7]{Bj}, \cite[Theorems 4.6.4 and 4.6.5]{BorHS}. 
\item   $X = \mathbb S^d$ and  $\rho$ is the geodesic distance: in this case the phase transition is different ($\lambda_0 =1$) with the same optimizers as in the previous example for $\lambda > \lambda_0$ and $-d  < \lambda < \lambda_0$, while at $\lambda = \lambda_0 =1$ maximizers are exactly all centrally symmetric measures \cite[Theorem 1.1]{BilD}.
\item  $X = \{-1,1\}^d$ is the Hamming cube and $\rho$ is the Hamming distance: the phase transition happens at $\lambda_0=1$,  with the uniform measure $\sigma_H = \frac1{2^d} \sum_{x\in X} \delta_x$  as the unique  maximizer for $0<\lambda < 1$,   measures of the form $\mu = \frac12 (\delta_x + \delta_{-x})$  maximizing the energy for $\lambda >1$, while for $\lambda =1$ maximizers   are all measures with center of mass at $0$.
\item  $X=\mathbb{RP}^d$ and $\rho$ is  the chordal distance: this situation  mirrors the case of the Euclidean distance on the sphere.  There is one transition at $\lambda_0 = 2$, with the unique optimizer $\sigma$ for $-d <\lambda <2$, all isotropic measures being maximizers for $\lambda =2$, and only discrete minimizers $\mu_{ONB}$ for $\lambda >2$, see \cite[Theorem 4.1]{CheHS} and \cite[Theorem 1.1]{AndDGMS} as well as the discussion in Section  \ref{s.chordal} (this result also holds for other projective spaces).
\end{itemize}


\subsection{Atypical behavior of optimizers in the case of the geodesic distance on projective spaces} 
Since the energy  $I_\lambda$ defined in \eqref{e.def} is essentially  the distance integral for $X = \mathbb{RP}^d$ equipped with the geodesic distance,
it might be tempting to suppose that the behavior of its optimizers follows the same pattern. Indeed, if the Fejes T\'oth conjecture is true, there should be a phase transition at $\lambda_0 =1$  (there is a large variety of measures that achieve the same value in \eqref{e.ift} as $\mu_{ONB}$, see e.g. Proposition \ref{p.poleequator} and Lemma \ref{lem:Behavior for geodesic distance for ONB}), and for all $\lambda >1$, it is easy to show that the measure $\mu_{ONB}$ would be the unique maximizer up to rotations and symmetries (see Lemma \ref{lem:Linear programming for ONB}). Notice that an orthonormal basis plays the same role on $\mathbb{RP}^d$ as a pair of opposite poles on $\mathbb S^d$ in the above examples,  in the sense that both are maximal sets of points whose pairwise distances are equal to the diameter of the space.

However, it appears that when it comes to $\lambda < 1$, the behavior of the optimizers of $I_\lambda$ is quite different. {In Section \ref{s.<1},} we shall present numerical evidence which suggests that at least on $\mathbb S^2$, the uniform measure $\sigma$ is not a maximizer of the energy and actual maximizers are measures supported on various smaller sets, and one may perhaps expect a cascade of phase transitions.

Nevertheless, we shall   show that the behavior described above  indeed persists for low   values of $\lambda$, namely, when $- d < \lambda \le  - (d-2)$. When $d\ge 2$, these values are negative, hence, we are interested in {\emph{minimizing}} the energy integral. Notice that the cutoff $\lambda > - d $ is natural as otherwise the energy is  infinite for {every measure} $\mu$. One of the main results of this paper states the following. 
%

\begin{theorem}\label{t.main}
Let $d\ge 2$. For $\lambda  \in (-d , - d+2]$,  the uniform surface measure $\sigma$ on the sphere $\mathbb S^d$  uniquely minimizes the energy integral $I_\lambda (\mu)$ defined in \eqref{e.def}  among all Borel probability measures. 
\end{theorem}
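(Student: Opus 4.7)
The plan is to deduce Theorem \ref{t.main} from the more general result about geodesic Riesz energies on compact connected two-point homogeneous spaces announced in the excerpt as Theorem \ref{t.geodriesz}. Since the kernel $\theta(x,y)^\lambda$ is invariant under the antipodal map $x \mapsto -x$, the energy $I_\lambda$ on $\mathbb{S}^d$ coincides with the geodesic Riesz energy on the real projective space $\mathbb{RP}^d$, where (up to a normalizing constant) $\theta$ is precisely the Riemannian distance $\vartheta$. Thus it suffices to show that $\sigma$ uniquely minimizes $\int\!\!\int \vartheta(x,y)^\lambda\, d\mu\, d\mu$ on $\mathbb{RP}^d$ for $\lambda \in (-d,-d+2]$.

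The standard route on a two-point homogeneous space is a positive-definiteness argument via spherical harmonics. On $\mathbb{RP}^d$, the zonal eigenfunctions of the Laplace--Beltrami operator are Jacobi polynomials $P_n^{(\alpha,\beta)}$ with $\alpha = d/2-1$, $\beta = -1/2$ in the variable $\cos 2\vartheta$, and one would like to expand
\begin{equation*}
\vartheta^\lambda \;=\; a_0 \;+\; \sum_{n\ge 1} a_n\, P_n^{(\alpha,\beta)}(\cos 2\vartheta),
\end{equation*}
so that the addition formula yields $I_\lambda(\mu) - I_\lambda(\sigma) = \sum_{n\ge 1} a_n\,\|\pi_n \mu\|^2$, where $\pi_n\mu$ denotes the projection of $\mu$ onto the $n$-th eigenspace. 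If one proves $a_n \ge 0$ for all $n\ge 1$, and moreover $a_n > 0$ for infinitely many $n$, then $I_\lambda(\mu) \ge I_\lambda(\sigma)$ with equality forcing $\pi_n \mu = 0$ for every such $n$, hence $\mu = \sigma$.

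The main obstacle, and what I expect to occupy the bulk of the work, is establishing the positivity of the Jacobi coefficients
\begin{equation*}
a_n \;=\; c_n \int_0^{\pi/2} \vartheta^\lambda\, P_n^{(\alpha,\beta)}(\cos 2\vartheta)\, (\sin\vartheta)^{d-1}(\cos\vartheta)^0\, d\vartheta
\end{equation*}
in the precise range $\lambda \in (-d,-d+2]$. A direct integral computation is unlikely to succeed; instead, following the strategy indicated in \S\ref{sec:Jacobi Expansion of Geod. Riesz}, I would pass to a Ces\`aro-summed form of the Jacobi series of $\vartheta^\lambda$ of an appropriate order depending on $d$, and exploit the known positivity of sufficiently high Ces\`aro means of Jacobi polynomials together with $A_1$-type weighted inequalities to conclude $a_n \ge 0$. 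The endpoint $\lambda = -(d-2)$ appears naturally as the critical exponent where the Riesz potential is comparable to the fundamental solution of the Laplacian, and the limiting case $\lambda \to -d$ is exactly the range where the kernel is $L^1$ on the diagonal, explaining the threshold.

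Finally, because $\vartheta^\lambda$ is genuinely singular at $\vartheta = 0$ for $\lambda < 0$, the Jacobi expansion cannot be manipulated pointwise; to rigorously identify $I_\lambda(\mu) - I_\lambda(\sigma)$ with the sum $\sum a_n\|\pi_n\mu\|^2$, I would invoke the general Theorem \ref{thm:general} on minimization by $\sigma$ of energies with singular kernels, which precisely handles this justification (via a truncation/monotone-convergence argument that uses the established coefficient positivity). Uniqueness then follows by noting that strict positivity of all but finitely many $a_n$ together with completeness of the Jacobi system on $\mathbb{RP}^d$ forces any minimizer to have trivial projections onto every nontrivial eigenspace, hence to equal $\sigma$.
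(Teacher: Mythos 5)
Your overall architecture matches the paper's (pass to $\mathbb{RP}^d$, expand the kernel in Jacobi polynomials with $\alpha=\tfrac d2-1$, $\beta=-\tfrac12$, prove positivity of the coefficients, and treat the singularity separately), but the central step --- showing that the Jacobi coefficients of $\vartheta^\lambda$ are strictly positive precisely for $\lambda\in(-d,-d+2]$ --- is missing, and the mechanism you propose for it is misassigned. In the paper, Ces\`aro summation and the $A_1$ maximal inequality are \emph{not} used to prove coefficient positivity; they constitute the second half of the argument, needed only because Theorem \ref{thm:uniform measure min, continuous} requires a continuous kernel: Ces\`aro means of the Jacobi series give the result for absolutely continuous measures with bounded density, and the $A_1$ estimate of Proposition \ref{p.max} for the ball averages $f^{(\varepsilon)}$ yields $I_{f^{(\varepsilon)}}(\mu)\to I_f(\mu)$ by dominated convergence, which is exactly the hypothesis of Theorem \ref{thm:sigma a minimizer for epsilon convergence}. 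The positivity of the coefficients (Theorem \ref{thm:Pos Jacobi Coefficients Geod Riesz}) is instead proved by a direct computation: write the coefficient via the Rodrigues formula, integrate by parts $n$ times (Lemma \ref{lem:Fa IBP} supplies the vanishing boundary terms and integrability for $-1<s<D$), express $\tfrac{d^n}{dt^n}F_s$ as a nonnegative combination of kernels $F_{s_i,b_i,c_i}$ with $s_i\ge s+1$ (Proposition \ref{prop:FDerivatives}), and verify positivity of the resulting integrals, where the terms with odd $t^{c_i}$ require Proposition \ref{prop:Positivitiy of odd terms}, valid exactly when $s_i\ge 2(\alpha-\beta)$, i.e. $s\ge 2(\alpha-\beta)-1=d-2$ on $\mathbb{RP}^d$. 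That inequality, not a heuristic comparison of the Riesz potential with the fundamental solution of the Laplacian, is the actual source of the endpoint $\lambda=-(d-2)$; ``known positivity of Ces\`aro means of Jacobi polynomials'' would not give you $a_n>0$ for this kernel in this range.

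Two further points. Your uniqueness argument is stated too weakly: strict positivity of ``all but finitely many'' $a_n$ only forces $\pi_n\mu=0$ for those $n$ with $a_n>0$, leaving open the possibility of a minimizer with nonzero projection onto an eigenspace where $a_n=0$; the paper proves $a_n>0$ for \emph{every} $n\ge1$ and then concludes $\mu=\sigma$ via Lemma \ref{l.App1}. Also, invoking Theorem \ref{thm:general} as a black box is circular here, since its proof is precisely the Ces\`aro/$A_1$ machinery that the paper develops to prove Theorem \ref{t.main} in the first place; moreover its proof proceeds through the averaging approximations and Proposition \ref{p.max}, not through a truncation/monotone-convergence argument, and it does not itself ``use the established coefficient positivity'' to justify the convergence step.
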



Uniqueness in Theorem \ref{t.main} (as well as in Theorem \ref{t.d1} below)  is understood in the projective sense, up to central symmetries  or up to shifting mass to antipodal points,  or, more precisely, measures $\mu$ and $\nu$ satisfying $\mu  (E \cup -E) =  \nu (E \cup - E)$ for all Borel sets $E\subset \mathbb S^d$ are not distinguished.

The proof of Theorem \ref{t.main}   is presented in Sections \ref{sec:Jacobi Expansion of Geod. Riesz}-\ref{s.singularkernels} in the framework of compact connected two-point homogeneous spaces. In fact, Theorem \ref{t.main} is a consequence  of the much more general Theorem \ref{t.geodriesz} on the geodesic distance Riesz energy on projective spaces, whose proof consists of two major components: positive definiteness/positivity of Jacobi coefficients (Section \ref{sec:Jacobi Expansion of Geod. Riesz}) and approximations arguments   based on Ces\`aro averages of Jacobi series and the spherical analogues of $A_1$ inequalities, which  are designed to take care of the singularities of the kernel (Section \ref{s.singularkernels}). 

\begin{remark}\label{rem.t.main}
We would like to remark that the range $\lambda \in (-d,-d+2]$ in Theorem \ref{t.main} is most likely not sharp. One indication of this is the fact that at the endpoint $\lambda = -d+2$ the uniform measure $\sigma$ is a unique minimizers, and minimizers at phase transitions are usually not unique. 
We conjecture that for all $d\ge 2$ there exists some phase transition value $\lambda_d$ such that the uniform measure  $\sigma$  uniquely optimizes $I_\lambda$ on $\mathbb S^d$ in the range  $-d< \lambda < \lambda_d$, but it does not optimize $I_\lambda$ for $\lambda > \lambda _d$.    In fact, numerical experiments performed by Peter Grabner suggest that $\lambda_2 \ge 0.59$ and $\lambda_3 \ge 0.125$, while it appears that $\lambda_d <0$ for $d\ge 4$. 
\end{remark}

In addition, it is known \cite[Lemma 1.1 and Corollary 3.8]{LimM22} that there exists a value $\lambda^*_d \in [1,2)$ such that $\mu_{ONB}$ uniquely maximizes $I_\lambda$ for $\lambda > \lambda^*$ (in \S\ref{sec.discrete} we partially extend this result to other projective spaces), and the integral Fejes T\'oth conjecture, Conjecture \ref{c.IFT}, essentially states that $\lambda_d^* =1 $. 

Moreover, one can check that $\sigma$ is not a maximizer of $I_1$ for $d\ge 2$, and thus by continuity it doesn't optimize $I_\lambda$ for $\lambda$ close to $1$, see Lemma \ref{l.lambda<1}. Therefore, $\lambda_d <1$. Moreover, $\mu_{ONB}$ is also not an optimizer of $I_\lambda$ for $\lambda <1$, see Proposition \ref{p.notONB}. 

Hence in the range $\lambda_d < \lambda < 1$ neither $\sigma$, nor $\nu_{ONB}$ optimize the energy $I_\lambda$, and the exact nature of optimizers is elusive even on the level of conjectures. In \S\ref{s.<1} we present some observations and numerical experiments which suggest a cascade of  phase transitions with optimal measures of different dimensions.  This drastically differs from the behavior of other similar energies described in \S\ref{s.typical}. For a  visual   reference see Figure \ref{fig:onRP} in \S\ref{s.summary} (notice the parameter used there is $s=-\lambda$).

\subsection{The case $d=1$} The one-dimensional case is special: it is the only case in which optimizers of $I_\lambda$ are completely understood for the whole range $\lambda >-1$. 

\begin{theorem}\label{t.d1}
Let $d=1$, i.e. consider the energy $I_\lambda (\mu)$ on the circle $\mathbb S^1$. 
\begin{enumerate}[(i)]
\item\label{i1} For  $-1 < \lambda \le 0$, the uniform  measure $\sigma$ on  $\mathbb S^1$ is the unique minimizer of $I_\lambda (\mu)$.
\item\label{ii1} For  $0<\lambda <1$, the measure $\sigma$  is the unique maximizer.
\item\label{iii1}  For $\lambda = 1$, maximizers are exactly the orthogonally symmetric measures (see \S\ref{s.d1} for the definition) which include both $\sigma$ and $\mu_{ONB}$. 
\item\label{iv1} For $\lambda >1$,  unique maximizers are measures of the form  $\mu_{ONB}$. 
\end{enumerate}
\end{theorem}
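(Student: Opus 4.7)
The plan is to reduce the entire theorem to the corresponding known result for the geodesic Riesz energy on the circle $\mathbb{S}^1$ (stated in the third bullet of \S\ref{s.typical}, from \cite{BilD}) via a straightforward change of variables. The key observation is that $\mathbb{RP}^1$ equipped with the distance $\theta$ is isometric, up to a global factor of $2$, to the standard circle $\mathbb{S}^1$ with its geodesic distance.

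Represent each line $\ell \in \mathbb{RP}^1$ by an angle $\phi \in [0,\pi)$ via $\ell = \mathrm{span}(\cos\phi, \sin\phi)$. A direct computation from \eqref{e.theta} shows that
\[
\theta(\ell_1, \ell_2) \;=\; \min\bigl(|\phi_1 - \phi_2|, \, \pi - |\phi_1 - \phi_2|\bigr),
\]
which is precisely the geodesic distance on the circle of circumference $\pi$. The doubling map $\Phi\colon \mathbb{RP}^1 \to \mathbb{S}^1$, $\phi \mapsto 2\phi$, is then a bijection satisfying $d_{\mathbb{S}^1}(\Phi(\ell_1),\Phi(\ell_2)) = 2\,\theta(\ell_1, \ell_2)$, where $d_{\mathbb{S}^1}$ denotes the usual geodesic distance on $\mathbb{S}^1$. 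Hence, for every Borel probability measure $\mu$ on $\mathbb{RP}^1$, setting $\tilde\mu := \Phi_{\#}\mu$,
\[
I_\lambda(\mu) \;=\; 2^{-\lambda} \int_{\mathbb{S}^1}\!\int_{\mathbb{S}^1} d_{\mathbb{S}^1}(u,v)^\lambda \, d\tilde\mu(u)\, d\tilde\mu(v),
\]
with the analogous statement at $\lambda = 0$ up to an additive constant $\log 2$. Since $\Phi$ induces a bijection between spaces of Borel probability measures on the two spaces, optimizing $I_\lambda$ on $\mathbb{RP}^1$ is equivalent to optimizing the geodesic Riesz energy on $\mathbb{S}^1$.

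With this reduction, parts (i)--(iv) follow directly from the classification of optimizers of the geodesic Riesz energy on $\mathbb{S}^1$ quoted in \S\ref{s.typical}. The uniform measure $\sigma_{\mathbb{S}^1}$ is the unique optimizer for $-1 < \lambda < 1$ (including the logarithmic case at $\lambda = 0$) and pulls back to $\sigma$ on $\mathbb{RP}^1$, yielding (i) and (ii). For $\lambda > 1$, the unique maximizers on $\mathbb{S}^1$ are the antipodal pairs $\frac{1}{2}(\delta_u + \delta_{-u})$; under $\Phi^{-1}$ these correspond to pairs of mutually orthogonal lines in $\mathbb{R}^2$, i.e.\ to measures of the form $\mu_{ONB}$, giving (iv).

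Finally, at $\lambda = 1$ the maximizers on $\mathbb{S}^1$ are exactly the centrally symmetric measures. The antipodal map $\alpha \mapsto \alpha + \pi$ on $\mathbb{S}^1$ pulls back via $\Phi$ to the involution $\phi \mapsto \phi + \pi/2 \pmod{\pi}$ on $\mathbb{RP}^1$, which sends every line to its orthogonal complement. Therefore the maximizers at $\lambda=1$ on $\mathbb{RP}^1$ are precisely the orthogonally symmetric measures, proving (iii). No step presents a genuine technical obstacle; the argument is essentially bookkeeping of the isometry $\Phi$, and the only point requiring a little care is the dictionary between central symmetry on $\mathbb{S}^1$ and orthogonal symmetry on $\mathbb{RP}^1$.
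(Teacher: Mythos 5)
Your proposal is correct and follows essentially the same route as the paper: \S\ref{s.d1} proves Theorem \ref{t.d1} precisely by exhibiting the isometry $\mathbb{RP}^1\cong\mathbb{S}^1$ (which, as noted in the remark there, ``essentially amounts to doubling the angle'') and then pulling back the classification of geodesic Riesz optimizers on the sphere from \cite{BilD}, with the same dictionary translating central symmetry into orthogonal symmetry and antipodal pairs into $\mu_{ONB}$. The only difference is presentational: the paper sets up the isometry via projection matrices so it works for all $\mathbb{FP}^1$, while you write the doubling map directly for the real case.
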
 

Uniqueness, just as in Theorem \ref{t.main}, is understood up to symmetries. The presence of $\mu_{ONB}$ in  \eqref{iii1} follows from the fact
that the Fejes T\'oth conjecture is solved on the circle \cite{FodVZ,BilM}, and it easily implies 
part \eqref{iv1}, see Lemma \ref{lem:Linear programming for ONB}. A simple computation showing that $I_{1} (\mu_{ONB}) = I_1 (\sigma) $ on $\mathbb S^1$ justifies the  presence of $\sigma$ in  \eqref{iii1}.

While parts \eqref{i1} and \eqref{ii1} can be addressed in the same way as Theorem \ref{t.main} and they are indeed covered by Theorem \ref{t.geodriesz}, all four parts of Theorem \ref{t.d1} (including the full characterization in \eqref{iii1}) can be viewed as a consequence of the characterization of optimizers of the geodesic Riesz energy on the sphere $\mathbb S^d$ \cite{BilD}  together with the  well-known fact that the real projective line  $\mathbb{RP}^1$ is isometric to the circle $\mathbb S^1$. This approach (which has been overlooked in prior work on the Fejes T\'oth conjecture) is taken in \S\ref{s.d1}, see Theorem \ref{t.d1expand}. 


\subsection{Moving to the real projective space $\mathbb{RP}^{d}$.}\label{s.proj}





In fact, as mentioned earlier, 
since the kernel $\theta(x,y) = \arccos | x\cdot y |$ remains unchanged if $x \in \mathbb S^d$ is changed to $-x$, the natural setting for our problem is the real projective space $\mathbb{RP}^{d}$. 

The real projective space can be defined as $\mathbb{RP}^{d} = \big( \mathbb R^{d+1} \setminus \{0\} \big) / \big( \mathbb R \setminus \{0\} \big) $,  i.e. one identifies elements $ x \in \mathbb R^{d+1} \setminus \{0\} $ with $c x$ for $c\neq 0$. This can be thought of as the set of lines in $\mathbb R^{d+1}$ passing through the origin. Alternatively,  $\mathbb{RP}^{d}$ can be associated with the sphere $\mathbb S^d$ in which the elements $x$ and $-x$ are identified. We shall often use this latter convention, sometimes slightly abusing the notation by not distinguishing between an element $x \in \mathbb{RP}^{d}$ and one of its representers in $\mathbb S^d$. 

The geodesic distance $\vartheta$ on $\mathbb{RP}^{d}$, normalized so that the diameter of the space is equal to $\pi$, is defined by setting 
\begin{equation}\label{e.projgeod}
\cos \vartheta (x,y)  = 2 | x\cdot y |^2 -1 ,
\end{equation}
where in the right-hand side we identify elements of $\mathbb{RP}^{d}$ with their representers in $\mathbb S^d$.  

Letting $\eta(x,y) = \arccos (x\cdot y) \in [0,\pi]$ be the geodesic distance  between  $x$ and $y$ on $\mathbb S^d$, we see  that
\begin{equation}\label{e.thetabeta}
\cos \vartheta (x,y)  = 2 | x\cdot y |^2 -1  = \cos  \big( 2 \eta (x,y) \big) ,
\end{equation}
and therefore
\begin{align}\label{e.rhotheta}
\vartheta (x,y ) & = \arccos  \big( \cos  \big( 2 \eta (x,y) \big) \big)  = \min \{ 2 \eta (x,y), 2\pi - 2\eta (x,y) \} \\
\nonumber & = 2 \min \{ \arccos  x\cdot y , \,  \pi - \arccos  x\cdot y \} = 2 \theta (x,y),
\end{align}
as defined in \eqref{e.theta}. In other words, up to a factor of $2$, the kernel $\theta (x,y)$ arising in the Fejes T\'oth conjecture is the geodesic distance $\vartheta (x,y)$ on the real projective space $\mathbb{RP}^{d}$.

We shall adopt this point of view for the rest of the paper and will study the Riesz energy  with respect to the geodesic distance on the real (as well as other) projective spaces.

To be consistent with the potential theory literature, we define the  geodesic Riesz kernel as 
\begin{equation}\label{e.fs1}
 F_s  (\cos (\vartheta(x,y)) ) = \begin{cases}
    \frac{1}{s}\vartheta^{-s} (x,y) & s \neq 0\\
    - \log( \vartheta(x,y) ) & s = 0
    \end{cases}
\end{equation}
Setting $t = \cos \vartheta(x,y)$, since $\vartheta (x,y) \in [0,\pi]$, we obtain the kernel
\begin{equation}\label{e.fs2}
F_s ( t ) = 
\begin{cases}
   \frac1{s}  \big(  \arccos t \big)^{-s} & s \neq 0,\\
    - \log \big( \arccos t \big) & s = 0.
    \end{cases}
\end{equation}
Notice that $s=-\lambda$ compared to the notation of Theorems \ref{t.main} and \ref{t.d1}. 
Observe two advantages of this setup. First, unlike the kernel $\arccos |t|$ associated to $\theta (x,y) $ on $\mathbb S^d$, the kernel $F_s (t)$ arising from the geodesic distance $\vartheta (x,y)$ on $\mathbb{RP}^{d}$ is smooth inside the interval $(-1,1)$, which simplifies some of the computations. Second,
the presence of the factor $\frac{1}{s}$ implies that we shall be interested in minimizing the energy independently of the sign of $s$. 
From now on we shall use this notation (although we shall briefly return to $\lambda = -s$ in \S\ref{s.<1}).

\subsection{Geodesic distance Riesz energy on projective spaces. } Motivated by the above discussion we investigate the minimizers of the geodesic distance Riesz energy on compact connected two-point homogeneous spaces. These spaces are completely characterized \cite{Wang}: they include the sphere $\mathbb S^d$ and the projective spaces $\mathbb{RP}^d$, $\mathbb{CP}^d$, $\mathbb{HP}^d$ ($d\ge 1 $), as well as $ \mathbb{OP}^d$ ($d=1$ or $2$) over the fields of real numbers, complex numbers, quaternions, and octonions, respectively (we shall use the notation $\mathbb{FP}^d$ when we want to collectively refer to these projective spaces). Rather than giving precise definitions of these spaces, we refer the reader to one of the standard sources, e.g. \cite{Helg} (we would also like to point out a recent reference \cite{AndDGMS} where these spaces were studied in the context of energy minimization).  From now on $\Omega$ will always denote one of the spaces described above.\\



In fact, since the geodesic Riesz energy on the sphere has been studied in detail in \cite{BilD},  we  mostly concentrate on the projective spaces (although, the results on the sphere  fall into the same general framework, i.e. our arguments automatically reproduce them). 
While the case of  one-dimensional  projective spaces  is also covered by the results of this section, a more thorough understanding of this case follows  from the known results for the sphere using the isometry of $\mathbb{FP}^1$ and $\mathbb S^{\dim_{\mathbb R} \mathbb F}$, see Section \ref{s.d1}.  Finally, we would like to add that, since $\mathrm{SO}(3)$ is isometric to $\mathbb{RP}^3$, see e.g. \cite[Section 2]{Graf}, the results of this section also apply to this space. In the case $s=-1$ these energies on all projective spaces have been studied in \cite{Gan}, where it was shown that $\sigma$ does not minimize them, see Lemma \ref{l.lambda<1}.  \\

We now state the main  theorem which provides sufficient conditions for  the geodesic Riesz $I_{F_s}$ energies, as defined in \eqref{eq:Energy def}, to be  minimized by the uniform measure $\sigma$.  Let $D = d \cdot {\dim_{\mathbb R} \mathbb F}$ denote the dimension of $\Omega$ as a real manifold.

\begin{theorem}\label{t.geodriesz}
Let $\Omega$ be a compact connected 
 two-point homogeneous space, and let the parameter $s \in \mathbb R$ satisfy the following, depending on $\Omega$:
 \begin{itemize}
\item $\Omega = \mathbb{S}^d$ and $- 1 < s < d$,

\item $\Omega = \mathbb{RP}^d$, $d \geq 2$ and $d -2  \leq s < d$,  or $\Omega = \mathbb{RP}^1$ and $-1 < s < 1$,

\item $\Omega = \mathbb{CP}^d$, $d \geq 2$, and $ 2d-3 \leq s < D =2d $, or $\Omega = \mathbb{CP}^1$ and $-1 < s < 2$,

\item $\Omega = \mathbb{HP}^d$, $d \geq 2$, and $4d - 5 \leq s < D=4d$, or $\Omega = \mathbb{HP}^1$ and $-1 < s < 4$ 

\item $\Omega = \mathbb{OP}^2$ and $7\leq s < D= 16$, or $\Omega = \mathbb{OP}^1$ and $-1 < s  < 8$

\end{itemize}

\noindent Then the uniform measure $\sigma$  {uniquely} 
 minimizes the geodesic distance Riesz energy $I_{F_s} (\mu)$ among  Borel probability measures on $\Omega$. 

In all of the above examples, if $d=1$ (or $\Omega= \mathbb S^d$) and $s=-1$, the uniform measure $\sigma$ minimizes the geodesic Riesz energy, but it is not a unique minimizer. 
\end{theorem}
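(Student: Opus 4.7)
The plan is to follow the classical harmonic-analytic strategy --- pass to the Jacobi expansion of the zonal kernel and prove positive definiteness --- with the twist that the singular nature of $F_s$ near the diagonal forces an approximation argument via Ces\`aro means and $A_1$ inequalities. On a compact connected two-point homogeneous space $\Omega$, every zonal continuous function expands as
\begin{equation*}
f(\cos\vartheta(x,y)) = \sum_{n=0}^{\infty} \widehat{f}(n)\, Z_n(x,y),
\end{equation*}
where $Z_n$ is the reproducing (zonal) kernel of the $n$-th spherical-harmonic space, expressible through the Jacobi polynomial $P_n^{(\alpha,\beta)}$ for explicit exponents $(\alpha,\beta)$ dictated by $\Omega$: namely $((d-2)/2,(d-2)/2)$ for $\mathbb{S}^d$, $((d-2)/2,-1/2)$ for $\mathbb{RP}^d$, $(d-1,0)$ for $\mathbb{CP}^d$, $(2d-1,1)$ for $\mathbb{HP}^d$, and $(7,3)$ for $\mathbb{OP}^2$. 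Since $\iint Z_n\, d\mu\, d\mu \geq 0$ for every $\mu \in \mathcal{P}(\Omega)$ and vanishes for $\mu = \sigma$ when $n \geq 1$, the standard linear-programming principle reduces the theorem to proving $\widehat{F_s}(n) > 0$ for all $n \geq 1$ throughout each stated range of $s$.

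The bulk of Section~\ref{sec:Jacobi Expansion of Geod. Riesz} should be devoted to this positivity. I would begin from
\begin{equation*}
\widehat{F_s}(n) = c_{n,\alpha,\beta} \int_{-1}^{1} F_s(t)\, P_n^{(\alpha,\beta)}(t)\, (1-t)^\alpha (1+t)^\beta\, dt,
\end{equation*}
invoke the Rodrigues formula to transfer derivatives off $P_n^{(\alpha,\beta)}$, and integrate by parts as many times as integrability of $F_s$ at $t=1$ allows. What remains should be an integral of $F_s^{(k)}$ --- a constant-sign function of $t$ after the substitution $t=\cos\vartheta$ --- against a manifestly positive combination of Jacobi weights and Jacobi polynomials of shifted parameters $(\alpha+k,\beta+k)$. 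The lower thresholds $s \geq d-2$, $s \geq 2d-3$, $s \geq 4d-5$, $s \geq 7$ on the respective projective spaces should correspond precisely to the largest $s$ for which sufficiently many integrations by parts survive without picking up a wrong-sign boundary contribution at $t=1$, while the upper bound $s < D$ is simply the integrability of $\vartheta^{-s}$ against $\sigma\otimes\sigma$. Pinpointing the sharp lower range, verifying strict positivity uniformly in $n$ (with whatever ad hoc treatment of the smallest few $n$ turns out to be necessary), and handling the endpoint/logarithmic case $s=0$ is the main obstacle I anticipate.

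Because $F_s$ is genuinely singular at $t=1$ once $s \geq 0$, the expansion cannot be evaluated term-by-term against an arbitrary $\mu$. To legitimize the inequality $I_{F_s}(\mu) \geq I_{F_s}(\sigma)$ I would invoke the general Theorem~\ref{thm:general} of Section~\ref{s.singularkernels}: approximate $F_s$ by Ces\`aro $(C,\gamma)$ means of its Jacobi series for $\gamma$ large enough that the means converge pointwise on $(-1,1)$ and are dominated, uniformly in the truncation level, by an $A_1$-weight comparable to $F_s$ itself. The means are positive-definite continuous kernels (inheriting non-negativity of Jacobi coefficients from Step~1 together with positivity of the Fej\'er--Ces\`aro kernel), so they satisfy the desired inequality against every $\mu$, and the $A_1$ bound plus dominated convergence promote the inequality to $F_s$. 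Uniqueness follows from strict positivity of $\widehat{F_s}(n)$ for $n \geq 1$, forcing every non-trivial spherical-harmonic component of a minimizer to vanish. Finally, the failure of uniqueness at $s=-1$ in dimension one reflects the fact that $\widehat{F_{-1}}(1) = 0$ in that case, so first-degree harmonic perturbations of $\sigma$ are energy-neutral --- consistent with the characterization of geodesic Riesz optima on $\mathbb{S}^1$ that will be exploited in Section~\ref{s.d1}.
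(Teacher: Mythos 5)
Your overall architecture is the same as the paper's: reduce the theorem to strict positivity of the Jacobi coefficients of $F_s$ via the Rodrigues formula and integration by parts, then upgrade from continuous to singular kernels by an approximation scheme (Ces\`aro means plus an $A_1$/maximal-function bound), with uniqueness coming from strict positivity and the positive definiteness of the zonal kernels. However, your sketch of the central positivity step misidentifies the mechanism. In the admissible range $-1<s<D$ the boundary terms in the integration by parts all vanish (Lemma \ref{lem:Fa IBP}); one integrates by parts the full $n$ times, and what remains is $\int_{-1}^1 \big(\tfrac{d^n}{dt^n}F_s(t)\big)(1-t)^{n+\alpha}(1+t)^{n+\beta}\,dt$, where $\tfrac{d^n}{dt^n}F_s$ is a nonnegative combination of terms of the form $t^{c}(\arccos t)^{-s_i}(1-t^2)^{-b}$. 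Contrary to your expectation, this integrand is \emph{not} of constant sign when $\alpha>\beta$: the terms with odd powers $t^c$ change sign, and the lower thresholds $s\ge 2(\alpha-\beta)-1$ come precisely from the sign analysis of the folded integrand at $\pm t$ (Proposition \ref{prop:Positivitiy of odd terms}, using that $(\alpha-\beta)\arccos t\,\sqrt{(1+t)/(1-t)}$ increases to $2(\alpha-\beta)$ as $t\to 1^-$), not from wrong-sign boundary contributions; in addition one needs $s>-1$ so that after differentiation every exponent $s_i\ge s+1$ stays positive. So the part you flag as the anticipated main obstacle is indeed where all the work lies, and the heuristic you give for the ranges would not produce them.

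On the singular-kernel step, your claim that the Ces\`aro means are dominated, uniformly in the truncation level, by an $A_1$ weight comparable to $F_s$ is neither proved nor what the paper does: the paper uses Ces\`aro means only to handle absolutely continuous measures with bounded density (via $L^1$ convergence of $S_n^\delta F_s$), and passes to arbitrary measures through the ball-average approximations $f^{(\varepsilon)}$ and $\mu^{(\varepsilon)}$, for which the $A_1$ inequality $F_s^{(\varepsilon)}\le C F_s$ is a genuine Hardy--Littlewood maximal bound (Proposition \ref{p.max}); since you also invoke Theorem \ref{thm:general}, which encapsulates exactly this and applies because $F_s$ is comparable to a power of the geodesic distance near the diagonal, the unproved uniform domination of Ces\`aro means should simply be dropped from your plan. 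Finally, your explanation of non-uniqueness at $s=-1$ is wrong in detail: on $\mathbb S^1$, and on $\mathbb{FP}^1\cong\mathbb S^{\dim_{\mathbb R}\mathbb F}$, the coefficient $\widehat{F_{-1}}_1$ is strictly positive; it is the even-degree coefficients with $n\ge 2$ that vanish (so degree-one perturbations of $\sigma$ strictly increase the energy), and the paper obtains minimality at $s=-1$ from nonnegativity of the coefficients in the limit $s\to -1^+$ and non-uniqueness from the known characterization of geodesic Riesz minimizers on the sphere (centrally, respectively orthogonally, symmetric measures) transported through the isometry $\mathbb{FP}^1\cong\mathbb S^{\dim_{\mathbb R}\mathbb F}$.
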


Observe that this theorem with $\Omega = \mathbb{RP}^d$ implies Theorem \ref{t.main} and parts \eqref{i1}-\eqref{ii1}, as well as partially  \eqref{iii1}, of Theorem \ref{t.d1}.  The ranges of $s$ for $\Omega = \mathbb{FP}^d$ with $d\ge 2$ is most likely not sharp (see Remark \ref{rem.t.main}). Finding the exact transition value $s_d$ for $ \mathbb{FP}^d$ such that the uniform measure minimizes the energy $I_{F_s}$ for $s_d <s < d$, but not for $s<s_d$ seems to be a very interesting, but difficult open problem.

\subsection{Summary of the differences of Riesz energies on the sphere and on projective spaces}\label{s.summary}

The case $\Omega = \mathbb S^d$ of Theorem \ref{t.geodriesz} recovers the results of \cite{BilD}. Notice that this is the only $\Omega$ in Theorem \ref{t.geodriesz} with $d\ge 2$  in which the minimizing measures are known  for the whole admissible range of exponents $s<d$. Indeed, in addition to the case $-1<s<d$ covered above,  for $s=-1$ the minimizers are all centrally symmetric measures, and for $s<-1$ one obtain discrete measures concentrated equally at two opposite poles $\frac12(\delta_x + \delta_{-x})$, see \cite[Theorem 1.1]{BilD}. As discussed in \S\ref{s.typical}, the minimizers of the Euclidean distance Riesz energy on $\mathbb S^d$ demonstrate the same pattern of behavior with a single phase transition at $s= -2 $    \cite[Theorems 5 and 7]{Bj}, \cite[Theorems 4.6.4 and 4.6.5]{BorHS}. This is summarized in Figure \ref{fig:onRP}. 
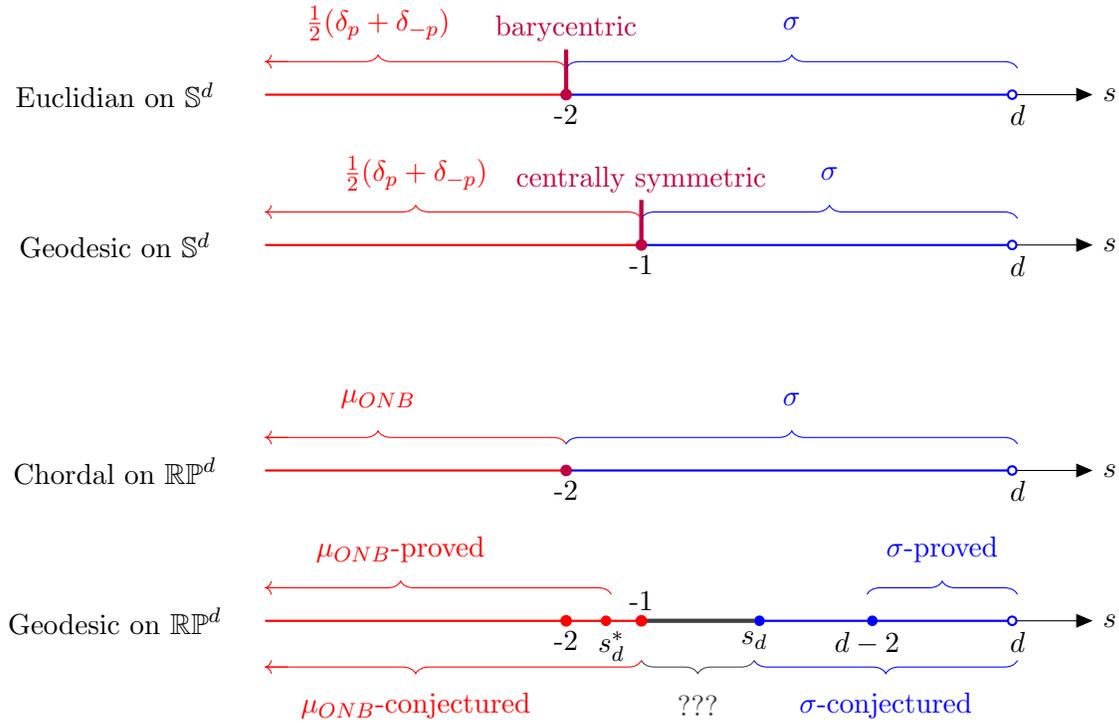
\begin{figure}[h!]
\centering
  \begin{tikzpicture}[
line/.style = {draw,thick}]
\draw[>=triangle 45, ->] (-6,0) -- (5,0) node[right]{$s$};
\draw[blue,line, -{Circle[length=4pt,fill=white]}]  (-2,0) -- (4,0) node[black,below]{$d$};
\draw[red,line, -]  (-6,0) -- (-2,0) node[black,below]{-2} node[yshift=+25pt,purple]{barycentric};
\filldraw[purple] (-2,0) circle (2pt);
\draw[purple,line,ultra thick] (-2,0)--(-2,0.6);
\draw[blue,decorate, decoration = {brace, amplitude = 5pt}, xshift = 0pt, yshift = +10pt] (-2,0)--(4,0) node [blue, midway, xshift = 0cm, yshift = +0.6cm] {$\sigma$};
\begin{scope}
        \clip(5, 4) rectangle (-6, -2);
        \draw [red,decorate,decoration={brace,amplitude=5pt},xshift= 0pt, yshift=+10pt] (-7,0)--(-2,0) node [midway, xshift = 0cm, yshift = +0.6cm] {$\frac{1}{2}(\delta_p+\delta_{-p})$};
\end{scope}
\draw[<-,red] ([yshift=12.5pt]-6, 0) -- ([yshift=12.5pt]-5.7,0);
\node[] at (-8,0) {Euclidian on $\mathbb{S}^d$};

\draw[>=triangle 45, ->] (-6,-2) -- (5,-2) node[right]{$s$};
\draw[blue,line, -{Circle[length=4pt,fill=white]}]  (-1,-2) -- (4,-2) node[black,below]{$d$};
\draw[red,line, -]  (-6,-2) -- (-1,-2) node[black,below]{-1} node[yshift=+25pt,purple]{centrally symmetric};
\filldraw[purple] (-1,-2) circle (2pt);
\draw[purple,line,ultra thick] (-1,-2)--(-1,-1.4);
\draw[blue,decorate, decoration = {brace, amplitude = 5pt}, xshift = 0pt, yshift = +10pt] (-1,-2)--(4,-2) node [blue, midway, xshift = 0cm, yshift = +0.6cm] {$\sigma$};
\begin{scope}
        \clip(5, -4) rectangle (-6, 1);
        \draw [red,decorate,decoration={brace,amplitude=5pt},xshift= 0pt, yshift=+10pt] (-7,-2)--(-1,-2) node [midway, xshift = 0cm, yshift = +0.6cm] {$\frac{1}{2}(\delta_p+\delta_{-p})$};
\end{scope}
\draw[<-,red] ([yshift=12.5pt]-6, -2) -- ([yshift=12.5pt]-5.7,-2);
\node[] at (-8,-2) {Geodesic on $\mathbb{S}^d$};

\draw[>=triangle 45, ->] (-6,-5) -- (5,-5) node[right]{$s$};
\draw[blue,line,-{Circle[length=4pt,fill=white]}]  (-2,-5) -- (4,-5) node[black,below]{$d$};
\draw[red,line, -]  (-6,-5) -- (-2,-5) node[black,below]{-2} node[yshift=+20pt,purple]{};
\filldraw[purple] (-2,-5) circle (2pt);
\draw[blue,decorate, decoration = {brace, amplitude = 5pt}, xshift = 0pt, yshift = +10pt] (-2,-5)--(4,-5) node [blue, midway, xshift = 0cm, yshift = +0.6cm] {$\sigma$};
\begin{scope}
        \clip(5, -1) rectangle (-6, -6);
        \draw [red,decorate,decoration={brace,amplitude=5pt},xshift= 0pt, yshift=+10pt] (-7,-5)--(-2,-5) node [midway, xshift = 0cm, yshift = +0.6cm] {$\mu_{ONB}$};
\end{scope}
\draw[<-,red] ([yshift=12.5pt]-6, -5) -- ([yshift=12.5pt]-5.7,-5);
\node[] at (-8,-5) {Chordal on $\mathbb{RP}^d$};

\draw[>=triangle 45, ->] (-6,-7) -- (5,-7) node[right]{$s$};
\draw[blue,line, {Circle[length=4pt]}-{Circle[length=4pt,fill=white]}]  (2,-7) node[black,below]{$d-2$} -- (4,-7) node[black,below]{$d$};
\draw[blue,line, {Circle[length=4pt]}-] (0.5,-7) node[black,below]{$s_d$} -- (2,-7);
\draw[red,line, -{Circle[length=4pt]}]  (-6,-7) -- (-1.4,-7) node[black,below]{$s_d^*$} node[yshift=+30pt]{};
\draw[red,line] (-1.4,-7)--(-1,-7) node[black,above] {-1};
\draw[darkgray,line,ultra thick] (-1,-7)--(0.5,-7);
\filldraw[red] (-1,-7) circle (2pt);
\filldraw[red] (-2,-7) circle (2pt) node[black,below]{-2};
\draw[blue,decorate, decoration = {brace, amplitude = 5pt}, xshift = 0pt, yshift = +10pt] (2,-7)--(4,-7) node [blue, midway, xshift = 0cm, yshift = +0.6cm] {$\sigma$-proved};
\draw[blue,decorate, decoration = {brace, amplitude = 5pt,mirror}, xshift = 0pt, yshift = -15pt] (0.5,-7)--(4,-7) node [blue, midway, xshift = 0cm, yshift = -0.6cm] {$\sigma$-conjectured};
\draw[darkgray,decorate, decoration = {brace, amplitude = 5pt,mirror}, xshift = 0pt, yshift = -15pt] (-1,-7)--(0.5,-7) node [darkgray, midway, xshift = 0cm, yshift = -0.6cm] {???};
\begin{scope}
        \clip(6, -10) rectangle (-6, -4);
        \draw [red,decorate,decoration={brace,amplitude=5pt},xshift= 0pt, yshift=+10pt] (-7,-7)--(-1.4,-7) node [midway, xshift = 0cm, yshift = +0.6cm] {$\mu_{ONB}$-proved};
        \draw [red,decorate,decoration={brace,amplitude=5pt,mirror},yshift=-15pt] (-7,-7) -- (-1,-7) node[midway,xshift=0cm,yshift=-0.6cm] {$\mu_{ONB}$-conjectured};
\end{scope}
\draw[<-,red] ([yshift=12.5pt]-6, -7) -- ([yshift=12.5pt]-5.7,-7);
\draw[<-,red] ([yshift=-17.5pt]-6, -7) -- ([yshift=-17.5pt]-5.7,-7);
\node[] at (-8,-7) {Geodesic on $\mathbb{RP}^d$};
\end{tikzpicture}
\label{fig.numberlines}
\caption{Minimizers of the Euclidean and geodesic Riesz energies on the sphere $\mathbb S^d$ and real projective space $\mathbb{RP}^d$.}
\label{fig:onRP}
\end{figure}

As we shall see in \S\ref{s.chordal} (Theorem \ref{thm:Chordal Riesz Minimizers}), the  minimizers of the Riesz energy with respect to chordal distance on $\mathbb{FP}^d$  behave in exactly the same way as those of the Euclidean distance Riesz energy on $\mathbb S^d$, i.e. a single phase transition at $s=-2$. It is therefore surprising that the geodesic Riesz energy on $\mathbb{FP}^d$ behaves quite differently from its counterpart on the sphere. In Theorem \ref{t.FTlarge}  we show that there exists $s^*_d \in [-2,-1]$ such that $\mu_{ONB}$ is a unique minimizer  of $I_{F_s}$ for $s<s^*_d$, but not for $s>s^*_d$. Then, just as explained in Remark \ref{rem.t.main} and the discussion thereafter, on the nonempty interval $-1 < s < s_d$ neither $\sigma$ nor $\mu_{ONB}$ minimizes the geodesic distance Riesz energy on $\mathbb{FP}^d$. The structure of minimizers in this range remains mysterious, with numerical results of \S\ref{s.<1} suggesting  further phase transitions for $s>-1$. We illustrate this effect on $\mathbb{RP}^d$ in Figure \ref{fig:onRP}.



\section{Preliminaries: analysis on compact connected two-point homogeneous spaces, Jacobi polynomials}\label{s.prelim}

We briefly discuss relevant analysis on projective spaces, for more details see  \cite{AndDGMS}.  Let $\Omega$ denote $\mathbb S^d$, $\mathbb{RP}^d$, $\mathbb{CP}^d$, $\mathbb{HP}^d$ ($d\ge 1 $), or  $ \mathbb{OP}^d$ ($d=1$ or $2$). Each of these spaces admits a geodesic metric $\vartheta(x,y)$ normalized so that $\operatorname{diam} (\Omega) =\pi$ and a chordal metric, which on the projective spaces is standardly  given by 
$$ \rho(x,y) = \sqrt{\frac{1-\cos \big(\vartheta(x,y)\big)}{2}} = \sin \frac{\vartheta(x,y)}{2},$$
 and on the sphere $\mathbb S^d$ its role is played by the Euclidean distance $ \rho(x,y) = \| x-y\| = 2 \sin \frac{\vartheta(x,y)}{2}$. 

With each space, we associate parameters
\begin{equation}
    \label{eq:JacobiAlphaBeta}
    \alpha = \begin{cases} \frac{d}{2} -1, \\ d \frac{\dim_{\mathbb{R}} (\mathbb{F})}{2}-1, \end{cases}  \; \; \; \;
    \beta = \begin{cases}
        \alpha, & \text{ for } \Omega = \mathbb{S}^{d};\\
        \frac{\dim_{\mathbb{R}}(\mathbb{F})}{2} -1, &
        \text{ for } \Omega = \mathbb{FP}^{d}.
    \end{cases}
\end{equation}
As before, we use $D=\dim_{\mathbb R} (\Omega)= d\cdot \dim_{\mathbb{R}} (\mathbb{F})  = 2\alpha+2$  to denote the dimension of the space $\Omega$ as a real manifold. We also denote
\begin{equation}
d\nu^{(\alpha, \beta)}(t)  :=  \frac1{c_{\alpha,\beta}} (1-t)^{\alpha} (1+t)^{\beta} dt,
\end{equation}
where
\begin{equation}
c_{\alpha, \beta} = 2^{\alpha +\beta +1}B(\alpha+1, \beta+1)
\end{equation}
with $B$ denoting  the beta function. For each space, we will also define $\sigma = \sigma_{\alpha, \beta}$ to be the uniform probability measure on $\Omega$.

We recall that for any function $f:[-1,1] \rightarrow \mathbb R \cup \{ +\infty\}$ which is integrable with respect to the measure $\nu^{(\alpha, \beta)}$, one has the identity 
\begin{equation}\label{eq:Equivalence with 1 dim setting}
\int_{\Omega}  f( \cos( \vartheta(x,y)) ) d\sigma (x) = \int_{-1}^1  f(t) d\nu^{\alpha, \beta}(t)
\end{equation}
for every $y\in \Omega$.

The Jacobi polynomials $P_n^{(\alpha, \beta)} (t)$ are a sequence of polynomials orthogonal with respect to the weight $\nu^{(\alpha, \beta)}$, for $\alpha, \beta > -1$, normalized so that \begin{equation}
P_n^{(\alpha, \beta)}(1) = \binom{n+\alpha}{n}. 
\end{equation}
In the case when $\Omega = \mathbb S^d$, i.e. $\alpha =\beta = \frac{d}{2}-1$, Jacobi polynomials, up to normalization, coincide with the Gegenbauer (ultraspherical) polynomials $C_n^{\alpha + 1/2}$, which arise naturally in the analysis on the sphere.

\begin{remark} We would like to note that due to the identity $$ C_{2n}^{\alpha + 1/2} (t)  = \frac{(\alpha + 1/2)_n}{(1/2)_n}  P_n^{(\alpha,\beta)} (2t^2 -1),$$
 by a change of variables,  the analysis of kernels on $\Omega = \mathbb{RP}^d$  using Jacobi polynomials could be equivalently carried out as analysis of even kernels on $\mathbb S^d$ using Gegenbauer polynomials. While this was our initial approach for the energy $I_\lambda$ defined in \eqref{e.def}, Jacobi polynomials provide a more general framework for all projective spaces.  
 \end{remark}
 
 Every function $ f \in L^2 \big( [-1,1],    \nu^{(\alpha, \beta)} \big)$, can be represented by its Jacobi series 
\begin{equation}\label{eq:Jacobi Expansion}
f (t) = \sum_{n=0}^\infty \widehat{f}_n   P_n^{(\alpha, \beta)} (t) ,
\end{equation}
where the convergence is in the $L^2$ sense. 
The Jacobi  coefficients $ \widehat{f}_n$ are defined for any function $ f \in L^1 \big( [-1,1],    \nu^{(\alpha, \beta)} \big)$ (although the series  \eqref{eq:Jacobi Expansion} doesn't necessarily converge in this case) and are  computed according to the formula
\begin{equation}\label{eq:Jacobi Coefficients}
\widehat{f}_n = \frac{ m_n^{(\alpha, \beta)}}{ \big( P_n^{(\alpha, \beta)}(1)\big)^2} \int_{-1}^1  f(t) P_n^{(\alpha, \beta)} (t) \, d\nu^{(\alpha, \beta)}(t),
\end{equation}
where
\begin{equation}
m_n^{(\alpha,\beta)} := \frac{2n + \alpha + \beta + 1}{\alpha + \beta + 1} \frac{(\alpha + \beta + 1)_n (\alpha + 1)_{n}}{n! (\beta + 1)_n},
\end{equation}
and  $(k)_n$ is the Pochhammer notation for the rising factorial:
\begin{equation}
(k)_n := \prod_{j=0}^{n-1} (k+j).
\end{equation}

Jacobi polynomials are positive definite on their corresponding spaces. More precisely, for any $n \in \mathbb N_0$ and any finite signed Borel measure $\mu$ on $\Omega$
\begin{equation}\label{eq:Jacobi Pos def}
\int_{\Omega} \int_{\Omega} P_n^{(\alpha, \beta)} (\cos(\vartheta(x,y)) \,  d\mu (x) d\mu (y) \ge 0.
\end{equation}
This property can be taken as the definition of positive definiteness ({\it{conditional}} positive definiteness means that the inequality above holds for all signed measures which have total mass zero, and {\it{strict}} positive definiteness means that equality can only hold if $\mu$ is zero on all Borel sets. For general kernels, these definitions involve only measures for which the energy integral is well defined).

Moreover, Jacobi polynomials  span the positive cone of isometry-invariant positive definite continuous kernels on $\Omega$, in other words, such a kernel is positive definite if and only if all of its Jacobi coefficients are nonnegative, see e.g. \cite[Corollary 3.22 and pp. 176--179]{Gan}, 
which in the case of $\mathbb{S}^d$ goes back to Schoenberg \cite{Sch}.

In addition, it is known that, if $f\in C[-1,1]$, i.e. $f$ is continuous,  and  all the Jacobi coefficients of $f$ are nonnegative, then the Jacobi expansion \eqref{eq:Jacobi Expansion}  of $f$  converges to $f$ absolutely and uniformly due to Mercer's Theorem, see \cite[Lemma 2.16]{AndDGMS} for details. 

These properties naturally connect  Jacobi expansions to energy minimization. For a  function $f:[-1,1] \rightarrow \mathbb R\cup \{+\infty\}$, which is bounded below, we define the energy of a Borel  probability measure $\mu$ as 
\begin{equation}\label{eq:Energy def}
I_f (\mu) = \int_{\Omega} \int_{\Omega}  f (\cos(\vartheta(x,y)))  d\mu (x) d\mu (y).
\end{equation}

The properties discussed  above  allow one to characterize those continuous kernels $f$ for which the energy $I_f(\mu)$ is minimized by the uniform measure $\sigma$:

\begin{theorem}\label{thm:uniform measure min, continuous}
Assume that $f\in C[-1,1]$.  The following conditions are equivalent:
\begin{itemize}
\item The energy $I_f (\mu)$ is minimized by the uniform measure $\sigma$, i.e. for every Borel probability measure on $\Omega$ 
$$I_f(\mu) \ge I_f (\sigma).$$
\item The kernel $f(\cos(\vartheta(x,y)))$ is positive definite up to an additive constant on $\Omega$.
\item The kernel $f(\cos(\vartheta(x,y)))$ is conditionally positive definite.
\item The Jacobi coefficients of $f$ are nonnegative, i.e. $\widehat{f}_n \ge 0$, for all $n\ge 1$.
\end{itemize}
The following conditions are also equivalent:
\begin{itemize}
\item The energy $I_f (\mu)$ is uniquely minimized by the uniform measure $\sigma$.
\item The kernel $f(\cos(\vartheta(x,y)))$ is strictly positive definite up to an additive constant on $\Omega$.
\item The kernel $f(\cos(\vartheta(x,y)))$ is conditionally strictly positive definite.
\item The Jacobi coefficients of $f$ are positive, i.e. $\widehat{f}_n > 0$, for all $n\ge 1$.
\end{itemize}
\end{theorem}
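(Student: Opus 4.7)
The plan is to prove the equivalences via a hub-and-spoke argument pivoting on the identity $I_f(\sigma) = \widehat{f}_0$, which follows immediately from \eqref{eq:Equivalence with 1 dim setting} and the invariance of $\sigma$ (so for each fixed $x$, $\int f(\cos \vartheta(x,y))\, d\sigma(y) = \widehat{f}_0$). The equivalence (2) $\Leftrightarrow$ (3) is essentially a rewording of definitions: additive constants contribute $c \cdot \nu(\Omega)^2 = 0$ against any zero-mass signed measure $\nu$, and conversely the splitting $\mu = \mu(\Omega)\sigma + (\mu - \mu(\Omega)\sigma)$ converts conditional positive definiteness of $f$ into unconditional positive definiteness of $f - \widehat{f}_0$.

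The implication (4) $\Rightarrow$ (2) follows from the Mercer-type result already cited (Lemma 2.16 of \cite{AndDGMS}): when $\widehat{f}_n \geq 0$ for all $n \geq 1$, the expansion $f - \widehat{f}_0 = \sum_{n \geq 1} \widehat{f}_n P_n^{(\alpha,\beta)}$ converges uniformly, and since each $P_n^{(\alpha,\beta)}$ is positive definite by \eqref{eq:Jacobi Pos def} and positive definiteness is closed under nonnegative combinations and uniform limits, so is $f - \widehat{f}_0$. Then (3) $\Rightarrow$ (1) is immediate: $\mu - \sigma$ has zero total mass, and conditional positive definiteness together with $\int\int f\, d\mu\, d\sigma = \widehat{f}_0 = I_f(\sigma)$ give
\begin{equation*}
0 \leq \int \int f(\cos \vartheta(x,y))\, d(\mu - \sigma)(x)\, d(\mu - \sigma)(y) = I_f(\mu) - I_f(\sigma).
\end{equation*}

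To close the loop with (1) $\Rightarrow$ (4), I would argue by contradiction: if $\widehat{f}_{n_0} < 0$ for some $n_0 \geq 1$, then fixing $x_0 \in \Omega$, the zonal function $Q(x) = P_{n_0}^{(\alpha,\beta)}(\cos \vartheta(x, x_0))$ is bounded with $\int Q\,d\sigma = 0$, so $d\mu_\epsilon = (1 + \epsilon Q)\, d\sigma$ is a probability measure for small $|\epsilon|$. In the expansion of $I_f(\mu_\epsilon)$, the linear term vanishes by invariance, and a Funk--Hecke type identity on $\Omega$ (convolution of $f(\cos\vartheta)$ with the zonal $Q$ on the $n_0$-th eigenspace returns a multiple of $Q$ proportional to $\widehat{f}_{n_0}$) yields $I_f(\mu_\epsilon) - I_f(\sigma) = \epsilon^2 \widehat{f}_{n_0}\, c_{n_0}$ with $c_{n_0} > 0$, contradicting (1). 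The main obstacle is justifying this Funk--Hecke identity uniformly across all the compact connected two-point homogeneous spaces under consideration and controlling the interchange of summation and integration; the cleanest path uses the addition formula $P_n^{(\alpha,\beta)}(\cos \vartheta(x,y)) = m_n^{-1} \sum_k Y_{n,k}(x) \overline{Y_{n,k}(y)}$ for an orthonormal basis of the $n$-th eigenspace, giving $\int\int P_n(\cos\vartheta)\, d\nu\, d\nu = m_n^{-1} \sum_k |\widehat{\nu}(n,k)|^2 \geq 0$ for every signed measure $\nu$. This same formula delivers the strict versions: if every $\widehat{f}_n > 0$ for $n \geq 1$, then $I_f(\mu) = I_f(\sigma)$ forces every Fourier coefficient $\widehat{\nu}(n,k)$ of $\nu = \mu - \sigma$ with $n \geq 1$ to vanish, hence $\mu = \sigma$; and the strict analogues of (2) and (3) are equivalent to strict positivity of $\widehat{f}_n$ for $n \geq 1$ by the same orthogonality decomposition.
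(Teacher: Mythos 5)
The paper never proves this theorem itself: it is quoted from the literature (Schoenberg, Bochner, \cite[Corollary 2.15, Theorem 2.17]{AndDGMS}, etc.), so your blind reconstruction is really being compared with the standard argument those sources use --- and your architecture is essentially that argument. The pivot $I_f(\sigma)=\widehat{f}_0$, the decomposition $\mu=\mu(\Omega)\sigma+\nu$ with cross terms killed by invariance (giving (2)$\Leftrightarrow$(3) and (3)$\Rightarrow$(1)), positive definiteness of each $P_n^{(\alpha,\beta)}$ via the addition formula, Mercer-type absolute/uniform convergence for (4)$\Rightarrow$(2), and the perturbation $d\mu_\varepsilon=(1+\varepsilon Q)\,d\sigma$ with a Funk--Hecke/convolution identity for (1)$\Rightarrow$(4) are all correct, and the tools you invoke are exactly the ones the paper develops elsewhere for its own needs: the addition formula \eqref{eq:addition}, the convolution identity of Lemma \ref{l.convo}, and \cite[Lemma 2.16]{AndDGMS}. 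The interchange of summation and integration in the quadratic term of $I_f(\mu_\varepsilon)$ is fine as you suggest, since there the measure has bounded density and $L^2(\sigma\times\sigma)$ convergence of the Jacobi expansion suffices.

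Two points need more than a wave. First, in the strict part you pass from $I_f(\mu)=I_f(\sigma)$ to the vanishing of all coefficients of $\nu=\mu-\sigma$ by pairing the Jacobi expansion of $f$ with $\nu\times\nu$ for an arbitrary, possibly singular, Borel measure; $L^2$ convergence does not justify this, but since $f$ is continuous with $\widehat{f}_n\ge 0$ the uniform (Mercer) convergence you already used for (4)$\Rightarrow$(2) does --- this is exactly how the paper argues in the proof of Theorem \ref{thm:sigma a minimizer for epsilon convergence}, so just say so explicitly. Second, and more substantively, the step ``all coefficients $\int Y_{n,k}\,d\nu$ with $n\ge 1$ vanish, hence $\mu=\sigma$'' is the one genuine gap: it asserts that a finite signed measure annihilating every nonconstant eigenfunction is a multiple of $\sigma$. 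On the sphere this is easy via density of polynomials, but on general projective spaces the paper isolates it as a separate statement (Lemma \ref{l.App1} in the Appendix) and proves it by showing $\mu$ and $\sigma$ agree on all geodesic balls and then invoking a nontrivial measure-theoretic fact about measures determined by their values on balls. You need either that lemma or a genuine density-of-eigenfunctions-in-$C(\Omega)$ argument; as written, ``hence $\mu=\sigma$'' is asserted rather than proved, and this same injectivity is also what underlies your claimed equivalence of the strict versions of (2) and (3) with $\widehat{f}_n>0$. Filling in that one step makes the proof complete.
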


This result can be found in various forms in, for instance, \cite[Lemma 2, Theorem 2, Section III]{Boc}, \cite[Corollary 2.15, Theorem 2.17]{AndDGMS}, \cite[Theorems 4.1 \& 4.3]{BilMV}, \cite[Proposition 2.3]{BilGMPV}, \cite{Sch}. \\

As suggested by Theorem \ref{thm:uniform measure min, continuous}, in order to prove our main result, Theorem \ref{t.geodriesz}, for $d\ge 2$, we shall demonstrate  positivity of the Jacobi coefficients of the Riesz kernels $F_s$ for the corresponding ranges of $s\in \mathbb R$.  This will be done in Section  \ref{sec:Jacobi Expansion of Geod. Riesz}. \\

This, however, will not suffice, since Theorem \ref{thm:uniform measure min, continuous} cannot be applied directly. 
Indeed, the conditions of Theorem \ref{thm:uniform measure min, continuous} require continuity of the function $f$.
Generally, when $f$ is  singular (which is the case for the kernels $F_s$ with $s\geq 0$), positivity of Jacobi coefficients alone is not enough to deduce that the energy is minimized by the uniform measure $\sigma$ -- some approximation arguments are required in addition. Indeed, in many similar situations (see e.g. \cite{AndDGMS,BilD,BorHS,DamG}) one removes the singularity in an \emph{ad hoc} fashion by  adding an $\varepsilon > 0$ in the denominator or (almost equivalently) considering a ``shifted'' non-singular version of the kernel $f(t-\varepsilon)$, and then taking the limit as $\varepsilon \rightarrow 0$. Unfortunately, none of these simple approximation methods appear to work in our setting, since  our argument establishing positivity of Jacobi coefficients collapses under  such perturbations. \\

In Section \ref{s.singularkernels} we shall present more elaborate approximation arguments based on two harmonic analysis methods:  Ces\`aro averages of Jacobi expansions and  $A_1$ inequalities for maximal functions. These approximations will allow us to conclude that strict  positivity of Jacobi coefficients for the geodesic Riesz kernels indeed implies that the uniform measure  uniquely minimizes the corresponding Riesz energies, thus proving the main result, Theorem \ref{t.geodriesz}.

Moreover, as a corollary, in Theorem \ref{thm:general} we establish a rather general condition on the singular kernel, under which positivity of Jacobi coefficients implies that the uniform measure  $\sigma$ minimizes the energy. To the best of our knowledge, this is the first general result of this kind as most of the prior work mentioned above concentrated on concrete kernels.

%



\section{Positivity of the  Jacobi coefficients}\label{sec:Jacobi Expansion of Geod. Riesz}
As an important first step towards the proof of Theorem \ref{t.geodriesz}, we begin by establishing the positivity of  the Jacobi coefficients of the projective geodesic Riesz kernel defined in \eqref{e.fs1}--\eqref{e.fs2}.  In the rest of this section we assume that $\alpha$ and $\beta$ take geometrically relevant values as defined in \eqref{eq:JacobiAlphaBeta}, in particular, $\alpha, \beta \ge -\frac12$.

According to \eqref{eq:Jacobi Coefficients}, it suffices to prove the following.

\begin{theorem}\label{thm:Pos Jacobi Coefficients Geod Riesz}
 For all  $n \geq 1$,
\begin{equation}\label{eq:jacCoeffs}
\int_{-1}^1 F_s (t) P_n^{(\alpha, \beta)}(t) (1-t)^{\alpha} (1+t)^{\beta} dt > 0
\end{equation}
if  $2(\alpha - \beta) - 1 \leq  s < D$ and $s>-1$. 
\end{theorem}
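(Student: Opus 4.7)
The plan is to substitute $t=\cos\vartheta$ and reduce \eqref{eq:jacCoeffs} to proving $\sgn(s)\cdot J_n>0$, where (up to a positive constant)
\[
J_n := \int_0^\pi \vartheta^{-s}\,P_n^{(\alpha,\beta)}(\cos\vartheta)\,\sin^{2\alpha+1}(\vartheta/2)\cos^{2\beta+1}(\vartheta/2)\,d\vartheta
\]
(with $-\log\vartheta$ replacing $\vartheta^{-s}/s$ when $s=0$). The principal tool is the trigonometric form of the standard Rodrigues-type derivative identity for Jacobi polynomials:
\[
\frac{d}{d\vartheta}\!\left[\sin^{2\alpha+2}(\vartheta/2)\cos^{2\beta+2}(\vartheta/2)P_{n-1}^{(\alpha+1,\beta+1)}(\cos\vartheta)\right]=n\,\sin^{2\alpha+1}(\vartheta/2)\cos^{2\beta+1}(\vartheta/2)\,P_n^{(\alpha,\beta)}(\cos\vartheta).
\]
A single integration by parts---whose boundary terms vanish thanks to $s<D=2\alpha+2$ at $\vartheta=0$ (since $\sin^{2\alpha+2}(\vartheta/2)\sim\vartheta^{2\alpha+2}$ beats $\vartheta^{-s}$) and $\beta>-1$ at $\vartheta=\pi$---yields
\[
J_n=\frac{s}{n}\int_0^\pi \vartheta^{-s-1}\sin^{2\alpha+2}(\vartheta/2)\cos^{2\beta+2}(\vartheta/2)P_{n-1}^{(\alpha+1,\beta+1)}(\cos\vartheta)\,d\vartheta.
\]

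The heart of the argument is to iterate this procedure $n-1$ more times, lowering the Jacobi index to zero while shifting the parameters by $(+1,+1)$ and raising the exponent of $\vartheta$ at each stage. Each iteration produces a one-power mismatch between the weight $\sin^{2\alpha+2k}\cos^{2\beta+2k}$ appearing in the integrand and the weight $\sin^{2\alpha+2k+1}\cos^{2\beta+2k+1}$ required by the next lowering identity; this will be resolved by absorbing an extra factor of $2/\sin\vartheta$ into the $\vartheta^{-s-k}$ prefactor, whose derivative produces both a \emph{principal} term of the same structural form (primed for the next iteration) and a \emph{correction} term proportional to $\cos\vartheta/\sin^{2}\vartheta$. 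After $n$ iterations $P_0^{(\alpha+n,\beta+n)}\equiv 1$ and $J_n$ is expressed as a finite linear combination of explicit Beta-type integrals $\int_0^\pi \vartheta^{-s-n+j}\sin^{A_j}(\vartheta/2)\cos^{B_j}(\vartheta/2)\,d\vartheta$, each multiplied by a Pochhammer-symbol constant in $s$, $n$, and $\alpha-\beta$.

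The three hypotheses of the theorem each play a distinct role in this scheme: $s<D$ controls the boundary terms and the convergence of the residual integrals; $s>-1$ is required for the original $J_n$ itself to converge absolutely at the origin; and the geometry-specific lower bound $s\ge 2(\alpha-\beta)-1$ is precisely the condition ensuring that the Pochhammer factor $(s+1-2(\alpha-\beta))_k$ produced by the correction terms remains nonnegative throughout the recursion (on the sphere $\alpha=\beta$ and the last condition collapses to $s\ge -1$, matching the stated range). The main obstacle will be the combinatorial bookkeeping of the correction terms, which proliferate with each iteration; taming them will likely require encoding the intermediate integrals in a compact two-parameter family of auxiliary quantities whose signs can be tracked uniformly in $k\le n$. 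An alternative route, which I would pursue as a sanity check, uses the Laplace representation $\vartheta^{-s}=\Gamma(s)^{-1}\int_0^\infty u^{s-1}e^{-u\vartheta}\,du$ to reduce positivity of the Jacobi coefficient of $\vartheta^{-s}$ to positivity of the coefficient of $e^{-u\vartheta}$ for each $u>0$, after which the sign question becomes one about an explicit hypergeometric expression. The remaining cases $s\le 0$ (including the logarithmic kernel $s=0$) will follow from the same framework after adjusting the sign convention, since derivatives of $-\log\vartheta$ and $\vartheta^{|s|}/(-|s|)$ slot into the same recursive structure.
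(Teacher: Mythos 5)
Your setup (the substitution $t=\cos\vartheta$, the lowering identity, and the first integration by parts with boundary terms controlled by $s<D=2\alpha+2$ and $\beta>-1$) is sound and runs parallel to the paper's argument, which instead uses the Rodrigues formula to transfer all $n$ derivatives onto $F_s$ at once. But the heart of the theorem is left as a conjecture in your plan, and the conjecture is not supported. After the first step your iteration produces correction terms proportional to $\vartheta^{-s-k}\cos\vartheta/\sin^{2}\vartheta$ times the weight, and $\cos\vartheta$ \emph{changes sign} on $(0,\pi)$; so the final expression for $J_n$ is not a combination of manifestly positive integrals, and positivity cannot be settled by tracking the signs of explicit constants alone. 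Your guess that the hypothesis $s\ge 2(\alpha-\beta)-1$ will surface as nonnegativity of a Pochhammer factor $(s+1-2(\alpha-\beta))_k$ is reverse-engineered from the statement rather than derived; there is no argument that the correction terms combine into such a factor, and in the paper's proof the condition enters through an entirely different, analytic mechanism. Concretely: writing $\frac{d^n}{dt^n}F_s$ as a positive combination of terms $t^{c_i}(\arccos t)^{-s_i}(1-t^2)^{-b_i}$ with $s_i\ge s+1$, the only problematic contributions are those with odd $c_i$ (your $\cos\vartheta$ terms); the paper folds each such integral over $t\mapsto -t$ and proves the pointwise inequality
\begin{equation*}
\frac{(1-t)^{\alpha-\beta}}{(\arccos t)^{s_i}}-\frac{(1+t)^{\alpha-\beta}}{(\arccos(-t))^{s_i}}>0 \quad\text{on }(0,1),
\end{equation*}
which holds precisely when $s_i\ge 2(\alpha-\beta)$, via a monotonicity argument and the limit $(\alpha-\beta)\arccos t\,\sqrt{(1+t)/(1-t)}\to 2(\alpha-\beta)$ as $t\to 1^-$. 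This folding-plus-pointwise-comparison step is the missing idea in your proposal; without it (or a genuine substitute), the "combinatorial bookkeeping" you defer cannot close the argument.

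Two further cautions. First, your fallback via the Laplace representation $\vartheta^{-s}=\Gamma(s)^{-1}\int_0^\infty u^{s-1}e^{-u\vartheta}\,du$ cannot work on $\mathbb{FP}^d$ with $d\ge 2$: positivity of the Jacobi coefficients of $e^{-u\vartheta}$ for all $u>0$ would force $-\vartheta$ to be conditionally positive definite (Schoenberg), which fails on these spaces (this is exactly the content of Lemma \ref{l.lambda<1} and the Gangolli computation cited there), so the pointwise-in-$u$ reduction is false even though the conclusion for the stated range of $s$ is true. Second, when iterating you must also re-justify integrability and the vanishing of boundary terms at every stage, since the exponent of $\vartheta$ decreases by one per step while the half-angle weights only gain powers; this is where the paper needs the full strength of its Lemma \ref{lem:Fa IBP} (using $s>-1$ so that all intermediate exponents $s_i\ge s+1$ stay positive), and your outline does not address it beyond the first step.
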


\begin{remark}For each of the compact connected two-point homogeneous spaces, this result translates exactly to the bounds stated in Theorem \ref{t.geodriesz}:
\begin{itemize}
\item $\Omega = \mathbb{S}^d$ and $- 1 < s < d$,

\item $\Omega = \mathbb{RP}^d$, $d \geq 2$ and $d -2  \leq s < d$,  or $\Omega = \mathbb{RP}^1$ and $-1 < s < 1$,

\item $\Omega = \mathbb{CP}^d$, $d \geq 2$, and $ 2d-3 \leq s < D =2d $, or $\Omega = \mathbb{CP}^1$ and $-1 < s < 2$,

\item $\Omega = \mathbb{HP}^d$, $d \geq 2$, and $4d - 5 \leq s < D=4d$, or $\Omega = \mathbb{HP}^1$ and $-1 < s < 4$ 

\item $\Omega = \mathbb{OP}^2$ and $7\leq s < D= 16$, or $\Omega = \mathbb{OP}^1$ and $-1 < s  < 8$

\end{itemize}
We note that  restriction $d -2  \leq s < d$ arising in in the real projective spaces is responsible for the range of validity of Theorem \ref{t.main}.
\end{remark}

The nature of the condition $s\geq  2(\alpha - \beta) - 1 $ in Theorem \ref{thm:Pos Jacobi Coefficients Geod Riesz}
  will become clear in the proof of Proposition \ref{prop:Positivitiy of odd terms}, while the condition $s>-1$ arises in Proposition \ref{prop:FDerivatives} and Lemma \ref{lem:Fa IBP}.  Observe that the latter condition forces the interval of admissible values of $s$ to be open in the case of $\Omega = \mathbb S^d$ or $\Omega = \mathbb{FP}^1$. 
  
 \begin{remark}\label{rem.d=1 nonnegative}
 While the value $s=-1$ is excluded in Theorem \ref{thm:Pos Jacobi Coefficients Geod Riesz} in the cases $\Omega = \mathbb S^d$ or $\Omega = \mathbb{FP}^1$, since the functions $F_s$ are continuous for $s<0$ and converge uniformly to $F_{-1}$ as $s\rightarrow -1$ from the right, it is easy to see from \eqref{eq:jacCoeffs} that the Jacobi coefficients are non-negative in this case. 
 \end{remark}

  \begin{remark} The case $\Omega = \mathbb S^d$ has been essentially handled in \cite{BilD}.  Observe that this is is the only example in the class of compact connected two-point homogeneous spaces with $d>1$, where non-negativity of the Jacobi coefficients of the geodesic Riesz kernel extends all the way to $s=-1$, thus leading to a single phase transition at this value. 

\end{remark}

Before we prove Theorem \ref{thm:Pos Jacobi Coefficients Geod Riesz}, we introduce a series of auxiliary  statements.
For $s> - 1$, and $ b, c \ge 0 $,  let us define
\begin{equation}
F_{s,b,c}(t) \coloneqq \begin{cases}
\frac{t^c(\arccos t )^{-s}}{(1-t^2)^b} & s>0,\\
-\frac{t^c \log(\arccos t)}{(1-t^2)^b} & s = 0, \\
-\frac{t^c(\arccos t )^{-s}}{(1-t^2)^b} &-1<s<0.
\end{cases}
\end{equation}
Then $F_{s,0,0}  (t) = s F_s (t) $ for $ s> 0$, $F_{0,0,0}  (t) = F_0 (t)$, and $F_{s,0,0} = -s F_s(t)$ for $-1<s<0$.  We first show that the   derivatives of $F_s$ can be represented as positive linear combinations of the functions defined above. 

\begin{proposition}\label{prop:FDerivatives}
For all $k \geq 0$ and $s>-1$, we have 
\begin{equation}\label{eq:FkthDerivative}
\frac{d^k}{dt^k}F_{s}(t) = \sum_{i=1}^{3^k} C_i F_{s_i,b_i,c_i}(t),
\end{equation}
where for all $i=1,\dots, 3^k$, $C_i \geq 0$, $c_i \in \mathbb{N}_0$, $0 \leq b_i \leq k- \frac{1}{2}$, and $b_i + \frac{s_i}{2} \leq k+ \frac{s}{2}$. Additionally, when $k \geq 1$, $s_i \geq s + 1$.
\end{proposition}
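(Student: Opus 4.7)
The plan is induction on $k$, with a direct base case at $k=1$ and a single three-term differentiation recurrence driving the inductive step. For the base case, a one-line chain-rule computation that is valid uniformly across the three definitional branches of $F_s$ (namely $s>0$, $s=0$, and $-1<s<0$) gives
$$F_s'(t) \;=\; \frac{(\arccos t)^{-(s+1)}}{\sqrt{1-t^2}}.$$
The hypothesis $s>-1$ enters here in an essential way: it forces $s+1>0$, so the right-hand side is exactly $F_{s+1,\,1/2,\,0}(t)$ as defined under the $s>0$ branch, with coefficient $+1$. All four bookkeeping constraints are then immediate at $k=1$: $C_1=1$, $c_1=0$, $b_1 = 1/2 = k-1/2$, $b_1 + s_1/2 = 1+s/2 = k+s/2$, and $s_1 = s+1$.

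For the inductive step, the crucial observation is that after the first differentiation every exponent $s_i$ that can ever appear satisfies $s_i \geq s+1 > 0$, so only the $s>0$ definition of $F_{s,b,c}$ ever has to be differentiated. A routine product-rule computation applied to $t^c (\arccos t)^{-s}(1-t^2)^{-b}$ then yields the recurrence
\begin{equation*}
\frac{d}{dt} F_{s,b,c}(t) \;=\; c\, F_{s,\,b,\,c-1}(t) \;+\; s\, F_{s+1,\,b+1/2,\,c}(t) \;+\; 2b\, F_{s,\,b+1,\,c+1}(t),
\end{equation*}
with the first term absent when $c=0$ and all three coefficients non-negative. Applying this term-by-term to the (at most) $3^k$ summands furnished by the inductive hypothesis produces at most $3^{k+1}$ terms of the required form. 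Tracking the three possible parameter updates $(s,b,c) \mapsto (s,b,c-1)$, $(s+1,\,b+1/2,\,c)$, $(s,\,b+1,\,c+1)$, the bounds $c_i\in\mathbb{N}_0$, $b_i\leq k-1/2$, $b_i+s_i/2 \leq k+s/2$, and $s_i\geq s+1$ all propagate automatically when $k$ is replaced by $k+1$, since each update increases $b$ by at most $1$ and $b+s/2$ by at most $1$, matching the one-unit increase of $k$.

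The entire argument is essentially bookkeeping, and I do not expect any serious conceptual obstacle. The two subtle points worth flagging are: first, the hypothesis $s>-1$ is used exactly once, in the base case, to guarantee that a single differentiation already lands in the $s+1>0$ regime and hence in a setting where the recurrence has non-negative coefficients; and second, the clause $s_i \geq s+1$ is asserted only for $k\geq 1$ precisely because this is the lower bound that makes the middle coefficient $s$ in the recurrence non-negative at every subsequent level, so the induction cannot be started from an earlier stage in which some $s_i \leq 0$ persists.
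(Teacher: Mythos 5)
Your proposal is correct and follows essentially the same route as the paper: the same base-case computation $\frac{d}{dt}F_s = F_{s+1,1/2,0}$ (using $s>-1$ so that $s+1>0$), then induction from $k=1$ via the identical three-term recurrence $\frac{d}{dt}F_{\gamma,b,c} = \gamma F_{\gamma+1,b+1/2,c} + 2b F_{\gamma,b+1,c+1} + c F_{\gamma,b,c-1}$ with the same bookkeeping of how $b$, $c$, and $b+\gamma/2$ evolve. The only trivial omission is the $k=0$ case, which is immediate since $F_s$ is a nonnegative multiple of $F_{s,0,0}$, exactly as noted in the paper.
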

Note that the proof also implies that for each $k \in \mathbb{N}$ there are some $i, j \in \{ 1, ..., 3^k\}$ such that $s_i = s+1$ and $b_j + \frac{s_j}{2} = k+ \frac{s}{2}$, so these bounds cannot be improved.
\begin{proof}
Since $\frac{d}{dt}F_{s}(t) =   F_{s+1,1/2,0}$  for $s \neq 0$   
and $\frac{d}{dt} F_0(t) = F_{1,1/2,0}$, the statement  is true for $k=0$ and $k=1$. We then use induction with the base case $k=1$, where, since $s > -1$,  the indices $s_i$ are all going to be positive. The facts that  $C_i \geq 0$, $s_i \geq s + 1$, $0 \leq b_i \leq k- \frac{1}{2}$ and $c_i \in \mathbb{N}_0$ follow easily from
\begin{equation}\label{eq:FDerivative}
\frac{d}{dt}F_{\gamma,b,c}(t) = \gamma F_{\gamma+1,b+1/2,c} + 2b F_{\gamma,b+1,c+1} + c F_{\gamma,b,c-1} \qquad \gamma, b,c \geq 0.
\end{equation} 
Finally,  we notice that the quantity $b + \frac{\gamma}{2}$ increases by at most $1$ in each term of the derivative, thus we obtain $b_i + \frac{s_i}{2} \leq k+ \frac{s}{2}$.
\end{proof}

Using the Rodrigues formula for Jacobi polynomials
\begin{equation}\label{eq:Rodrigues}
P_n^{(\alpha, \beta)}(t) (1-t)^{\alpha}(1+t)^{\beta} =  \frac{(-1)^n}{2^n n!}\frac{d^n}{dt^n} \left((1-t)^{n+\alpha}(1+t)^{n+\beta}\right),
\end{equation}
we can rewrite the integral  \eqref{eq:jacCoeffs} as 
\begin{equation}\label{eq:RodriguesIntegral}
\int_{-1}^1 F_s(t) \frac{(-1)^n}{2^n n!}\frac{d^n}{dt^n} \left((1-t)^{n+\alpha}(1+t)^{n+ \beta}\right) dt.
\end{equation}
The following lemma sets up the integration by parts for  this integral. We note that this lemma does not hold in the case $s \leq -1$.

\begin{lemma}\label{lem:Fa IBP}
Let $-1< s< D$ and $n \in \mathbb{N}$. Then for $0 \le k \le n -1 $, we have 
\begin{equation}\label{eq:Fa IBP endpoints zeros}
\left(\frac{d^k}{dt^k} F_s(t)\right)\left(\frac{d^{n-k-1}}{dt^{n-k-1}}\left((1-t)^{n+\alpha}(1+t)^{n+\beta}\right)\right) \Big|_{-1}^1 = 0,
\end{equation}
and for $0\le k \le n$
\begin{equation}\label{eq:Fa IBP Integrable claim}
\left(\frac{d^k}{dt^k} F_s(t)\right)\left(\frac{d^{n-k}}{dt^{n-k}}\left((1-t)^{n+\alpha}(1+t)^{n+\beta}\right)\right)
\end{equation}
is integrable. 
\end{lemma}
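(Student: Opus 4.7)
My plan is a direct power-counting argument at the two endpoints $t=\pm 1$, combining the decomposition of $F_s^{(k)}$ from Proposition~\ref{prop:FDerivatives} with the Leibniz expansion of the Rodrigues weight. Near $t=1$ one has $\arccos t \sim \sqrt{2(1-t)}$, so each summand $F_{s_i,b_i,c_i}$ appearing in $F_s^{(k)}$ is $O((1-t)^{-b_i - s_i/2})$; together with the bound $b_i + s_i/2 \le k + s/2$, this gives $F_s^{(k)}(t) = O((1-t)^{-k-s/2})$ as $t\to 1^-$. Near $t=-1$, $\arccos t$ stays bounded away from zero, so the only singular factor is $(1+t)^{-b_i}$; the bound $b_i \le k-1/2$ (valid for $k\ge 1$) then yields $F_s^{(k)}(t) = O((1+t)^{-k+1/2})$ as $t\to -1^+$. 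The case $k=0$ I would handle directly from the definition: $F_s(t) = O((1-t)^{-s/2})$ near $t=1$ and is bounded near $t=-1$. The logarithmic case $s=0$ introduces only a harmless $|\log(1-t)|$ factor, absorbed by any power $(1-t)^\varepsilon$.

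By the Leibniz rule applied to $(1-t)^{n+\alpha}(1+t)^{n+\beta}$, the $j$-th derivative is a sum of terms of the form $(1-t)^{n+\alpha-p}(1+t)^{n+\beta-q}$ with $p+q=j$, $0\le p\le j$. Hence this derivative is $O((1-t)^{n+\alpha-j})$ near $t=1$ and $O((1+t)^{n+\beta-j})$ near $t=-1$.

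Assembling these estimates for the boundary-term claim \eqref{eq:Fa IBP endpoints zeros} with $j = n-k-1$: at $t\to 1^-$, the product is $O((1-t)^{\alpha+1-s/2})$, which vanishes because the assumption $s<D=2\alpha+2$ forces $\alpha+1-s/2 > 0$. At $t\to -1^+$, for $k\ge 1$ the product is $O((1+t)^{\beta+3/2})$, which vanishes since $\beta\ge -1/2$; for $k=0$, $F_s$ is bounded while the weight-derivative contributes $(1+t)^{\beta+1}$, which vanishes since $\beta>-1$. For the integrability claim \eqref{eq:Fa IBP Integrable claim} with $j=n-k$: at $t\to 1^-$, the integrand is $O((1-t)^{\alpha-s/2})$, locally integrable precisely because $s<D$; at $t\to -1^+$, the integrand is $O((1+t)^{\beta+1/2})$ for $k\ge 1$ (bounded since $\beta\ge -1/2$) and $O((1+t)^\beta)$ for $k=0$ (locally integrable since $\beta>-1$).

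The main obstacle is simply careful bookkeeping: one must treat the edge cases $k=0$ and $k=n$ separately from the generic range $1\le k\le n-1$, and verify that the hypothesis $s<D$ is exactly what makes $\alpha+1-s/2>0$. The hypothesis $s>-1$ plays no direct role in the present lemma beyond licensing the use of Proposition~\ref{prop:FDerivatives}, which underpins all of the derivative bounds. No finer cancellation arguments are needed — the worst-case power-counting already gives integrable (indeed vanishing) exponents at both endpoints.
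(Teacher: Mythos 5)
Your proposal is correct and takes essentially the same route as the paper's proof: decompose $\frac{d^k}{dt^k}F_s$ via Proposition~\ref{prop:FDerivatives}, use $\arccos t \ge \sqrt{2-2t}$ to obtain the endpoint bound of order $(1-t)^{-k-s/2}(1+t)^{-k+1/2}$, bound the derivatives of the weight $(1-t)^{n+\alpha}(1+t)^{n+\beta}$ by a Leibniz expansion, and conclude by power counting using $s<D$ (i.e.\ $\alpha>\frac{s}{2}-1$) together with $\beta\ge-\frac12$ (and $\beta>-1$ in the $k=0$ case), treating $k=0$ and the logarithmic kernel separately exactly as the paper does. The only slight difference in bookkeeping is that the paper invokes $s>-1$ inside the derivative-bound chain to guarantee the exponents $s_i$ are positive, but since that is precisely the output of Proposition~\ref{prop:FDerivatives}, your accounting of the role of $s>-1$ matches the paper's.
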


\begin{proof}

We note that for any $n \in \mathbb{N}_0$ and $\ell \in \{ 0, ..., n\}$, there exist nonnegative constants $D_0, ..., D_{n- \ell}$ such that 
\begin{equation}\label{eq:Fa IBP Derivative of weights}
\left| \frac{d^{n-\ell}}{dt^{n- \ell}} \left((1-t)^{n+\alpha}(1+t)^{n+\beta}\right) \right| \leq \sum_{j=0}^{n-\ell} D_j (1-t)^{n+ \alpha - j} (1+t)^{\beta + \ell + j}.
\end{equation}

In the case that $-1 < s < 0$ (when the kernel $F_s$ is bounded)  and $k = 0$, we have that
\begin{align*}
\left| F_s(t) \frac{d^{n-1}}{dt^{n-1}}\left((1-t)^{n+\alpha}(1+t)^{n+\beta} \right) \right| & \leq \frac{\pi^{-s}}{|s|}  \sum_{j=0}^{n-1} D_j (1-t)^{n+ \alpha - j} (1+t)^{\beta + 1 + j}
\end{align*}
and
\begin{align*}
\left| F_s(t) \frac{d^{n}}{dt^{n}}\left((1-t)^{n+\alpha}(1+t)^{n+\beta} \right) \right| & \leq \frac{\pi^{s}}{|s|}   \sum_{j=0}^{n} D_j (1-t)^{n+ \alpha - j} (1+t)^{\beta + 1 + j}
\end{align*}
and the claim follows from the fact that $\alpha, \beta > -1$.

We next handle the case $s = 0$, $k = 0$. On $[-1, \cos(1)]$, $|F_0(t)| \leq \log \pi$, and since
\begin{equation}\label{eq:Fa IBP bound on arccos reciprocal}
\arccos t \ge \sqrt{2-2t}
\end{equation}
for $t\in [-1, 1]$ (the easiest way to see it is to observe that the geodesic distance on $\mathbb S^1$ is greater than Euclidean), we have that
\begin{equation*}
\left| F_0(t) \right| \leq \log \frac{1}{\sqrt{1-t}}  = - \frac{1}{2} \log(1-t) 
\end{equation*}
on $[\cos(1), 1)$. Combining these bounds with \eqref{eq:Fa IBP Derivative of weights} , we have
\begin{align*}
\left| F_0(t) \frac{d^{n-1}}{dt^{n-1}}\left((1-t)^{n+\alpha}(1+t)^{n+\beta} \right) \right| & = | F_0(t)|  \sum_{j=0}^{n-1} D_j (1-t)^{n+ \alpha - j} (1+t)^{\beta + 1 + j}\\
& \leq \begin{cases}
C_1 (1-t)^{n + \alpha} (1+t)^{\beta +1} & t \in (-1, \cos(1)] \\
C_2 \big| \log(1-t) \big| (1-t)^{\alpha + 1} (1+t)^{n+\beta} & t \in [\cos(1), 1) 
\end{cases}
\end{align*}
and
\begin{align*}
\left| F_0(t) \frac{d^{n}}{dt^{n}}\left((1-t)^{n+\alpha}(1+t)^{n+\beta} \right) \right| & = | F_0(t)|  \sum_{j=0}^{n} D_j (1-t)^{n+ \alpha - j} (1+t)^{\beta + 1 + j}\\
& \leq \begin{cases}
C_1 (1-t)^{n + \alpha} (1+t)^{\beta } & t \in (-1, \cos(1)] \\
C_2 \big| \log(1-t) \big|(1-t)^{\alpha} (1+t)^{n+\beta} & t \in [\cos(1), 1) 
\end{cases}.
\end{align*}
Our claim now follows from the fact that $\alpha, \beta \geq -\frac{1}{2}$.

For the remaining cases ($ s>  0$ or $k \geq 1$) we find, using Proposition \ref{prop:FDerivatives} together with \eqref{eq:Fa IBP bound on arccos reciprocal}, that there exist nonnegative constants $A_1, ..., A_{3^k}, B_1, .., B_{3^k}$ and $C > 0$ such that 
\begin{align*}
\left| \frac{d^k}{dt^k} F_{s}(t)\right| & \leq \sum_{i=1}^{3^k} C_i \frac{|t|^{c_i} (\arccos t )^{-s_i}}{(1-t^2)^{b_i}}\\
& \leq \sum_{i=1}^{3^k} B_i ( 1-t)^{- (b_i + \frac{s_i}{2})} (1+t)^{- b_i}\\
& \leq \sum_{i=1}^{3^k} A_i ( 1-t)^{-k - \frac{s}{2}} (1+t)^{- k + \frac{1}{2}}\\
& \leq C ( 1-t)^{-k  - \frac{s}{2}} (1+t)^{- k + \frac{1}{2}} \numberthis \label{eq:Fa IBP derivative bound}
\end{align*}
on $(-1,1)$. Recall that the condition that $s>-1$ guarantees that all $s_i \ge s+1$ are positive. 

Applying \eqref{eq:Fa IBP Derivative of weights}, we have, for $0 \leq k \leq n-1$,
\begin{equation*}
\left|\left(\frac{d^k}{dt^k} F_s (t)\right)\left(\frac{d^{n-k-1}}{dt^{n-k-1}}\left((1-t)^{n+\alpha}(1+t)^{n+ \beta}\right)\right) \right| \leq \sum_{j=0}^{n-k-1} G_j (1-t)^{n + \alpha - j  - k - \frac{s}{2}}  (1+t)^{\beta + j + \frac{3}{2}},
\end{equation*}
proving \eqref{eq:Fa IBP endpoints zeros}, since  $\alpha = \frac{D}{2} - 1 > \frac{s}{2} - 1$ and $\beta \geq - \frac{1}{2}$. 

Similarly, for $0 \leq k \leq n$, we have
\begin{equation*}
\left|\left(\frac{d^k}{dt^k} F_s (t)\right)\left(\frac{d^{n-k}}{dt^{n-k}}\left((1-t)^{n+\alpha}(1+t)^{n+ \beta}\right)\right) \right| \leq \sum_{j=0}^{n-k} G_j (1-t)^{n + \alpha - j  - k - \frac{s}{2}}  (1+t)^{\beta + j + \frac{1}{2}},
\end{equation*}
and once again using the facts that  $\alpha = \frac{D}{2} - 1 > \frac{s}{2} - 1$ and $\beta \geq - \frac{1}{2}$, we find that \eqref{eq:Fa IBP Integrable claim} is indeed integrable.
\end{proof}

This allows us to compute the integral \eqref{eq:RodriguesIntegral} using  integration by parts, for $n \geq 1$,
\begin{align*}
\int_{-1}^1 F_s (t) \frac{(-1)^n}{2^n n!}\frac{d^n}{dt^n} \left((1-t)^{n + \alpha}(1+t)^{n + \beta}\right) dt &=  \frac{(-1)^{n-1}}{2^n n!}\int_{-1}^1 \left( \frac{d}{dt}F_s(t) \right) \frac{d^{n-1}}{dt^{n-1}}\left((1-t)^{n + \alpha}(1+t)^{n + \beta}\right) dt\\
&= \frac{1}{2^n n!}\int_{-1}^1 \left(\frac{d^n}{dt^n} F_s(t)\right) (1-t)^{n + \alpha}(1+t)^{n + \beta} dt\\
&= \frac{1}{2^n n!}\int_{-1}^1 \sum_{i=1}^{3^n} C_i F_{s_i,b_i,c_i}(t) (1-t)^{n + \alpha}(1+t)^{n + \beta} dt\\
&=\frac{1}{2^n n!} \sum_{i=1}^{3^n} C_i \int_{-1}^1  \frac{t^{c_i} (1-t^2)^{n-b_i + \beta}(1-t)^{\alpha - \beta}}{(\arccos t)^{s_i}} dt\\
\end{align*}
We now show that for all $i= 1, \dots, 3^k$, the integral 
$$\int_{-1}^1   \frac{t^{c_i} (1-t^2)^{n-b_i + \beta}(1-t)^{\alpha - \beta}}{(\arccos t)^{s_i}} dt$$
is positive. If $c_i$ is even, then the integrand is positive on $(-1,1) \setminus \{0\}$, hence the integral is positive. Otherwise, we rewrite the integral as
\begin{equation*}
\int_{-1}^1  \frac{t^{c_i} (1-t^2)^{n-b_i + \beta}(1-t)^{\alpha - \beta}}{(\arccos t)^{s_i}} dt = \int_{0}^1  t^{c_i} (1-t^2)^{n-b_i + \beta}
\left(\frac{(1-t)^{\alpha - \beta}}{(\arccos t)^{s_i}}-\frac{(1+t)^{\alpha - \beta}}{(\arccos (-t))^{s_i}}\right) dt
\end{equation*}
and show that the integrand is positive. 

\begin{proposition}\label{prop:Positivitiy of odd terms}
For $s_i \ge 2(\alpha - \beta)$ the expression
\begin{equation*}
p(t) \coloneqq \frac{(1-t)^{\alpha - \beta}}{(\arccos t)^{s_i}}-\frac{(1+t)^{\alpha - \beta}}{(\arccos(-t))^{s_i}}
\end{equation*}
is positive on $(0,1)$. 
\end{proposition}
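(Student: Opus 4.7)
My plan is to change variables via $u = \arccos t$, which will collapse the claim to a clean trigonometric inequality, and then to settle that inequality by one-variable calculus.

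For $t \in (0,1)$ I would set $u = \arccos t \in (0, \pi/2)$, so that $\arccos(-t) = \pi - u$, and apply the half-angle identities $1 - t = 2\sin^2(u/2)$, $1 + t = 2\cos^2(u/2)$. A direct substitution shows that $p(t) > 0$ is equivalent to
\[
\tan^{2(\alpha-\beta)}(u/2) \cdot \left( \frac{\pi - u}{u} \right)^{s_i} > 1.
\]
For the geometric values in \eqref{eq:JacobiAlphaBeta} one always has $\alpha \geq \beta$. The case $\alpha = \beta$ (the sphere and the projective lines) follows immediately from $u < \pi - u$ together with $s_i > 0$, so I focus on $\alpha > \beta$. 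Since $(\pi - u)/u > 1$ on $(0, \pi/2)$ and $s_i \geq 2(\alpha - \beta) > 0$, one has $\bigl((\pi - u)/u\bigr)^{s_i} \geq \bigl((\pi - u)/u\bigr)^{2(\alpha - \beta)}$, so it suffices to verify the single trigonometric inequality
\[
(\pi - u)\tan(u/2) > u \qquad \text{for all } u \in (0, \pi/2).
\]

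For this I would set $\psi(u) = (\pi - u)\tan(u/2)$ and observe that $\psi(u) - u$ vanishes at both endpoints $u = 0$ and $u = \pi/2$ (as $\psi(\pi/2) = \pi/2$). I would then compute
\[
\psi'(u) = \tfrac{1}{2}\sec^2(u/2)\bigl[\pi - u - \sin u\bigr], \qquad \psi''(u) = \sec^2(u/2)\left[\tfrac{\psi(u)}{2} - 1\right],
\]
the second identity following from $\tan(u/2)\sin u = 1 - \cos u$. The first formula shows $\psi' > 0$ on $(0, \pi/2)$ (since $\pi - u > 1 \geq \sin u$), so $\psi$ is strictly increasing with maximum value $\pi/2 < 2$, forcing $\psi'' < 0$ throughout. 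Hence $\psi' - 1$ is strictly decreasing, and since $\psi'(0) - 1 = \pi/2 - 1 > 0$ while $\psi'(\pi/2) - 1 = \pi/2 - 2 < 0$, it changes sign exactly once. Therefore $\psi - u$ strictly increases and then strictly decreases on $(0, \pi/2)$, and vanishing at both endpoints yields $\psi(u) - u > 0$ on the open interval.

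The hardest step is plainly the transcendental inequality $(\pi - u)\tan(u/2) > u$; everything else is bookkeeping. It is worth noting that $(\pi - u)\tan(u/2) = u$ holds at both endpoints, matching $p(0) = 0$ and reflecting the critical behavior of $p(t)$ as $t \to 1^-$ precisely when $s_i = 2(\alpha - \beta)$; this suggests the threshold $s_i \geq 2(\alpha - \beta)$ in the proposition is tight.
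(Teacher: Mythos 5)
Your proof is correct, but it takes a genuinely different route from the paper's. The paper shows $p(0)=0$ and then proves $p'(t)>0$ on $(0,1)$ by differentiating in the $t$-variable and reducing positivity of the derivative to the auxiliary bound $(\alpha-\beta)\arccos t\,\sqrt{(1+t)/(1-t)} < s_i$, which it establishes by showing the left-hand side is strictly increasing (via $\arccos t > \sqrt{1-t^2}$) with limit $2(\alpha-\beta)$ as $t\to 1^-$. You instead substitute $u=\arccos t$, use the half-angle identities to restate positivity of $p$ as $\tan^{2(\alpha-\beta)}(u/2)\,\bigl((\pi-u)/u\bigr)^{s_i}>1$, use monotonicity in the exponent to pass to the worst case $s_i=2(\alpha-\beta)$, and reduce everything to the single parameter-free inequality $(\pi-u)\tan(u/2)>u$ on $(0,\pi/2)$, which you verify by a correct monotonicity/concavity argument for $\psi(u)=(\pi-u)\tan(u/2)$ (your formulas for $\psi'$ and $\psi''$ check out; note that strict concavity of $\psi$ together with $\psi(0)=0$ and $\psi(\pi/2)=\pi/2$ already places $\psi$ above its chord $u$, so the sign-change bookkeeping for $\psi'-1$ could be shortened). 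What each approach buys: your reduction cleanly isolates the roles of $s_i$ and $\alpha-\beta$ and ends with a one-variable transcendental inequality, whereas the paper's argument additionally shows that $p$ itself is increasing on $(0,1)$ and makes the origin of the constant $2(\alpha-\beta)$ transparent as a limit at $t=1$. One small point common to both treatments: when $\alpha=\beta$ one needs $s_i>0$ (otherwise $p\equiv 0$); you invoke this explicitly, and it is justified in the context where the proposition is applied, since Proposition \ref{prop:FDerivatives} gives $s_i\ge s+1>0$.
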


\begin{proof}
Since $p(0) = 0$, it suffices to show that
\begin{align*}
\frac{d}{dt} p(t)&= (\arccos t)^{-s_i -1} (1-t)^{\alpha - \beta - \frac{1}{2}} \left( \frac{s_i}{\sqrt{1+t}} - \frac{(\alpha - \beta) \arccos t}{\sqrt{1-t}} \right)\\
& \qquad + (\arccos (-t))^{-s_i -1} (1+t)^{\alpha - \beta - \frac{1}{2}} \left( \frac{s_i}{\sqrt{1-t}} - \frac{(\alpha - \beta) \arccos (-t)}{\sqrt{1+t}} \right)
\end{align*}
is positive on $(0,1)$. We have this immediately when $\alpha = \beta$ (when $\Omega$ is a sphere or $\mathbb{FP}^1$), so we now consider the cases where $\alpha > \beta$. We see that this positivity occurs when 
\begin{equation}\label{eq:Positivity aux}
(\alpha - \beta)\arccos t \cdot \sqrt{\frac{1+t}{1-t}} <  s_i
\end{equation}
on $(-1,1)$. Using the elementary inequality $\arccos t >  \sqrt{1-t^2}$ for $t\in (-1,1)$, we see that 
\begin{align*}
\frac{d}{dt}\left((\alpha - \beta)\arccos t \cdot \sqrt{\frac{1+t}{1-t}}\right) &= (\alpha - \beta) \frac{t^2+\arccos t \cdot  \sqrt{1-t^2}-1}{(1-t)^2(1+t)}\\
&> (\alpha - \beta) \frac{t^2+(1-t^2)-1}{(1-t)^2(1+t)} = 0,
\end{align*}
on $(-1,1)$, i.e. the expression on the left-hand side of \eqref{eq:Positivity aux} is strictly increasing.  Since
$$ \lim_{t\to1^-}  (\alpha- \beta)\arccos t \cdot \sqrt{\frac{1+t}{1-t}} =  2 (\alpha - \beta),$$ 
the strict inequality \eqref{eq:Positivity aux} holds for  $t \in [-1,1)$ if $s_i \geq 2(\alpha - \beta)$.
\end{proof}


Since according to Proposition \ref{prop:FDerivatives}, $s_i \geq s+1$,  Proposition \ref{prop:Positivitiy of odd terms} holds when $s \geq 2(\alpha - \beta) - 1$.
Thus,
\begin{equation*}
\int_{-1}^1 F_{s}(t) P_n^{(\alpha, \beta)}(t) (1-t)^{\alpha} (1+t)^{\beta} dt > 0
\end{equation*}
for all $n \geq 1$ when $ 2(\alpha - \beta) - 1 \leq s < D$, as long as $s>-1$, which finishes the proof of Theorem \ref{thm:Pos Jacobi Coefficients Geod Riesz}.

\section{Energy minimization with singular kernels}\label{s.singularkernels}

Now we want to show that if the Jacobi coefficients of $f$ are nonnegative  and $f$ has certain other additional properties, then  $I_f (\mu)$ is minimized by the uniform measure $\sigma$.  If the kernel $f$ is continuous, one could simply invoke Theorem \ref{thm:uniform measure min, continuous} in order to make this conclusion (this would apply to the case $-1<s<0$).

However, for our purposes, the most interesting  kernels are singular ($s\geq 0$), and Theorem \ref{thm:uniform measure min, continuous} does not apply directly.  We shall use different  approximation arguments to first  prove this statement for all absolutely continuous (with respect to $\sigma$) measures and then extend it to all Borel measures on $\Omega$. In what follows, we assume that the Jacobi coefficients of $f$ are nonnegative.

\subsection{The proof for absolutely continuous measures: Ces\`aro summation}\label{s.abs}
To construct a continuous approximation of the kernel, we shall employ Ces\`aro means. Assume that  $f \in  L^1 \big( [-1,1],    \nu^{(\alpha, \beta)} \big)$, i.e. the function $f (\cos(\vartheta(x,y)))$ is integrable on $\Omega$ with respect to $\sigma$ in both $x$ and $y$.  For $\delta \ge 0$ and $k \in \mathbb{N}_0$, define
\begin{equation}
A_k^\delta = {{k + \delta} \choose {k}} = \frac{(\delta +k) (\delta +k -1) \dots (\delta +1)}{k!}.
\end{equation}
Given the Jacobi expansion of $f$, as defined in \eqref{eq:Jacobi Expansion}, the Ces\`{a}ro $(C,\delta)$ means of $f$ is the polynomial
\begin{equation}\label{eq:Cesaro}
S_n^\delta f (t) = \frac{1}{A_n^\delta} \sum_{k=0}^n A_{n-k}^\delta  \widehat{f}_k  P_k^{(\alpha, \beta)} (t). 
\end{equation}
It is known that for $\delta > \alpha + \frac{1}{2}$, $\alpha \geq \beta > -1$,  the Ces\`aro means $S_n^\delta f$ converge to $f$ in the norm of $L^1 \big( [-1,1],  \nu^{(\alpha, \beta)} \big)$, see \cite[Theorem 2.5.3]{DaiX}, or, equivalently, 
\begin{equation}
\int_{\Omega} \int_{\Omega}  \big| S_n^\delta f (\cos(\vartheta(x,y))) - f (\cos(\vartheta(x,y)))  \big|\, d\sigma (x) d\sigma (y) \rightarrow 0 \,\,\, \textup{ as } \,\,\,\, n\rightarrow \infty. 
\end{equation}
The basic idea of the proof  is as follows: it is easy to see that for all $p \in [1, \infty]$,  $\lim\limits_{n \rightarrow \infty} \|S_n^{\delta} f - f \|_p = 0$ for any polynomial. Furthermore,    \cite[Corollary 18.11]{ChaM93} shows that if $1 \leq p \leq \infty$, $\delta > \alpha + \frac{1}{2}$, $\alpha \geq \beta > -1$ and $f \in L^p([-1,1], \nu^{(\alpha, \beta)})$, then $\| S_n^\delta f\|_{p} \leq c_p \|f\|_p$, i.e.  Ces\`{a}ro means form a  uniformly bounded family of operators on $L^p$.    
This implies $L^p$ convergence by a standard density argument. 



It is easy to see that if the Jacobi coefficients $\widehat{f}_n$ of $f$ are positive, the same is true for $S_n^\delta f$. Since the function $S_n^\delta f $ is continuous (it is a polynomial), Theorem \ref{thm:uniform measure min, continuous} applies to $S_n^\delta f$, i.e. the energy $I_{S_n^\delta f} (\mu)$ is minimized by $\sigma$.

We first assume that a Borel probability  measure $\mu$ on $\Omega$ is absolutely continuous with respect to $\sigma$ and has bounded density, i.e. $d\mu (x) = h (x) d\sigma (x)$ for some $h \in L^{\infty}(\Omega, \sigma)$, with $h$ not  identically $1$, so that $\mu \neq \sigma$. This then implies that there is some $m \in \mathbb{N}$ such that $I_{P_m^{(\alpha, \beta)}}(\mu) > I_{P_m^{(\alpha, \beta)}}(\sigma) = 0$ (see Lemma \ref{l.App1} in the Appendix),  and for all other $n \in \mathbb{N}$, $I_{P_n^{(\alpha, \beta)}}(\mu) \geq I_{P_n^{(\alpha, \beta)}}(\sigma) = 0$ due to \eqref{eq:Jacobi Pos def}. 
Thus for $n \geq m$,
\begin{align*}
I_{S_n^\delta f} (\mu) = \frac{1}{A_n^{\delta}} \sum_{k=0}^n A_{n-k}^\delta  \widehat{f}_k  I_{P_k^{(\alpha, \beta)}}(\mu) & \geq \widehat{f}_0 + \frac{A_{n-m}^\delta }{A_n^{\delta}} \widehat{f}_m  I_{P_m^{(\alpha, \beta)}}(\mu)\\
& = I_{S_n^\delta f} (\sigma) + \frac{A_{n-m}^\delta }{A_n^{\delta}} \widehat{f}_m  I_{P_m^{(\alpha, \beta)}}(\mu).
\end{align*}

Then for any $\varepsilon >0$, by choosing $n \geq m$ large enough,  we obtain 
\begin{align*}
I_{f} (\mu) & =  \int_{\Omega} \int_{\Omega} f  (\cos(\vartheta(x,y)))  \, d\mu (x) d\mu (y) \\
& \geq  I_{S_n^\delta f} (\mu) -   \int_{\Omega} \int_{\Omega} \big| S_n^\delta f  (\cos(\vartheta(x,y))) - f  (\cos(\vartheta(x,y)))  \big|\, |h(x)| \, |h(y) | \, d\sigma (x) d\sigma (y)\\
& \ge I_{S_n^\delta f} (\sigma) + \frac{A_{n-m}^{\delta}}{A_n^{\delta}} \widehat{f}_m I_{P_m^{(\alpha, \beta)}}(\mu) - \|h\|_\infty^2 \int_{\Omega} \int_{\Omega}  \big| S_n^\delta f (\cos(\vartheta(x,y))) - f (\cos(\vartheta(x,y)))  \big|\, d\sigma (x) d\sigma (y) \\
& \ge I_{S_n^\delta f} (\sigma) + \frac{A_{n-m}^{\delta}}{A_n^{\delta}} \widehat{f}_m I_{P_m^{(\alpha, \beta)}}(\mu) - \frac{\varepsilon}{2} \ge I_{f} (\sigma) + \frac{A_{n-m}^{\delta}}{A_n^{\delta}} \widehat{f}_m I_{P_m^{(\alpha, \beta)}}(\mu)  - \varepsilon. 
\end{align*}

Since $\varepsilon >0$ is arbitrary, we find that
\begin{align*}
I_{f} (\mu) & \geq I_{f}(\sigma) + \lim_{n \rightarrow \infty} \frac{A_{n-m}^{\delta}}{A_n^{\delta}} \widehat{f}_m I_{P_m^{(\alpha, \beta)}}(\mu)\\
& = I_{f}(\sigma) + \lim_{n \rightarrow \infty} \frac{n (n-1) \cdots (n-m+1)}{(n + \delta) \cdots (n-m + \delta)} \widehat{f}_m I_{P_m^{(\alpha, \beta)}}(\mu) \\
& = I_{f}(\sigma) + \widehat{f}_m I_{P_m^{(\alpha, \beta)}}(\mu)\\
 & \geq I_{f}(\sigma),
\end{align*}
with the last inequality being strict if $\widehat{f}_m > 0$. This gives us the following result.
 
\begin{proposition}\label{prop:sigma min abs cont measures bound density}
Suppose $f \in L^1( [-1,1], \nu^{(\alpha, \beta)})$. Then the following hold:
\begin{itemize}
\item If for $n \in \mathbb{N}$, $\widehat{f}_n \geq 0$, then $\sigma$ is a minimizer of $I_f$ among all probability measures of the form $d\mu(x) = h(x) d\sigma(x)$ for some $h \in L^{\infty}(\Omega, \sigma)$.
\item If for $n \in \mathbb{N}$, $\widehat{f}_n > 0$, then $\sigma$ is the {unique minimizer} of $I_f$ among all probability measures of the form $d\mu(x) = h(x) d\sigma(x)$ for some $h \in L^{\infty}(\Omega, \sigma)$.
\end{itemize} 
\end{proposition}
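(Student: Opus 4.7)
The plan is to approximate $f$ by a sequence of continuous kernels whose Jacobi coefficients are controlled by those of $f$, and then transfer the minimization property of Theorem \ref{thm:uniform measure min, continuous} (which requires continuity) to $f$ itself via a limiting argument. The absolute continuity hypothesis $d\mu = h\, d\sigma$ with $h \in L^\infty(\Omega,\sigma)$ is exactly what is needed to reduce energy convergence to $L^1$-convergence of the kernels, since one can pull the density out in the sup norm.

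For the approximation I would use Ces\`aro $(C,\delta)$ means $S_n^\delta f$ of the Jacobi expansion of $f$, for a fixed $\delta > \alpha + \tfrac{1}{2}$. Two facts play a role: first, since $f \in L^1([-1,1], \nu^{(\alpha,\beta)})$, Jacobi-Ces\`aro summability yields $\|S_n^\delta f - f\|_{L^1(\nu^{(\alpha,\beta)})} \to 0$; second, the coefficients $\tfrac{A_{n-k}^\delta}{A_n^\delta}\widehat{f}_k$ are nonnegative scalar multiples of $\widehat{f}_k$, so the (strict) positivity of the Jacobi coefficients of $f$ is inherited by each polynomial $S_n^\delta f$. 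Theorem \ref{thm:uniform measure min, continuous} then applies to each $S_n^\delta f$, giving $I_{S_n^\delta f}(\mu) \geq I_{S_n^\delta f}(\sigma)$ for every Borel probability measure $\mu$.

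For $\mu = h\, d\sigma$ with $h \in L^\infty$, I would estimate
\[
|I_f(\mu) - I_{S_n^\delta f}(\mu)| \leq \|h\|_\infty^2 \, \|S_n^\delta f - f\|_{L^1(\nu^{(\alpha,\beta)})},
\]
and similarly $|I_f(\sigma) - I_{S_n^\delta f}(\sigma)| \leq \|S_n^\delta f - f\|_{L^1(\nu^{(\alpha,\beta)})}$; both tend to zero, so passing to the limit in $I_{S_n^\delta f}(\mu) \geq I_{S_n^\delta f}(\sigma)$ yields the first claim $I_f(\mu) \geq I_f(\sigma)$. For uniqueness under strict positivity of all $\widehat{f}_n$, I would expand
\[
I_{S_n^\delta f}(\mu) - I_{S_n^\delta f}(\sigma) = \sum_{k=1}^n \frac{A_{n-k}^\delta}{A_n^\delta} \, \widehat{f}_k \, I_{P_k^{(\alpha,\beta)}}(\mu),
\]
observe that each term is nonnegative by positive definiteness of $P_k^{(\alpha,\beta)}$, and, if $\mu \neq \sigma$, select one index $m$ for which $I_{P_m^{(\alpha,\beta)}}(\mu) > 0$; such an $m$ exists by a completeness-type lemma for Jacobi expansions (essentially, if all projective Jacobi moments of $h-1$ vanished then $h$ would be constantly $1$, contradicting $\mu \neq \sigma$). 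Since $\tfrac{A_{n-m}^\delta}{A_n^\delta} \to 1$ as $n\to\infty$, keeping only the $k=m$ term in the lower bound gives $I_f(\mu) - I_f(\sigma) \geq \widehat{f}_m \, I_{P_m^{(\alpha,\beta)}}(\mu) > 0$.

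The main obstacle I anticipate is the interchange of the limit with the energy integral: for a singular $f$ one cannot bound the kernels pointwise, so the combination of $L^1$-convergence of Ces\`aro means with the $L^\infty$-bound on the density is essential, and this is precisely why the statement must be restricted to absolutely continuous measures with bounded density at this stage. Extending to arbitrary Borel probability measures (where $h$ need not exist or need not be bounded) is genuinely harder and will require a different tool such as the $A_1$-type inequalities advertised at the start of Section \ref{s.singularkernels}.
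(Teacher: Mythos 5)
Your proposal is correct and follows essentially the same route as the paper: Ces\`aro $(C,\delta)$ means with $\delta>\alpha+\tfrac12$, inheritance of (strict) positivity of Jacobi coefficients by $S_n^\delta f$, Theorem \ref{thm:uniform measure min, continuous} applied to these continuous kernels, the $\|h\|_\infty^2\,\|S_n^\delta f-f\|_{L^1(\nu^{(\alpha,\beta)})}$ bound to pass to the limit, and retention of the single index $m$ with $I_{P_m^{(\alpha,\beta)}}(\mu)>0$ (the paper's Lemma \ref{l.App1}) together with $A_{n-m}^\delta/A_n^\delta\to 1$ for uniqueness. No substantive differences.
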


Next, we assume that $\mu$ is any Borel probability measure  absolutely continuous with respect to $\sigma$, i.e. $d\mu (x) = h(x) d\sigma (x)$ with $h\ge 0$, $h \in L^1 (\Omega, \sigma)$,  and $\| h \|_1 =1 $.  Assume that $I_f (\mu) < \infty$ (otherwise there is nothing to prove), define $h_n = \min \{ h,n \}$, and define a measure $\mu_n$ by setting $d\mu_n (x) = h_n (x) d\sigma (x)$. Notice that $\mu_n$ is not a probability measure, but $\| \mu_n \| = \| h_n \|_{1} \rightarrow 1 $ as $n\rightarrow \infty$ by the Lebesgue dominated convergence theorem.  Therefore, if $f\ge 0$, 
\begin{align*}
I_f (\mu) \ge I_f (\mu_n) \ge \| h_n \|_{1}^2   I_f (\sigma)  \rightarrow   I_f (\sigma) \,\,\, \textup{ as } \,\,\,\, n\rightarrow \infty,
\end{align*}
If $f$ is not positive, but is bounded from below, the same conclusion follows simply by adding a constant to $f$. This leads to the following statement. 

\begin{proposition}
Suppose that $f \in L^1( [-1,1], \nu^{(\alpha, \beta)})$ is bounded from below. If for $n \in \mathbb{N}$, $\widehat{f}_n \geq 0$, then $\sigma$ is a minimizer of $I_f$ among all probability measures of the form $d\mu(x) = h(x) d\sigma(x)$ for some $h \in L^{1}(\Omega, \sigma)$.
\end{proposition}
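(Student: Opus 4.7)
The plan is to bootstrap from the preceding proposition via a two-step reduction: first drop to the case $f \geq 0$ by adding a constant, and then pass from $L^\infty$ densities to $L^1$ densities by truncation. The hypothesis $\widehat f_n \geq 0$ for $n \geq 1$ is already exploited in Proposition~\ref{prop:sigma min abs cont measures bound density} to handle bounded densities, so the task is to lift that conclusion to all absolutely continuous probability measures under the additional assumption that $f$ is bounded below.

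First I would reduce to $f \geq 0$. Choose $C$ with $f + C \geq 0$ on $[-1,1]$ and set $\tilde f = f + C$. Since adding a constant only shifts $\widehat f_0$, the Jacobi coefficients satisfy $\widehat{\tilde f}_n = \widehat f_n$ for all $n \geq 1$, so the hypothesis on Jacobi coefficients is preserved. Moreover, because $\mu$ and $\sigma$ are both probability measures, $I_{\tilde f}(\mu) - I_{\tilde f}(\sigma) = I_f(\mu) - I_f(\sigma)$, so the desired inequality is equivalent to $I_{\tilde f}(\mu) \geq I_{\tilde f}(\sigma)$. Hence we may, and shall, assume $f \geq 0$.

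Next I would truncate the density. Writing $d\mu = h\, d\sigma$ with $h \geq 0$ and $\|h\|_{L^1(\sigma)} = 1$, define $h_n = \min\{h, n\}$ and $d\mu_n = h_n\, d\sigma$, and set $a_n = \|h_n\|_{L^1(\sigma)}$; by the Lebesgue dominated convergence theorem, $a_n \uparrow 1$. For $n$ large enough that $a_n > 0$, the normalized measure $\tilde\mu_n := a_n^{-1}\mu_n$ is a Borel probability measure with bounded density $h_n/a_n \in L^\infty(\Omega, \sigma)$, so Proposition~\ref{prop:sigma min abs cont measures bound density} applies and yields $I_f(\tilde\mu_n) \geq I_f(\sigma)$. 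Since $I_f$ is a double integral, it is quadratically homogeneous in the underlying measure, and this rescales to $I_f(\mu_n) \geq a_n^2\, I_f(\sigma)$. Because $f \geq 0$ and $h_n(x)h_n(y) \uparrow h(x)h(y)$ pointwise and monotonically, the monotone convergence theorem gives $I_f(\mu_n) \uparrow I_f(\mu)$, and letting $n \to \infty$ produces $I_f(\mu) \geq I_f(\sigma)$.

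There is no real obstacle here; the two subtleties to be careful about are that the reduction to $f \geq 0$ really is free (which works precisely because both $\mu$ and $\sigma$ are probability measures, so the additive constant contributes identically to both energies) and that the limit $n \to \infty$ be taken by monotone convergence rather than dominated convergence (which is why the hypothesis ``$f$ bounded below'' is imposed in the statement: it is exactly what is needed to make the reduction available).
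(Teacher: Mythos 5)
Your proposal is correct and follows essentially the same route as the paper: reduce to $f\ge 0$ by adding a constant (harmless since $\mu$ and $\sigma$ are both probability measures), truncate the density $h_n=\min\{h,n\}$, apply the bounded-density proposition to the normalized truncation together with quadratic homogeneity, and let $n\to\infty$. The only cosmetic difference is that the paper simply uses $I_f(\mu)\ge I_f(\mu_n)$ (from $f\ge 0$ and $h_n\le h$) and passes to the limit on the right-hand side, whereas you invoke monotone convergence to get $I_f(\mu_n)\uparrow I_f(\mu)$ — both are fine.
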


It is not straightforward to obtain a uniqueness statement akin to the second part of Proposition \ref{prop:sigma min abs cont measures bound density}, but we shall circumvent it later on (bounded density will suffice in the approximation arguments of the next subsection). 



\subsection{Arbitrary measures: averaging approximations}\label{s.all}

In order to pass to all measures, we shall approximate both the kernel and the measure by averaging them over balls of small radius.  We begin with some auxiliary definitions. 

Let $B_\varepsilon (x)$ denote a ball  on $\Omega$ centered at $x \in \Omega$ and of geodesic radius $\varepsilon > 0$, i.e. $ B_\varepsilon(x) = \{ z \in \Omega :  \vartheta(z,x) < \varepsilon\}$.  As the uniform surface measure of such a ball is independent of the center, we shall denote it simply by $\sigma (B_\varepsilon)$. 

Consider  a non-negative function  $f \in L^1 \big( [-1,1],    \nu^{(\alpha,\beta)} \big)$. Since $f ( \cos(\vartheta(x,y)))$ is integrable with respect to $d\sigma (x) d\sigma (y)$, we can define an approximation

\begin{equation}\label{eq:fepsilon}
f^{(\varepsilon)} (\cos(\vartheta(x,y)) ) =  \frac{1}{\big(  \sigma (B_\varepsilon) \big)^2 } \int_{B_\varepsilon(x)} \int_{B_\varepsilon (y )} f (\cos(\vartheta(u,v)) ) d\sigma (u) d\sigma (v). 
\end{equation}
It is obvious that  $f^{(\varepsilon)}$ depends just on the quantity $\cos(\vartheta(x,y))$,  $f^{(\varepsilon)}$  is continuous (and hence bounded),  and that $f^{(\varepsilon)} (\cos(\vartheta(x,y))) \rightarrow f (\cos(\vartheta(x,y)))$  as $\varepsilon \rightarrow 0^+$ for all $x,y \in \Omega$ such that $f $ is continuous at $\cos(\vartheta(x,y))$ (in the extended sense). This convergence also holds $\sigma\times \sigma$-a.e. due to the Lebesgue differentiation theorem, but this is not enough for our purposes since we shall need to consider different measures, most importantly, those not absolutely continuous with respect to $\sigma$. We would like to remark that similar averages for the logarithmic kernel on $\mathbb S^2$ and Green kernels on $\mathbb{S}^d$ have been used for a different energy problem (and even computed explicitly)  in \cite{BelL}.   \\

Similarly, given a Borel probability measure $\mu$ on $\Omega$, we define its approximation by 
\begin{equation}
d \mu^{(\varepsilon)}  (x) = \frac{\mu \big(B_\varepsilon(x) \big)}{\sigma  \big(B_\varepsilon(x) \big)} d\sigma (x) =  \bigg( \frac{1}{  \sigma (B_\varepsilon) } \int_{B_\varepsilon(x)} d\mu (u) \bigg) d\sigma (x).
\end{equation}
Notice that $\mu^{(\varepsilon)}$ is a probability measure since 
\begin{equation}
\int_{\Omega} d\mu^{(\varepsilon)} (x) =   \frac{1}{  \sigma (B_\varepsilon) }  \int_{\Omega} \int_{B_\varepsilon(x)} d\mu (u) d\sigma (x) 
=   \frac{1}{  \sigma (B_\varepsilon) }  \int_{\Omega} \bigg( \int_{B_\varepsilon(u)}  d\sigma (x)  \bigg) d\mu (u)  =  \int_{\Omega}  d\mu (u ) = 1,
\end{equation}
and this also shows that this  measure is absolutely continuous with respect to $\sigma$, with bounded density (using the convention $d\mu(x) = h(x) d\sigma(x)$)
$$h(x) = 
 \frac{\mu \big(B_\varepsilon(x) \big)}{\sigma  \big(B_\varepsilon(x) \big)}  \leq  \frac{1}{  \sigma (B_\varepsilon) }.$$

\noindent We also observe that the following relation holds between the approximations $\mu^{(\varepsilon)}$ and $f^{(\varepsilon)}$:
\begin{equation} \label{eq:muandf}
I_f \big( \mu^{(\varepsilon)} \big)   =  I_{f^{(\varepsilon)}} (\mu). 
\end{equation}
Indeed, 
\begin{align}
\nonumber I_f \big( \mu^{(\varepsilon)} \big) & = \int_{\Omega} \int_{\Omega}  f (\cos(\vartheta(x,y)) ) d\mu^{(\varepsilon)} (x) d\mu^{(\varepsilon)}(y) \\
\nonumber  & =  \frac{1}{\big(  \sigma (B_\varepsilon) \big)^2 }  \int_{\Omega} \int_{\Omega} \int_{B_\varepsilon(x)} \int_{B_\varepsilon(y)}   f (\cos(\vartheta(x,y)) ) \, d\mu (u) d\mu (v) d\sigma (x) d\sigma (y) \\
\nonumber & =  \int_{\Omega} \int_{\Omega} \bigg(  \frac{1}{\big(  \sigma (B_\varepsilon) \big)^2 }    \int_{B_\varepsilon(u)} \int_{B_\varepsilon(v)}   f( \cos(\vartheta(x,y)))  d\sigma (x) d\sigma (y) \bigg)  \, d\mu (u) d\mu (v)\\
\nonumber & =  \int_{\Omega} \int_{\Omega}  f^{(\varepsilon)} (\cos(\vartheta(u,v)) ) d\mu (u) d\mu (v)  =  I_{f^{(\varepsilon)}} (\mu).  
\end{align}
We note that this all holds true if $\mu$ is a finite signed Borel measure. In this case $\mu^{(\varepsilon)}$ is a finite signed measure, with $\mu^{(\varepsilon)}(\Omega) = \mu(\Omega)$, and still an absolutely continuous measure with bounded density.

We now show that nonnegativity of Jacobi coefficients is preserved by the approximation $f^{(\varepsilon)}$.
\begin{proposition}\label{prop:epsilon approx function pos def}
Suppose $f \in L^1( [-1,1], \nu^{(\alpha, \beta)})$ and let $ \varepsilon>0$. 
If for $n \in \mathbb{N}$, $\widehat{f}_n \geq 0$, then the continuous kernel $f^{(\varepsilon)}( \cos(\vartheta(x,y)))$ is conditionally positive definite, i.e.  $\widehat{f^{(\varepsilon)}}_n \geq 0$ for all $n\in \mathbb N$. 
\end{proposition}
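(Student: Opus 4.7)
The plan is to prove the stronger claim that the continuous kernel $f^{(\varepsilon)}(\cos \vartheta(x,y))$ is conditionally positive definite; since $f^{(\varepsilon)}$ is continuous, the equivalences in Theorem \ref{thm:uniform measure min, continuous} will then immediately yield $\widehat{f^{(\varepsilon)}}_n \geq 0$ for every $n \in \mathbb{N}$. Thus, the goal reduces to showing that for every finite signed Borel measure $\mu$ on $\Omega$ with $\mu(\Omega) = 0$, one has $I_{f^{(\varepsilon)}}(\mu) \geq 0$.

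First I would note that the construction of $\mu^{(\varepsilon)}$ and the Fubini derivation leading to \eqref{eq:muandf} extend verbatim to finite signed measures, so that $I_{f^{(\varepsilon)}}(\mu) = I_f(\mu^{(\varepsilon)})$. The signed measure $\mu^{(\varepsilon)}$ is absolutely continuous with respect to $\sigma$, with density bounded by $\|\mu\|/\sigma(B_\varepsilon)$, and it inherits $\mu^{(\varepsilon)}(\Omega) = 0$.

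Next I would adapt the Ces\`aro-averaging argument from Section \ref{s.abs} to this signed, zero-mass, bounded-density setting. Writing $d\mu^{(\varepsilon)} = h\, d\sigma$ with $h \in L^\infty(\Omega, \sigma)$ and $\int_\Omega h \, d\sigma = 0$, and choosing $\delta > \alpha + 1/2$, we have
\[
I_{S_N^\delta f}(\mu^{(\varepsilon)}) \;=\; \sum_{k=0}^N \frac{A_{N-k}^\delta}{A_N^\delta} \, \widehat{f}_k \, I_{P_k^{(\alpha,\beta)}}(\mu^{(\varepsilon)}) \;\geq\; 0,
\]
because the $k=0$ term vanishes thanks to $\mu^{(\varepsilon)}(\Omega) = 0$, while each remaining term is the product of non-negative factors by the hypothesis $\widehat{f}_k \geq 0$ and the positive definiteness \eqref{eq:Jacobi Pos def}. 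Combining the $L^1$-convergence $\|S_N^\delta f - f\|_{L^1(\nu^{(\alpha,\beta)})} \to 0$ with the boundedness of $h$ via
\[
\big|I_{S_N^\delta f}(\mu^{(\varepsilon)}) - I_f(\mu^{(\varepsilon)})\big| \;\leq\; \|h\|_\infty^2 \int_{-1}^1 \big|S_N^\delta f(t) - f(t)\big| \, d\nu^{(\alpha,\beta)}(t),
\]
and sending $N \to \infty$, gives $I_f(\mu^{(\varepsilon)}) \geq 0$, and therefore $I_{f^{(\varepsilon)}}(\mu) \geq 0$ by \eqref{eq:muandf}.

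The step that requires the most care is extending the Ces\`aro approximation from probability measures with bounded density (as carried out in Section \ref{s.abs}) to signed measures with bounded density; this is mostly bookkeeping, since each ingredient---the $L^1$ convergence of $S_N^\delta f$, the Fubini swap underlying \eqref{eq:muandf}, and the non-negativity of $I_{P_k^{(\alpha,\beta)}}$ on any signed measure---holds unchanged in the signed setting. A more spectral alternative would be to observe that the averaging operator $T_\varepsilon$ commutes with the isometry group of $\Omega$ and hence acts by multiplication by a real scalar $\tau_n(\varepsilon)$ on the $n$-th Laplacian eigenspace, yielding the direct identity $\widehat{f^{(\varepsilon)}}_n = \tau_n(\varepsilon)^2\,\widehat{f}_n \geq 0$ via the addition formula; I favor the route above because it reuses the Ces\`aro machinery developed earlier in the paper.
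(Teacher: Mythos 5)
Your argument is correct and follows essentially the same route as the paper: reduce to $\mu^{(\varepsilon)}$ via \eqref{eq:muandf}, observe that it is absolutely continuous with bounded density, run the Ces\`aro machinery, and finish with the continuity of $f^{(\varepsilon)}$ and Theorem \ref{thm:uniform measure min, continuous}. The paper is merely more economical: for a probability measure $\mu$ it applies Proposition \ref{prop:sigma min abs cont measures bound density} directly to $\mu^{(\varepsilon)}$ (together with $\sigma^{(\varepsilon)}=\sigma$) to get $I_{f^{(\varepsilon)}}(\mu)\ge I_{f^{(\varepsilon)}}(\sigma)$, so your rerun of the Ces\`aro argument for signed zero-mass measures, while valid, is not needed.
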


\begin{proof}
By Proposition \ref{prop:sigma min abs cont measures bound density}, we have that $\sigma$  minimizes $I_f$ over all absolutely continuous probability measures with bounded density. Applying \eqref{eq:muandf} and observing that $\sigma^{(\varepsilon)} = \sigma$, we find that
\begin{equation*}
I_{f^{(\varepsilon)}}(\mu) = I_f( \mu^{(\varepsilon)}) \geq I_{f}(\sigma) = I_{f}(\sigma^{(\varepsilon)}) = I_{f^{(\varepsilon)}}(\sigma).
\end{equation*}
Since $f^{(\varepsilon)}$ is continuous, Theorem \ref{thm:uniform measure min, continuous} then implies that the function is conditionally  positive definite and has nonnegative  Jacobi coefficients, aside from the constant term.
\end{proof} 

\begin{remark} It is interesting to observe that  the strict positive definiteness statement in this setting is somewhat delicate. For some $\mu \neq \sigma$ and some $\varepsilon >0$, it might happen that $\mu^{(\varepsilon)} = \sigma$. In fact, on the sphere $\mathbb S^d$, the so-called `freak theorem' \cite{freak1,freak2} states that for a countable dense (and very specific) set of values of $\varepsilon>0$, there exists a non-zero continuous  function $f$ on $\mathbb S^d$, such that its average over every ball $B_\varepsilon$ is zero. Then taking $d \mu =(1+ c f) d\sigma$  would produce a measure such that $\mu^{(\varepsilon)} = \sigma$. Thus the exact form and the proof of  strict positive definiteness in  Proposition \ref{prop:epsilon approx function pos def} would require extensions of the `freak theorem' to projective spaces, which are most likely possible but are beyond the scope of this paper. However, we shall be able to avoid this issue while proving uniqueness of  minimizers. 
\end{remark}


We finally state a fairly general theorem which guarantees that positivity of Jacobi coefficients of a singular kernel  implies that the uniform measure $\sigma$ minimizes the energy integral.

\begin{theorem}\label{thm:sigma a minimizer for epsilon convergence}
Let $f \in L^1([-1,1], \nu^{(\alpha, \beta)})$ be bounded from below, and assume that for all $\mu \in \mathcal{P}(\Omega)$ such that $I_f(\mu) < \infty$,
\begin{equation}\label{eq:converge}
\lim_{\varepsilon \rightarrow 0^+} I_{f^{(\varepsilon)}}(\mu) = I_f(\mu).
\end{equation}
Then the following statements hold
\begin{itemize}
\item If for $n \in \mathbb{N}$, $\widehat{f}_n \geq 0$, then $\sigma$ is a minimizer of $I_{f}$ over all probability measures on $\Omega$.
\item If for $n \in \mathbb{N}$, $\widehat{f}_n > 0$, then $\sigma$ is the unique minimizer of $I_f$ over all probability measures on $\Omega$. 
\end{itemize}
\end{theorem}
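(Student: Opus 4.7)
The plan is to approximate the possibly singular kernel $f$ by the continuous averaged kernels $f^{(\varepsilon)}$ from \eqref{eq:fepsilon}, for which Theorem \ref{thm:uniform measure min, continuous} is available, and then pass to the limit $\varepsilon\to 0^+$ using the assumed convergence \eqref{eq:converge}.

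\textbf{Minimality.} Fix $\mu \in \mathcal{P}(\Omega)$. If $I_f(\mu)=+\infty$ there is nothing to show, since $I_f(\sigma)=\int_{-1}^{1} f\,d\nu^{(\alpha,\beta)}$ is finite by the $L^1$ hypothesis. If $I_f(\mu)<\infty$, Proposition \ref{prop:epsilon approx function pos def} gives $\widehat{(f^{(\varepsilon)})}_n\geq 0$ for every $n\geq 1$, and since $f^{(\varepsilon)}$ is continuous, Theorem \ref{thm:uniform measure min, continuous} yields
\[
I_{f^{(\varepsilon)}}(\mu)\;\geq\;I_{f^{(\varepsilon)}}(\sigma).
\]
Directly from the definition, $\sigma^{(\varepsilon)}=\sigma$, so the identity \eqref{eq:muandf} gives $I_{f^{(\varepsilon)}}(\sigma)=I_f(\sigma^{(\varepsilon)})=I_f(\sigma)$. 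Sending $\varepsilon\to 0^+$ and using \eqref{eq:converge} on the left side produces $I_f(\mu)\geq I_f(\sigma)$, proving the first bullet.

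\textbf{Uniqueness.} A naive attempt to upgrade the above chain to a strict inequality from strict positivity of $\widehat{f}_n$ fails, because the map $\mu\mapsto\mu^{(\varepsilon)}$ need not be injective (the ``freak theorem'' phenomenon noted after Proposition \ref{prop:epsilon approx function pos def}). To sidestep this, I propose a perturbation argument. Suppose $\mu\neq\sigma$. By Lemma \ref{l.App1} (or equivalently completeness of the Jacobi expansion, which forces $\mu=\sigma$ whenever $I_{P_n^{(\alpha,\beta)}}(\mu)=0$ for every $n\geq 1$), there exists $m\geq 1$ with $I_{P_m^{(\alpha,\beta)}}(\mu)>0$. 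Fix $c\in(0,\widehat{f}_m]$ and set $g:=f-cP_m^{(\alpha,\beta)}$. Since $P_m^{(\alpha,\beta)}$ is a bounded continuous function on $[-1,1]$, the kernel $g$ lies in $L^1([-1,1],\nu^{(\alpha,\beta)})$, is bounded from below, and its Jacobi coefficients $\widehat{g}_n=\widehat{f}_n-c\delta_{nm}$ remain non-negative. Moreover, $g$ inherits the convergence hypothesis \eqref{eq:converge}: since $P_m^{(\alpha,\beta)}$ is continuous, $(P_m^{(\alpha,\beta)})^{(\varepsilon)}\to P_m^{(\alpha,\beta)}$ uniformly on $[-1,1]$, whence $I_{(P_m^{(\alpha,\beta)})^{(\varepsilon)}}(\mu)\to I_{P_m^{(\alpha,\beta)}}(\mu)$ for any probability measure. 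Applying the already-proved first part to $g$ and using $I_{P_m^{(\alpha,\beta)}}(\sigma)=0$ yields
\[
I_f(\mu)-c\,I_{P_m^{(\alpha,\beta)}}(\mu)\;=\;I_g(\mu)\;\geq\;I_g(\sigma)\;=\;I_f(\sigma),
\]
and rearranging gives $I_f(\mu)\geq I_f(\sigma)+c\,I_{P_m^{(\alpha,\beta)}}(\mu)>I_f(\sigma)$, which is the desired strict inequality.

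\textbf{Main obstacle.} The non-routine step is the uniqueness half, precisely because strict positive definiteness does not transfer cleanly through the averaging operation $f\mapsto f^{(\varepsilon)}$. The perturbation trick replaces the question ``is $f^{(\varepsilon)}$ strictly positive definite?'' with a family of weak-inequality problems for the kernels $f-cP_m^{(\alpha,\beta)}$, which are handled by the minimality half. All remaining ingredients---continuity and Jacobi-positivity of $f^{(\varepsilon)}$ from Proposition \ref{prop:epsilon approx function pos def}, the identities $\sigma^{(\varepsilon)}=\sigma$ and \eqref{eq:muandf}, and the convergence \eqref{eq:converge}---are either previously established or part of the hypothesis.
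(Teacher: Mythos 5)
Your proposal is correct, and its first half (minimality) follows essentially the paper's framework: average the kernel, use Proposition \ref{prop:epsilon approx function pos def} together with Theorem \ref{thm:uniform measure min, continuous} (or, equivalently, the inequality $I_{f^{(\varepsilon)}}(\mu)=I_f(\mu^{(\varepsilon)})\ge I_f(\sigma)=I_{f^{(\varepsilon)}}(\sigma)$ from Proposition \ref{prop:sigma min abs cont measures bound density}), and pass to the limit via \eqref{eq:converge}. Where you genuinely diverge is the uniqueness half. The paper first proves (Lemma \ref{l.App2} in the Appendix) that the energy convergence \eqref{eq:converge} forces convergence of the Jacobi coefficients, $\widehat{f^{(\varepsilon)}}_n\to\widehat f_n$, then expands $I_{f^{(\varepsilon)}}(\mu)-I_{f^{(\varepsilon)}}(\sigma)=\sum_{n\ge1}\widehat{f^{(\varepsilon)}}_n\,I_{P_n^{(\alpha,\beta)}}(\mu)$, justifying the interchange of sum and integration by Mercer-type absolute convergence for positive definite continuous kernels, drops all terms but $n=m$ (with $m$ supplied by Lemma \ref{l.App1}, exactly as in your argument), and lets $\varepsilon\to0^+$ to obtain $I_f(\mu)-I_f(\sigma)\ge\widehat f_m\,I_{P_m^{(\alpha,\beta)}}(\mu)>0$. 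Your perturbation $g=f-c\,P_m^{(\alpha,\beta)}$ with $0<c\le\widehat f_m$ reaches the same quantitative conclusion while bypassing both Lemma \ref{l.App2} and the Mercer-based term-by-term expansion: the only extra verification is that $g$ inherits hypothesis \eqref{eq:converge}, which is immediate since averaging is linear and $(P_m^{(\alpha,\beta)})^{(\varepsilon)}\to P_m^{(\alpha,\beta)}$ uniformly by uniform continuity of $P_m^{(\alpha,\beta)}(\cos\vartheta(x,y))$ on the compact space $\Omega\times\Omega$. What the paper's route buys is the explicit coefficient convergence $\widehat{f^{(\varepsilon)}}_n\to\widehat f_n$ (a statement of independent use); what yours buys is a more elementary reduction of strictness to the non-strict case, cleanly sidestepping the non-injectivity of $\mu\mapsto\mu^{(\varepsilon)}$ that you correctly flag. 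One cosmetic point: the definition \eqref{eq:fepsilon} and Proposition \ref{prop:epsilon approx function pos def} are phrased for non-negative (respectively $L^1$) kernels, and $g$ may take negative values; since $g$ is bounded below, adding a constant (which shifts only $\widehat g_0$ and all energies equally) fixes this, as the paper itself implicitly does for $f$.
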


\begin{proof}
By \eqref{eq:Equivalence with 1 dim setting}, we see that $I_f(\sigma) < \infty$. 
Lemma \ref{l.App2} in the Appendix shows that the convergence of energy integrals in the assumption of the theorem guarantees the convergence of  Jacobi coefficients, i.e.   for any $n \in \mathbb{N}$, we have
 $\lim\limits_{\varepsilon \rightarrow 0} \widehat{f^{(\varepsilon)}}_n = \widehat{f}_n$. 

Let $\mu \in \mathcal{P}(\Omega)$ such that $\mu \neq \sigma$, $I_f(\mu) < \infty$, i.e $f( \cos(\vartheta(x,y)))$ is integrable with respect to $\mu \times \mu$.   Since the Jacobi polynomials are positive definite, for all  $n \in \mathbb{N}$, we must have $I_{P_n^{(\alpha, \beta)}}(\mu) \geq 0$, and since $\mu \neq \sigma$, there must exist some $m \in \mathbb{N}$ such that $I_{P_m^{(\alpha, \beta)}}(\mu) > 0 = I_{P_m^{(\alpha, \beta)}}(\sigma)$, see Lemma \ref{l.App1} in the Appendix. Combining this with Proposition \ref{prop:epsilon approx function pos def} and the fact that $I_{f^{(\varepsilon)}}(\sigma) = I_f(\sigma) = \widehat{f}_0$, we have that
\begin{align*}
I_f(\mu) - I_f(\sigma) & = \lim_{\varepsilon \rightarrow 0^+} \Big( I_{f^{(\varepsilon)}}(\mu) - I_{f^{(\varepsilon)}}(\sigma) \Big) \\
 & = \lim_{\varepsilon \rightarrow 0^+} \sum_{n=1}^{\infty} \widehat{f^{(\varepsilon)}}_n  \int_{\Omega} \int_{\Omega} P_n^{(\alpha, \beta)}(\cos(\vartheta(x,y))) d\mu(x) d\mu(y) \\
& \geq \lim_{\varepsilon \rightarrow 0^+} \widehat{f^{(\varepsilon)}}_m  \int_{\Omega} \int_{\Omega} P_m^{(\alpha, \beta)}(\cos(\vartheta(x,y))) d\mu(x) d\mu(y) \\
& = \widehat{f}_m \int_{\Omega} \int_{\Omega} P_m^{(\alpha, \beta)}(\cos(\vartheta(x,y))) d\mu(x) d\mu(y).
\end{align*}
Here, in order to interchange the order of the integration and summation, we have used the fact that the Jacobi series of positive definite continuous kernels converge absolutely, see e.g. \cite[Lemma 2.16]{AndDGMS}, 
which can be viewed as a consequence of Mercer's Theorem \cite{Mer} (see \cite[Theorem 2.18]{BilMV}  for more general spaces). 

Thus, if $\widehat{f}_n \geq 0$ for all $n \in \mathbb{N}$ then $I_f(\mu) \geq I_f(\sigma)$ and if $\widehat{f}_n > 0$ for all $n \in \mathbb{N}$ then $I_f(\mu) > I_f(\sigma)$, and the theorem is proved. 
\end{proof}

 Theorem \ref{thm:sigma a minimizer for epsilon convergence} implies that for singular  kernels with positive Jacobi coefficients  the goal of understanding  when the energy is minimized by the uniform measure $\sigma$ is reduced to   checking  condition \eqref{eq:converge}, i.e. that  $I_{f^{(\varepsilon)}} (\mu) \rightarrow I_f (\mu)$ as $\varepsilon \rightarrow 0^+$.  In the following subsection we address this issue for the geodesic Riesz energies, and in Section \ref{s:further} we discuss extensions to more general kernels.  \\

\subsection{$A_1$ inequalities and the proof of Theorem \ref{t.geodriesz}.}\label{s:A1proof}

Let us return to the function under consideration $F_s (t ) = \frac{1}{s} (\arccos t )^{-s}$ for $s>0$ and $F_0(t) = - \log( \arccos(t))$. We shall prove the following proposition:

\begin{proposition}\label{p.max}
Let $0 \leq s <D $. There exists constants $A, C>0$ such that for all $\varepsilon >0$,  and for all $ x, y \in \Omega$ 
we have 
\begin{equation}\label{e.max1}
F_s^{(\varepsilon )} ( \cos(\vartheta(x,y))) \le \begin{cases}
 C F_s (\cos(\vartheta(x,y))), & s > 0\\
 A + C F_0 (\cos(\vartheta(x,y))), & s = 0
 \end{cases}
\end{equation}
or, in other words, 
\begin{align}
\label{e.max2} \sup_{\varepsilon > 0} \frac{1}{\big(  \sigma (B_\varepsilon) \big)^2 } \int_{B_\varepsilon(x)} \int_{B_\varepsilon (y )}  (\vartheta(u,v) )^{-s}  d\sigma (u) d\sigma (v)  & \le  C (\vartheta(x,y))^{-s}, & s > 0\\
\label{e.maxlog} \sup_{\varepsilon > 0} \frac{1}{\big(  \sigma (B_\varepsilon) \big)^2 } \int_{B_\varepsilon(x)} \int_{B_\varepsilon (y )} \big( - \log (\vartheta(u,v)) \big) d\sigma (u) d\sigma (v) & \le 
 A  -  C \log (\vartheta(x,y)). & { }
\end{align}
\end{proposition}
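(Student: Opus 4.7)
The plan is to establish the pointwise bound \eqref{e.max1} directly, via a Calder\'on--Zygmund-type dichotomy in the ratio $\varepsilon/\vartheta(x,y)$. The argument rests on two basic geometric facts available on every compact connected two-point homogeneous space $\Omega$: the volume comparison $\sigma(B_\varepsilon)\asymp \varepsilon^D$ for $\varepsilon\in (0,\pi/2]$, and the polar integration bound
\begin{equation*}
\int_{B_R(x)}\vartheta(u,x)^{-s}\,d\sigma(u)\ \lesssim\ \frac{R^{D-s}}{D-s},\qquad 0\le s<D,
\end{equation*}
together with its $s=0$ analogue $\int_{B_R(x)}(-\log\vartheta(u,x))\,d\sigma(u)\lesssim R^{D}(1+|\log R|)$. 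Both are immediate consequences of \eqref{eq:Equivalence with 1 dim setting} and the geometric values of $(\alpha,\beta)$ in \eqref{eq:JacobiAlphaBeta}, since the radial volume density behaves like $r^{2\alpha+1}=r^{D-1}$ near $r=0$. Note that the range $s<D$ enters precisely here, as the condition for the polar integral to converge.

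In the first regime, $\varepsilon\le \vartheta(x,y)/4$, the triangle inequality forces $\vartheta(u,v)\ge \vartheta(x,y)/2$ for every $u\in B_\varepsilon(x)$ and $v\in B_\varepsilon(y)$, so the integrand in \eqref{eq:fepsilon} is dominated pointwise by $F_s$ evaluated at $\vartheta(x,y)/2$, and \eqref{e.max1} follows with explicit constants ($C=2^s$ in \eqref{e.max2}; $C=1$ and $A=\log 2$ in \eqref{e.maxlog}). In the second regime, $\varepsilon>\vartheta(x,y)/4$, the triangle inequality instead yields $B_\varepsilon(y)\subset B_{6\varepsilon}(u)$ for every $u\in B_\varepsilon(x)$. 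Fubini combined with the polar bound above then gives
\begin{equation*}
\int_{B_\varepsilon(x)}\int_{B_\varepsilon(y)}\vartheta(u,v)^{-s}\,d\sigma(v)d\sigma(u)\ \lesssim\ \sigma(B_\varepsilon)\cdot \varepsilon^{D-s},
\end{equation*}
and dividing by $\sigma(B_\varepsilon)^2\asymp \varepsilon^{2D}$ produces $F_s^{(\varepsilon)}\lesssim \varepsilon^{-s}\lesssim \vartheta(x,y)^{-s}$, where the final step uses $\varepsilon>\vartheta(x,y)/4$. For the logarithmic kernel the same computation yields $F_0^{(\varepsilon)}\lesssim 1+|\log\varepsilon|\le A-C\log\vartheta(x,y)$; the additive constant $A$ in \eqref{e.maxlog} is genuinely needed, as $-\log\vartheta(x,y)$ may be negative when $\vartheta(x,y)$ is close to the diameter $\pi$.

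The main obstacle is purely boundary bookkeeping: ensuring the estimates hold uniformly for $\varepsilon$ throughout $(0,\pi]$, not just for $\varepsilon$ small. For $\varepsilon\ge \pi/2$ the volume comparison $\sigma(B_\varepsilon)\asymp \varepsilon^D$ degenerates, but in that case $F_s^{(\varepsilon)}$ is uniformly bounded by $\int_{-1}^{1}F_s\,d\nu^{(\alpha,\beta)}$ (finite since $s<D$), while $\vartheta(x,y)^{-s}\ge \pi^{-s}$, so \eqref{e.max1} holds trivially for a sufficiently large $C$. Combining this remark with the two-regime analysis produces the proposition. All other inputs --- the triangle inequality, the radial volume density, and the polar integration formula --- are standard consequences of the structural data already collected in Section \ref{s.prelim}, and no finer harmonic analysis is required at this stage.
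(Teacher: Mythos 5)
Your argument is correct, and it reaches \eqref{e.max1} by a somewhat different route than the paper. The paper works one variable at a time: it first shows that the single-ball average $\frac{1}{\sigma(B_\varepsilon)}\int_{B_\varepsilon(x)}\vartheta(u,y)^{-s}\,d\sigma(u)$ is largest when $x=y$ and bounds it by $C\varepsilon^{-s}$ via \eqref{eq:Equivalence with 1 dim setting} and $\arccos t\ge\sqrt{2-2t}$ (Lemmas \ref{l.1max1}, \ref{l.1max1log}), then proves a univariate $A_1$-type inequality by the dichotomy $\vartheta(x,y)>2\varepsilon$ versus $\vartheta(x,y)\le 2\varepsilon$ (Lemmas \ref{l.2max1}, \ref{l.2max1log}), and finally applies that univariate bound twice to handle the double average. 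You instead run a single bivariate dichotomy in $\varepsilon/\vartheta(x,y)$: in the far regime the triangle inequality gives $\vartheta(u,v)\ge\vartheta(x,y)/2$ exactly as in the paper's far case, while in the near regime you replace the paper's ``centered average is worst'' observation by the inclusion $B_\varepsilon(y)\subset B_{6\varepsilon}(u)$ together with the polar bound $\int_{B_R}\vartheta(u,\cdot)^{-s}d\sigma\lesssim R^{D-s}$ and $\sigma(B_\varepsilon)\asymp\varepsilon^D$; these rest on the same ingredients ($\arccos t\asymp\sqrt{1-t}$ near $t=1$, radial density $\asymp r^{D-1}$, and boundedness of $(1+t)^{\beta}$ away from $t=-1$, which matters when $\beta=-\tfrac12$). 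Your treatment of large $\varepsilon$ (trivial bound by the finite integral $\int F_s\,d\nu^{(\alpha,\beta)}$, which requires $s<D$) plays the role of the paper's restriction to $\varepsilon<\pi/8$. The net effect is the same constants-only statement; your version avoids the intermediate univariate maximal lemmas at the cost of the explicit ball-inclusion bookkeeping, and either argument generalizes equally well to the kernels covered by Theorem \ref{thm:general}. One small point to make explicit in the logarithmic near regime: enlarging the domain of integration from $B_\varepsilon(y)$ to $B_{6\varepsilon}(u)$ is only legitimate for a nonnegative integrand, and $-\log\vartheta(u,v)$ can be negative; this is fixed by first adding $\log\pi$ to the integrand (or working with $\log(\pi/\vartheta)$), and the shift is absorbed into the additive constant $A$, so it does not affect the conclusion.
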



\begin{remark}
Observe that the left-hand side of \eqref{e.max2} is the bivariate analogue of the classical Hardy--Littlewood maximal function and the inequality states that $M_{HL} (F_s) \le CF_s$ pointwise. This inequality  essentially coincides with the definition of the so-called Muckenhaupt $A_1$ weights (see, e.g., \cite[Chapter 7]{Gra}) and is thus consistent with the fact that the function $w(x) = \|x\|^\alpha $ (i.e. the Euclidean distance) on $\mathbb R^d$ is an $A_1$ weight precisely when $-d < \alpha <0$ (see Example 7.1.7 in \cite{Gra}, see also Example 7.1.8 for the logarithmic kernel on $\mathbb R^d$). See further discussion in \S\ref{s:further}. 
\end{remark}

We also note that \eqref{e.maxlog} looks slightly differently since $F_0$  can take negative values. By slightly redefining the logarithmic kernel, we could ensure that this inequality also has the form $F_0^{(\varepsilon)} \le C F_0$, but don't do it in order to keep the definition of the kernel $F_0$ consistent with the potential theory literature. \\

Assuming Proposition \ref{p.max} (whose proof is postponed to \S\ref{s:proofA1} below), we are now equipped to finish the proof of Theorem \ref{t.geodriesz} for $s\geq 0$.  Fix a probability measure $\mu$ such that $I_{F_s } (\mu)  < \infty$, i.e. $F_s(\cos(\vartheta(x,y)))$ is integrable with respect to $\mu \times \mu$  (otherwise, there is nothing to prove).  This, in particular, implies that the measure $\mu$ is atomless.  This easily implies that $\mu \times \mu ( \{ x =  y\} ) = 0$.  Since  $F^{(\varepsilon)}_s ( \cos(\vartheta(x,y)) ) \rightarrow F_s  (\cos(\vartheta(x,y)))$  as $\varepsilon \rightarrow 0^+$ for all $x,y \in  \Omega$ with $x\neq  y$, we have $\mu \times \mu$-a.e. convergence. Inequality \eqref{e.max1} of Proposition \ref{p.max}  then allows us to invoke the Lebesgue dominated convergence   theorem to obtain that  
\begin{equation}
I_{F_s^{(\varepsilon)} }( \mu ) \rightarrow I_{F_s} (\mu)\,\,\,  \textup{ as } \,\, \varepsilon \rightarrow 0^+.
\end{equation} 
Since in Theorem \ref{thm:Pos Jacobi Coefficients Geod Riesz} we have proved positivity of Jacobi coefficients of $F_s$  in the ranges indicated in Theorem \ref{t.geodriesz}, as well as non-negativity of such coefficients when $\Omega = \mathbb S^d$ or $\mathbb{FP}^1$ and $s=-1$, we can just apply Theorem \ref{thm:sigma a minimizer for epsilon convergence} to immediately prove Theorem  \ref{t.geodriesz}.  This in turn implies Theorems \ref{t.main} and \ref{t.d1}.


\subsubsection{Proof of the maximal inequality \eqref{e.max1}}\label{s:proofA1}
It remains to prove Proposition \ref{p.max}. We assume that $\varepsilon$ is small, say $\varepsilon < \frac{\pi}{8}$, since for large caps the averages in \eqref{e.max2} are clearly bounded by a constant.



We start with two lemmas which deal with estimates of the averages in just one variable. 
In this section we shall employ the notation $X \lesssim Y$ to mean that $ X \le C Y$ for some constant $C>0$. The implicit constant $C$ may depend on $d$, $\alpha$, $\beta$, and $s$, but stays independent of $x$, $y$, and $\varepsilon$. 
\begin{lemma}\label{l.1max1} 
Under the same conditions as in Proposition \ref{p.max},  for all $s>0$ 
 there exists $C > 0$ such that for all $0< \varepsilon < \frac{\pi}{8}$
\begin{equation}\label{e.1max1}
\frac{1}{\sigma (B_\varepsilon) } \int_{B_\varepsilon (x)  } (\vartheta(u,y))^{-s}  d\sigma(u )\leq C\varepsilon^{-s}
\end{equation}
\end{lemma}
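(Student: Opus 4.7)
The strategy is to split into two cases according to whether $y$ is near or far from $x$ relative to the averaging scale, with the threshold at $\vartheta(x,y) = 2\varepsilon$.

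\emph{Far case: $\vartheta(x,y) \ge 2\varepsilon$.} By the triangle inequality, for every $u \in B_\varepsilon(x)$ we have $\vartheta(u,y) \ge \vartheta(x,y) - \vartheta(u,x) \ge 2\varepsilon - \varepsilon = \varepsilon$. Hence the integrand is pointwise bounded by $\varepsilon^{-s}$ on $B_\varepsilon(x)$, and the averaged integral is at most $\varepsilon^{-s}$.

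\emph{Near case: $\vartheta(x,y) < 2\varepsilon$.} The triangle inequality now yields the inclusion $B_\varepsilon(x) \subset B_{3\varepsilon}(y)$, so I may enlarge the domain of integration. Since $\vartheta(u,y)^{-s}$ depends only on $\vartheta(u,y)$, applying \eqref{eq:Equivalence with 1 dim setting} together with the substitution $t = \cos \varphi$ in $d\nu^{(\alpha,\beta)}$ rewrites
$$\int_{B_{3\varepsilon}(y)} \vartheta(u,y)^{-s} \, d\sigma(u) \lesssim \int_0^{3\varepsilon} \varphi^{-s} \sin^{2\alpha+1}(\varphi/2) \cos^{2\beta+1}(\varphi/2) \, d\varphi.$$
On the compact interval $\varphi \in [0, 3\pi/8]$ the trigonometric factor is comparable to $\varphi^{2\alpha+1} = \varphi^{D-1}$ (since $D = 2\alpha + 2$) up to absolute constants, so the right-hand side is bounded by a constant multiple of $\int_0^{3\varepsilon} \varphi^{D-1-s} \, d\varphi \lesssim \varepsilon^{D-s}$. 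This is precisely the step where the assumption $s < D$ built into Proposition \ref{p.max} is required for integrability near the origin. An identical computation, applied with $s=0$, gives $\sigma(B_\varepsilon) \asymp \varepsilon^D$, so the ratio in the lemma is bounded by a constant multiple of $\varepsilon^{D-s}/\varepsilon^D = \varepsilon^{-s}$, as claimed.

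The argument is essentially the standard volume-growth / polar-coordinates calculation, and the only point to verify carefully is uniformity of the comparison $\sin^{2\alpha+1}(\varphi/2)\cos^{2\beta+1}(\varphi/2) \asymp \varphi^{D-1}$ over $\varphi \in [0, 3\pi/8]$, which is immediate since $\sin(\varphi/2)/\varphi$ and $\cos(\varphi/2)$ are bounded above and below by positive constants on that interval. The essential analytic input is the assumption $s < D$, which places the Riesz exponent in the locally integrable range for $\sigma$ on $\Omega$.
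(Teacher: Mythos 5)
Your proof is correct, but it takes a somewhat different route from the paper's. The paper handles the off-center singularity by first showing that $\int_{B_\varepsilon(x)}(\vartheta(u,y))^{-s}\,d\sigma(u)$ is maximized when $x=y$ (a swapping argument comparing $B_\varepsilon(x)\setminus B_\varepsilon(y)$ with $B_\varepsilon(y)\setminus B_\varepsilon(x)$, using that both sets have equal $\sigma$-measure and that the kernel exceeds $\varepsilon^{-s}$ on the latter), and then performs the one-variable computation in the variable $t$, via $\arccos t\ge\sqrt{2-2t}$, obtaining the bound $(1-\cos\varepsilon)^{-s/2+\alpha+1}$ for the numerator and $\sigma(B_\varepsilon)\gtrsim(1-\cos\varepsilon)^{\alpha+1}$ for the denominator. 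You instead split at $\vartheta(x,y)=2\varepsilon$ — which is precisely the dichotomy the paper reserves for the next step, Lemma \ref{l.2max1} — treat the far case trivially, and in the near case use the inclusion $B_\varepsilon(x)\subset B_{3\varepsilon}(y)$ followed by the polar-coordinate estimate $\sin^{2\alpha+1}(\varphi/2)\cos^{2\beta+1}(\varphi/2)\asymp\varphi^{D-1}$ on the relevant range (legitimate since $\alpha,\beta\ge-\tfrac12$ and $3\varepsilon<3\pi/8$), which is just the angular-variable form of the paper's computation; your application of \eqref{eq:Equivalence with 1 dim setting} to a ball is fine because the indicator of $B_{3\varepsilon}(y)$ is itself a function of $\cos\vartheta(u,y)$, exactly as in the paper's own use. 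Both arguments hinge on the same analytic inputs, namely $s<D$ for local integrability and the lower bound $\sigma(B_\varepsilon)\gtrsim\varepsilon^{D}$; the paper's swap argument buys a reduction to a centered ball of the same radius $\varepsilon$, while your version trades that for the harmless enlargement to radius $3\varepsilon$ and, as a side effect, partially duplicates the case analysis that reappears in the proof of Lemma \ref{l.2max1}.
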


\begin{proof}
We first observe that  the expression $\int_{B_\varepsilon (x) } F_s (\cos(\vartheta(u,y))) d\sigma(u )$ is maximized when $x=y$.  Indeed, this can be seen as follows:
\begin{align*}
\int_{B_\varepsilon (x) }(\vartheta(u,y))^{-s}   d\sigma(u ) & \le \varepsilon^{-s} \sigma \big( B_\varepsilon (x) \setminus B_\varepsilon (y) \big) +   \int_{B_\varepsilon (x) \cap  B_\varepsilon (y)  } (\vartheta(u,y))^{-s}  d\sigma(u ) \\ 
& =  \varepsilon^{-s} \sigma \big( B_\varepsilon (y) \setminus B_\varepsilon (x) \big) +   \int_{B_\varepsilon (x) \cap  B_\varepsilon (y)  } (\vartheta(u,y))^{-s}   d\sigma(u )\\ & \le \int_{B_\varepsilon (y) } (\vartheta(u,y))^{-s}  d\sigma(u). 
\end{align*} 



We have observed before  that $\arccos (t) \ge \sqrt{2-2t}$  and thus $F_s (t ) \lesssim \big( 1-t \big)^{-\frac{s}{2}}$, 
since $s> 0$.  Applying \eqref{eq:Equivalence with 1 dim setting}, we see that 
\begin{align*}
\int_{B_\varepsilon (x)} (\vartheta(u,x))^{-s}  d\sigma(u) &\lesssim   \int_{\cos{\varepsilon}}^1 (1-t)^{-\frac{s}{2}+\alpha}  (1+t)^\beta \,dt 
\lesssim (1-\cos{\varepsilon})^{-\frac{ s}{2}+ \alpha + 1},
\end{align*}
where we used the fact that $s < D = 2\alpha + 2$, i.e. $-\frac{s}{2}+\alpha   > -1$.  Observe also that  $\beta = -1/2$ in the case of $\Omega=\RP^d$, hence in  this case we need to bound $(1+t)$ from below, which can be done since we only consider $\varepsilon < \frac{\pi}{8}$.

We also have
\begin{equation}\label{eq:Lower Bound on Ball Volume}
\sigma (B_\varepsilon) \gtrsim   \int_{\cos{\varepsilon}}^1 (1-t)^{\alpha} (1+t)^\beta dt  \gtrsim (1-\cos{\varepsilon})^{\alpha + 1 }.
\end{equation}
Combining these, along with the fact that $1-\cos{\varepsilon} \ge \frac{\varepsilon^2}{4}$ for $\varepsilon < \frac{\pi}{8}$, we obtain
\begin{align*}
\frac{1}{\sigma (B_\varepsilon) } \int_{B_\varepsilon (x)  }  (\vartheta(u,y))^{-s} d\sigma(u) & \leq  \frac{1}{\sigma (B_\varepsilon) } \int_{B_\varepsilon (x)  }  (\vartheta(u,x))^{-s}  d\sigma(u ) \\
&  \lesssim  \frac{ (1-\cos{\varepsilon})^{-\frac{s}{2}+\alpha +1 }}{(1-\cos{\varepsilon})^{\alpha + 1 }} = (1- \cos \varepsilon)^{-\frac{s}{2}} \lesssim \varepsilon^{-s}.
\end{align*}
which proves Lemma \ref{l.1max1}. 
\end{proof}

We prove an analogous statement for the logarithmic kernel. 

\begin{lemma}\label{l.1max1log} 
There exists constant $C, A > 0$ such that   for all $0< \varepsilon < \frac{\pi}{8}$ 
\begin{equation}\label{e.1max1log}
\frac{1}{\sigma (B_\varepsilon) } \int_{B_\varepsilon (x)  } F_0 (\cos(\vartheta(u,y))) d\sigma(u )\leq A - C \log \varepsilon
\end{equation}
\end{lemma}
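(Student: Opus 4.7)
The plan is to follow the structure of the proof of Lemma~\ref{l.1max1}, with two modifications to account for (i) the logarithmic (rather than power) nature of the singularity and (ii) the fact that $F_0(\cos\vartheta(u,y)) = -\log \vartheta(u,y)$ can take negative values (when $\vartheta(u,y)>1$), which obstructs the monotone rearrangement step used there. I would split the argument based on the relative location of $x$ and $y$. If $\vartheta(x,y)\ge 2\varepsilon$, then by the triangle inequality $\vartheta(u,y)\ge \varepsilon$ for every $u\in B_\varepsilon(x)$, so $-\log\vartheta(u,y)\le -\log\varepsilon$ pointwise and the averaged integral is bounded by $-\log\varepsilon$ directly. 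If $\vartheta(x,y)<2\varepsilon$, then $B_\varepsilon(x)\subset B_{3\varepsilon}(y)$, and I would estimate
\begin{equation*}
\int_{B_\varepsilon(x)} -\log\vartheta(u,y)\,d\sigma(u) \;\le\; \int_{B_{3\varepsilon}(y)} \bigl(-\log\vartheta(u,y)\bigr)_+ d\sigma(u),
\end{equation*}
which recenters the integral at $y$ at the cost of discarding a non-positive piece.

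Having recentered at $y$, I would convert to a one-dimensional integral via \eqref{eq:Equivalence with 1 dim setting}. Using the elementary inequality $\arccos t \ge \sqrt{2-2t}$ from \eqref{eq:Fa IBP bound on arccos reciprocal}, the integrand is controlled by $-\tfrac12 \log(1-t) + O(1)$, reducing the problem to estimating
\begin{equation*}
\int_{\cos(3\varepsilon)}^1 \bigl(1-\log(1-t)\bigr)(1-t)^\alpha(1+t)^\beta\,dt.
\end{equation*}
Substituting $s=1-t$ (and using that $(1+t)^\beta$ is bounded on this range for $\varepsilon<\pi/8$, just as in Lemma~\ref{l.1max1}), this is dominated, up to constants, by $\int_0^{1-\cos 3\varepsilon} (1-\log s)\, s^\alpha\, ds$. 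A routine integration by parts gives the elementary estimate $\int_0^h (-\log s)\, s^\alpha\, ds \lesssim h^{\alpha+1}(1+|\log h|)$, and with $h=1-\cos 3\varepsilon \asymp \varepsilon^2$ the numerator is at most a constant multiple of $\varepsilon^{2\alpha+2}(1-C\log\varepsilon)$.

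Dividing by the lower bound $\sigma(B_\varepsilon)\gtrsim \varepsilon^{2\alpha+2}$ already recorded in \eqref{eq:Lower Bound on Ball Volume} produces the desired bound $A-C\log\varepsilon$ in both cases. The main (mild) obstacle relative to Lemma~\ref{l.1max1} is the sign issue: in the power case $s>0$ the integrand is positive, so one can freely rearrange onto $B_\varepsilon(y)$; for the log kernel this is not available, and one instead inflates the ball to $B_{3\varepsilon}(y)$ and passes to the positive part. The additive constant $A$ in the statement is precisely what is needed to absorb this inflation together with the bounded $(1+t)^\beta$ factor and the $O(1)$ piece coming from $-\log\arccos t \le -\tfrac12\log(1-t)+O(1)$.
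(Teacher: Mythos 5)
Your argument is correct, but the way you recenter the integral at $y$ differs from the paper. The paper's proof handles this in one stroke by the same rearrangement observation used in Lemma~\ref{l.1max1}: the quantity $\int_{B_\varepsilon(x)}F_0(\cos\vartheta(u,y))\,d\sigma(u)$ is maximized when $x=y$, after which \eqref{eq:Equivalence with 1 dim setting}, the bound $\arccos t\ge\sqrt{2-2t}$, and \eqref{eq:Lower Bound on Ball Volume} give $A-C\log\varepsilon$ exactly as in your second half. Note that the sign of $F_0$ is not actually an obstruction to that rearrangement: the comparison only uses that the kernel is a decreasing function of $\vartheta$, since on $B_\varepsilon(x)\setminus B_\varepsilon(y)$ one has $-\log\vartheta(u,y)\le-\log\varepsilon$ while on $B_\varepsilon(y)\setminus B_\varepsilon(x)$ one has $-\log\vartheta(u,y)\ge-\log\varepsilon$, and these two sets have equal $\sigma$-measure; whether $-\log\varepsilon$ itself is positive or negative plays no role. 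So the paper keeps the ball of radius $\varepsilon$ and needs no case split. Your workaround — splitting into $\vartheta(x,y)\ge 2\varepsilon$ (trivial pointwise bound by $-\log\varepsilon$) versus $\vartheta(x,y)<2\varepsilon$ (inflating to $B_{3\varepsilon}(y)$ and passing to the positive part) — is perfectly valid and yields the same conclusion with slightly worse constants absorbed into $A$ and $C$; it is essentially the decomposition the paper defers to Lemma~\ref{l.2max1log}. The subsequent one-dimensional estimate, including the substitution $s=1-t$, the bound $\int_0^h(-\log s)s^\alpha\,ds\lesssim h^{\alpha+1}(1+|\log h|)$, the boundedness of $(1+t)^\beta$ near $t=1$, and division by $\sigma(B_\varepsilon)\gtrsim\varepsilon^{2\alpha+2}$, matches the paper's computation.
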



\begin{proof} The proof is very similar. Just as in the proof of Lemma \ref{l.1max1} we note that the expression $ \int_{B_\varepsilon (x) } F_0 (\cos(\vartheta(u,y))) d\sigma(u )$ is maximized when  $x=y$. 
 
Again using that $\arccos t \ge \sqrt{2-2t}$  and therefore $F_0 (t ) \le -\frac{1}{2} \log(2-2t)$.  Applying \eqref{eq:Equivalence with 1 dim setting}, we see that
\begin{align*}
\int_{B_\varepsilon (x)} F_0(\cos(\vartheta(u,x)) d\sigma(u) &\lesssim  - \int_{\cos{\varepsilon}}^1 \big(\log 2  + \log(1-t) \big) (1-t)^{\alpha}  (1+t)^\beta \,dt \\ 
& \lesssim (1-\cos{\varepsilon})^{\alpha + 1} \Big( \frac{1}{\alpha + 1} - \log 2 - \log\big(1 - \cos \varepsilon  \big) \Big).
\end{align*}
Combining this with \eqref{eq:Lower Bound on Ball Volume} and the inequality $1-\cos{\varepsilon} \ge \frac{\varepsilon^2}{4}$, we obtain  
\begin{align*}
\frac{1}{\sigma (B_\varepsilon) } \int_{B_\varepsilon (x)  } F_0 (\cos(\vartheta(u,y))) d\sigma(u) 
& \lesssim  \frac{1}{\alpha + 1} - \log 2 - \log\big(1 - \cos \varepsilon \big) \\
&  \leq \frac{1}{\alpha + 1} + 3 \log 2 - 2 \log \varepsilon ,
\end{align*}
proving the lemma. 
\end{proof}

We now deduce a univariate version of Proposition \ref{p.max}.

\begin{lemma}\label{l.2max1} For all $0 <s< D$, 
there exists $C > 0$ such that for all $0< \varepsilon < \frac{\pi}{8}$
\begin{equation}\label{e.2max1}
\frac{1}{\sigma (B_\varepsilon) } \int_{B_\varepsilon (x)  }  (\vartheta(u,y))^{-s} d\sigma(u ) \leq C    (\vartheta(x,y))^{-s}
\end{equation}
\end{lemma}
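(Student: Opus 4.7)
The plan is a classical two-case dichotomy based on whether the ball $B_\varepsilon(x)$ is far from $y$ or contains points comparable in distance to $\vartheta(x,y)$. Let $r = \vartheta(x,y)$ and fix the threshold $r \gtrless 2\varepsilon$.

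\textbf{Far regime ($r \geq 2\varepsilon$).} Here I would use the triangle inequality directly: for every $u \in B_\varepsilon(x)$,
\begin{equation*}
\vartheta(u,y) \geq \vartheta(x,y) - \vartheta(x,u) \geq r - \varepsilon \geq \tfrac{r}{2},
\end{equation*}
so $(\vartheta(u,y))^{-s} \leq 2^s r^{-s}$ pointwise on $B_\varepsilon(x)$, and averaging preserves this bound.

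\textbf{Near regime ($r < 2\varepsilon$).} Here the singularity in $u$ can actually be felt, so pointwise control fails and we must use the averaging. Lemma \ref{l.1max1} gives
\begin{equation*}
\frac{1}{\sigma(B_\varepsilon)} \int_{B_\varepsilon(x)} (\vartheta(u,y))^{-s}\, d\sigma(u) \leq C\varepsilon^{-s}.
\end{equation*}
Since $r < 2\varepsilon$, we have $\varepsilon^{-s} < 2^s r^{-s}$, which upgrades this to $2^s C\, r^{-s}$.

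Combining the two regimes yields the desired inequality with constant $2^s \max\{1,C\}$. The only step that required real work was Lemma \ref{l.1max1}, which has already been established; the remainder is a routine splitting argument. I do not anticipate any serious obstacle here, since the integrability threshold $s < D$ is used only through Lemma \ref{l.1max1}, and the triangle inequality does the rest in the far regime.
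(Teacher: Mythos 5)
Your argument is correct and is essentially identical to the paper's own proof: the same dichotomy at $\vartheta(x,y) \gtrless 2\varepsilon$, the triangle inequality giving $\vartheta(u,y) \ge \tfrac12 \vartheta(x,y)$ in the far case, and Lemma \ref{l.1max1} combined with $\varepsilon^{-s} \le 2^s (\vartheta(x,y))^{-s}$ in the near case. No issues.
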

\begin{proof}
Assume first that $\vartheta(x,y)  > 2\varepsilon$. Then for all $u \in B_\varepsilon (x)$, we have 
$\vartheta (u,y) >  \vartheta (x,y) - \varepsilon > \frac12 \vartheta(x,y)$,  and in this case we obtain 
 \begin{align*}
 \frac{1}{\sigma (B_\varepsilon) } \int_{B_\varepsilon (x)  }  (\vartheta(u,y))^{-s}  d\sigma(u ) &\leq  \frac{1}{\sigma (B_\varepsilon) } \int_{B_\varepsilon (x)  }\left(\frac{\vartheta(x,y) }{2}\right)^{-s} d\sigma(u) = 2^{s} \big( \vartheta (x,y) \big)^{-s}.
\end{align*}

Now assume that  $\vartheta(x,y) \le 2 \varepsilon$. Then Lemma \ref{l.1max1} guarantees that
\[
	\frac{1}{\sigma (B_\varepsilon) } \int_{B_\varepsilon (x)  }  (\vartheta(u,y))^{-s} d\sigma(u ) \leq C\, \varepsilon^{-s} \leq C\left(\frac{\vartheta(x,y)}{2}\right)^{-s} ={C} {2^{s}}  (\vartheta(x,y))^{-s}.
\]
We have thus proved  the lemma in both cases. \end{proof}

The logarithmic case is  similar.

\begin{lemma}\label{l.2max1log} 
There exist constant $C, A > 0$ such that  for all $0< \varepsilon < \frac{\pi}{8}$,
\begin{equation}\label{e.2max1log}
\frac{1}{\sigma (B_\varepsilon) } \int_{B_\varepsilon (x)  } F_0 (\cos(\vartheta(u,y))) d\sigma(u ) \leq A +C   F_0 (\cos(\vartheta(x,y)))
\end{equation}
\end{lemma}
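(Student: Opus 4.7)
The plan is to mirror the two-case argument of Lemma \ref{l.2max1} in the logarithmic setting, with Lemma \ref{l.1max1log} playing the role that Lemma \ref{l.1max1} played in the power-law case. Recalling that $F_0(\cos \vartheta(u,y)) = -\log \vartheta(u,y)$, I will split according to whether $\vartheta(x,y) > 2\varepsilon$ or $\vartheta(x,y) \le 2\varepsilon$.

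In the first case, $\vartheta(x,y) > 2\varepsilon$, the triangle inequality gives $\vartheta(u,y) \ge \vartheta(x,y) - \varepsilon > \tfrac{1}{2}\vartheta(x,y)$ for every $u \in B_\varepsilon(x)$. Taking $-\log$ of both sides,
\[
-\log \vartheta(u,y) \;\le\; -\log\bigl(\tfrac{1}{2}\vartheta(x,y)\bigr) \;=\; \log 2 \,+\, F_0(\cos \vartheta(x,y)),
\]
and averaging over $u \in B_\varepsilon(x)$ preserves this pointwise bound, yielding \eqref{e.2max1log} in this range with $A = \log 2$ and $C = 1$.

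In the second case, $\vartheta(x,y) \le 2\varepsilon$, I apply Lemma \ref{l.1max1log} to bound the average by $A' - C' \log \varepsilon$ for some constants $A', C' > 0$. Since $\varepsilon \ge \tfrac{1}{2}\vartheta(x,y)$, monotonicity of the logarithm gives $-\log \varepsilon \le \log 2 - \log \vartheta(x,y) = \log 2 + F_0(\cos \vartheta(x,y))$, so
\[
A' - C'\log \varepsilon \;\le\; \bigl(A' + C' \log 2\bigr) \,+\, C' \, F_0(\cos \vartheta(x,y)),
\]
which is again of the desired form, after enlarging $A$ and $C$ if necessary to absorb both cases simultaneously.

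The only conceptual subtlety, and the reason the statement must carry the additive constant $A$ rather than the cleaner pointwise domination $f^{(\varepsilon)} \le C f$ available for $s>0$, is that $F_0$ takes negative values when $\vartheta(x,y)$ is close to $\pi$; thus one cannot simply multiply $F_0$ by a constant to swallow the $\log 2$ terms. Beyond this bookkeeping, no new ideas enter—the argument is a direct transcription of the power-law proof, with Lemma \ref{l.1max1log} substituted for Lemma \ref{l.1max1}, so I do not expect any substantive obstacle.
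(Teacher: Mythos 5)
Your proof is correct and follows essentially the same route as the paper: the same two-case split at $\vartheta(x,y)=2\varepsilon$, with the triangle-inequality bound $\vartheta(u,y)>\tfrac12\vartheta(x,y)$ in the far case and Lemma \ref{l.1max1log} combined with $\varepsilon\ge\tfrac12\vartheta(x,y)$ in the near case. The concluding remark about absorbing the two cases into common constants $A,C$ is fine, since $F_0$ is bounded below by $-\log\pi$ on $\Omega$.
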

\begin{proof}
For $\vartheta(x,y)  > 2\varepsilon$,  
 \begin{align*}
 \frac{1}{\sigma (B_\varepsilon) } \int_{B_\varepsilon (x)  } F_0 (\cos(\vartheta(u,y))) d\sigma(u ) &\leq  \frac{1}{\sigma (B_\varepsilon) } \int_{B_\varepsilon (x)  } - \log\left(\frac{\vartheta(x,y) }{2}\right) d\sigma(u)\\
&= \log 2 - \log(\vartheta(x,y))   = \log 2 + F_0 (\cos(\vartheta(x,y))),
\end{align*}
while for  $\vartheta(x,y) \le 2 \varepsilon$,  Lemma \ref{l.1max1log} guarantees that
\begin{align*}
\frac{1}{\sigma (B_\varepsilon) } \int_{B_\varepsilon (x)  } F_0 (\cos(\vartheta(u,y))) d\sigma(u )  & \leq A - C\log \varepsilon \leq A  - C \log\Big( \frac{\vartheta(x,y)}{2} \Big)\\
& = A + C \log 2 + C F_0(\cos(\vartheta(x,y))),
\end{align*}
and the claim  follows.
\end{proof}

Finally, we prove  Proposition \ref{p.max}. This simply amounts to applying Lemma \ref{l.2max1} or \ref{l.2max1log} twice in the cases that $s > 0$ or $s = 0$, respectively. In the former case, we see that
\begin{align*}
\frac{1}{\big(  \sigma (B_\varepsilon) \big)^2 }  \int_{B_\varepsilon(x)} \int_{B_\varepsilon (y )} &  (\vartheta(u,v))^{-s}  d\sigma (u) d\sigma (v)\\ 
&= \frac{1}{\sigma (B_\varepsilon) } \int_{B_\varepsilon (x)  }  \left(\frac{1}{\sigma (B_\varepsilon) } \int_{B_\varepsilon (y)  } (\vartheta(u,v))^{-s} d\sigma(v) \right)d\sigma(u) \\
&\leq \frac{1}{\sigma (B_\varepsilon) } \int_{B_\varepsilon (x)  } C \vartheta(u,y))^{-s} d\sigma(u) \leq C^2 (\vartheta(x,y))^{-s} ,
\end{align*}
and in the latter case, we have
\begin{align*}
\frac{1}{\big(  \sigma (B_\varepsilon) \big)^2 }  \int_{B_\varepsilon(x)} \int_{B_\varepsilon (y )} & F_0 (\cos(\vartheta(u,v)) )  d\sigma (u) d\sigma (v)\\ &= \frac{1}{\sigma (B_\varepsilon) } \int_{B_\varepsilon (x)  }  \left(\frac{1}{\sigma (B_\varepsilon) } \int_{B_\varepsilon (y)  } F_0 (\cos(\vartheta(u,v))) d\sigma(v) \right)d\sigma(u) \\
&\leq \frac{1}{\sigma (B_\varepsilon) } \int_{B_\varepsilon (x)  } \Big(A + C F_0 (\cos(\vartheta(u,y))) \Big) d\sigma(u) \\
& \leq A(1+C) + C^2 F_0 (\cos(\vartheta(x,y))),
\end{align*}
and thus Proposition \ref{p.max} is proved.

\subsection{Further discussion and generalizations}\label{s:further}
Since, as mentioned before, Theorem \ref{thm:uniform measure min, continuous} only applies to continuous kernels, additional considerations are required to show that positivity of the Jacobi coefficients implies that the uniform measure $\sigma$ minimizes the energy $I_f (\mu)$ for a singular function $f$. In all of the prior examples of this type found in the literature some ad hoc tricks have been used to this end, see the discussion after Theorem \ref{thm:uniform measure min, continuous}. To the best of our knowledge, there have been no general theorems which describe broad classes of singular kernels for which positivity of Jacobi coefficients  readily implies that $I_f (\mu)$ is minimized by $\sigma$. 
The arguments presented in  Sections \ref{s.abs}--\ref{s:A1proof} are rather flexible   and lend themselves  to generalizations, and we establish the first theorem of this type below, Theorem \ref{thm:general}, where the only condition imposed on $f$ is the order of its singularity.\\



First, we observe that the argument of \S\ref{s.abs} (absolutely continuous measures) applies to any integrable function which is bounded below, and so does the main statement of  \S\ref{s.all}, i.e. Theorem \ref{thm:sigma a minimizer for epsilon convergence}. 
In order to apply the latter  theorem, one needs to establish convergence in \eqref{eq:converge}. An important feature of the argument in \S\ref{s:A1proof}, allowing one to do so invoking the Lebesgue dominated convergence,   is the maximal inequality \eqref{e.max1} of Proposition \ref{p.max}, i.e. for the argument to hold, the kernel $f$ must satisfy the $A_1$ condition
\begin{equation}\label{e.A1}
\sup_{\varepsilon > 0} \frac{1}{\big(  \sigma (B_\varepsilon) \big)^2 } \int_{B_\varepsilon(x)} \int_{B_\varepsilon (y )} f ( \cos(\vartheta(u,v))) d\sigma (u) d\sigma (v) \le C f ( \cos(\vartheta(x,y)) ).
\end{equation}
If this is the case, as long as $f$ is continuous on $[-1,1)$, the previous argument can be repeated word for word.

Hence the question becomes: which kernels satisfy such an estimate? Of course, the proof of Proposition \ref{p.max}) would work almost verbatim, for example, for the powers of the chordal distance, but one can easily avoid having to prove this inequality from scratch. 

We  shall say that  non-negative functions $f$ and $g: [-1,1] \rightarrow \mathbb R$ are {\emph{equivalent}} on a set $K \subset [-1,1]$ if  for some $c'$, $c''>0$, the following inequality holds  
\begin{equation}\label{e.equiv}
c' g(t) \le f(t)  \le c'' g(t) \,\,\, \textup{ for all} \,\, t\in K. 
\end{equation}
It  is easy to see that if $f$ and $g$ are equivalent on $[-1,1]$, then whenever $f$ satisfies the $A_1$ condition \eqref{e.A1}, so does $g$  (with different constants). Since we have already established this condition for negative powers of the projective geodesic distance in Proposition \ref{p.max}, 
it must also hold for all kernels equivalent to these Riesz kernels. Moreover, it will suffice to establish the equivalence  just for   values of $t$ close to $1$. 

This leads to the following general statement for kernels which are singular at $t=1$, i.e. when $x=y$, in which the additional conditions on the singular kernel $f$ are fairly easy to check. 

\begin{theorem}\label{thm:general}
Assume that   a non-negative function  $f \in L^1([-1,1], \nu^{(\alpha, \beta)})$ is continuous for all $t\in [-1,1)$. Assume further that  the function $f$ is equivalent to  $(\arccos t)^{-s}$ for some $s \in (0,D)$ or to $ - \log ( \arccos t )$ on the interval $t\in [t_0,1]$  for some $t_0 \ge 3/4$ , in other words $f(\cos(\vartheta(x,y)))$ is equivalent to the negative $s^{th}$ powers of the geodesic distance $\vartheta(x,y)$ on $\Omega$ or to the negative logarithm of the distance whenever $\vartheta (x,y) \leq \arccos  t_0$.

If the Jacobi coefficients of $f$ are non-negative, i.e. $\widehat{f}_n \ge 0$ for all $n\ge 1$,  then the uniform measure $\sigma$ minimizes $I_f (\mu)$ among all Borel probability measures (and $\sigma$ is a unique minimizer if $\widehat{f}_n > 0$ for all $n\ge 1$).
\end{theorem}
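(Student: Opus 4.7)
The plan is to invoke Theorem \ref{thm:sigma a minimizer for epsilon convergence}, which reduces the claim to verifying that $I_{f^{(\varepsilon)}}(\mu) \to I_f(\mu)$ as $\varepsilon \to 0^+$ for every Borel probability measure $\mu$ with $I_f(\mu) < \infty$, since nonnegativity (or strict positivity) of the Jacobi coefficients is in our hypothesis. Fix such a $\mu$; because $f$ is singular at $t = 1$, finiteness of $I_f(\mu)$ forces $\mu$ to be atomless, so $(\mu \times \mu)(\{x = y\}) = 0$. Continuity of $f$ on $[-1,1)$ gives $f^{(\varepsilon)}(\cos \vartheta(x,y)) \to f(\cos \vartheta(x,y))$ off the diagonal, i.e.\ $(\mu \times \mu)$-a.e. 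Thus the entire task reduces to producing a $(\mu \times \mu)$-integrable majorant for $f^{(\varepsilon)}$ uniformly in $\varepsilon$, whereupon dominated convergence finishes the job.

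To build the majorant, let $g(t)$ denote whichever of $(\arccos t)^{-s}$ or $-\log(\arccos t)$ is equivalent to $f$ on $[t_0, 1]$; note $g \ge 0$ on $[t_0,1]$ since $t_0 \ge 3/4 > \cos 1$. The hypothesized equivalence gives $f \le c'' g$ on $[t_0,1]$, while continuity of $f$ on the compact set $[-1, t_0]$ yields a uniform bound $f \le M$ there. After possibly enlarging $M$ to absorb the bounded (and possibly negative) contribution of $g$ on $[-1, t_0]$, we obtain the global estimate $f(t) \le c'' g(t) + M$ on $[-1,1]$, and since the averaging operator $h \mapsto h^{(\varepsilon)}$ is monotone, this transfers to $f^{(\varepsilon)} \le c'' g^{(\varepsilon)} + M$. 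Proposition \ref{p.max} then supplies constants $C_0$ and $A_0$ (with $A_0 = 0$ in the power case) such that
\begin{equation*}
g^{(\varepsilon)}(\cos \vartheta(x,y)) \le C_0\, g(\cos \vartheta(x,y)) + A_0
\end{equation*}
uniformly in $\varepsilon > 0$ and $x, y \in \Omega$.

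To convert this into a bound involving $f$ itself, I use the reverse direction of the equivalence, $g \le \frac{1}{c'} f$ on $[t_0, 1]$, and the boundedness of $g$ on $[-1, t_0]$. Splitting according to whether $\cos \vartheta(x,y) \in [t_0,1]$ or $\cos \vartheta(x,y) \in [-1,t_0]$ and combining all the estimates yields constants $C_1, C_2$ (independent of $\varepsilon$ and of $x, y$) with
\begin{equation*}
f^{(\varepsilon)}(\cos \vartheta(x,y)) \le C_1\, f(\cos \vartheta(x,y)) + C_2.
\end{equation*}
Since $\mu$ is a probability measure and $I_f(\mu) < \infty$, the right-hand side is $(\mu \times \mu)$-integrable, so Lebesgue's theorem delivers \eqref{eq:converge}, and Theorem \ref{thm:sigma a minimizer for epsilon convergence} concludes, giving uniqueness automatically when all $\widehat{f}_n > 0$. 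The only mildly delicate point is the bookkeeping for the logarithmic kernel, where $-\log \arccos t$ changes sign near $t = -1$ and $g^{(\varepsilon)}$ carries an additive constant in Proposition \ref{p.max}; both issues are resolved by the additive constants $M, A_0, C_2$ above, so the argument is essentially a clean assembly of the ingredients already developed in Sections \ref{s.abs}–\ref{s:A1proof}.
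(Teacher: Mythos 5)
Your proposal is correct and follows essentially the same route as the paper: transfer the $A_1$ maximal bound of Proposition \ref{p.max} to $f$ via the assumed equivalence (the paper does this by adding a constant so the equivalence becomes global, while you track the additive constants explicitly, which is the same bookkeeping), then apply dominated convergence to get \eqref{eq:converge} and conclude with Theorem \ref{thm:sigma a minimizer for epsilon convergence}. No gaps.
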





\begin{proof}
Without loss of generality, we can assume that the equivalence holds on the whole interval $[-1,1]$. Indeed, 
if $f$ has zeros on $[-1,t_0]$, then  adding a positive constant to $f$ makes it bounded above and below on  $[-1,t_0]$ and thus equivalent to the Riesz kernel  on $[-1,1]$ (for $s=0$ the logarithmic kernel also needs to be changed by a constant to ensure positivity). Obviously adding a constant to the kernel doesn't affect energy minimizers. 

Due to Proposition \ref{p.max} and the equivalence, the function $f$ satisfies the $A_1$ inequality \eqref{e.A1}. Therefore, as in the proof of Proposition \ref{p.max}, we can  invoke the Lebesgue dominated convergence theorem to show that $I_{f^{(\varepsilon)}} (\mu) \rightarrow I_f (\mu)$ as $\varepsilon \rightarrow 0^+$ whenever $I_f (\mu ) < \infty$, and the statement follows from Theorem \ref{thm:sigma a minimizer for epsilon convergence}.
\end{proof}

Thus the only condition, in addition to the positivity of Jacobi coefficients, which is needed to ensure that the energy $I_f$ is minimized by the uniform measure $\sigma$, is the  order of singularity as $x$ approaches $y$ in terms of  the geodesic distance, or chordal/Euclidean distance, since these metrics are equivalent. We also note that in our approach we do need equivalence, i.e.  two-sided inequalities  in \eqref{e.equiv}.

Finally we would like to point out another well-known class of (singular) kernels for which it is known that the uniform measure $\sigma$ minimizes the energy on the sphere: kernels of the type $K(x,y) = f(\| x-y\|^2)$, where $f$ is strictly completely monotone, i.e. $(-1)^k f^{(k)} (t) > 0 $ for $k\in \mathbb N$ and $t>0$, and $f(0) = f (0+)$ (the statement follows from the fact that such kernels are positive definite, see Sections 4.4.2 and 4.4.3 of \cite{BorHS}). However, geodesic Riesz kernels don't fall into this category. Moreover, in Theorem \ref{thm:general} we do not assume any amount of smoothness of the kernel and rely only on different natural assumptions, i.e. Theorem \ref{thm:general} is clearly different and independent  from the aforementioned fact.


\section{The one-dimensional case}\label{s.d1}

In this section we shall further explore the one-dimensional case, i.e. the energy on $\mathbb{FP}^1$. We shall give an alternative proof of the case $d=1$ of Theorem \ref{t.geodriesz}, and consequently, of  Theorem  \ref{t.d1}. While these theorems have already been proved for all $d\ge 1$ in the  previous sections, we feel that the approach presented here better explains the nature of the case $d=1$. Moreover, we shall  obtain a better understanding of the minimizers in the case $s=-1$ and  discuss the original (discrete) Fejes T\'oth conjecture on $\mathbb S^1$.  The most important observation, on which the arguments of this section are based, is the fact that {\emph{the metric spaces $\mathbb S^D$ and $\mathbb{FP}^1$ with their respective geometric distances are isometric}}, where $D = \dim_{\mathbb{R}} \mathbb{F}$. Given that the geodesic distance Riesz energy on the sphere $\mathbb S^d$ is well understood in all dimensions $d\ge 1$, see \cite{BilD}, the results for the one-dimensional projective spaces follow immediately.\\


The close relationship between $\mathbb{FP}^1$ and $\mathbb{S}^D$ is well known (for example, \cite[p. 170]{Baez}), but it is not easy to find an explicitly stated example of an  isometry (most sources only discuss the homeo- or diffeomorphism). For the reader's convenience we shall briefly present it here. 

\vskip3mm

\vskip1cm 

\par
On the sphere we represent points $x\in \mathbb{S}^D$ by unit vectors in $\mathbb{R}^{D+1}$ and  express the geodesic distance on $\mathbb{S}^D$ as  
\[
\eta (x_1,x_2) = \arccos( x_1 \cdot x_2).
\]
On the projective line we represent points $\Pi \in \mathbb{FP}^1$ as $2\times 2$ projection matrices with entries in $\mathbb F$. 
A matrix $\Pi$ is a projection matrix if it is a Hermitian matrix which satisfies $\Pi^2 = \Pi$ and $\Tr \Pi = 1$. The geodesic distance on $\mathbb{FP}^1$ can then be expressed as
\[
\vartheta(\Pi_1,\Pi_2) = \arccos (2\langle \Pi_1,\Pi_2\rangle - 1), \,\,\,
\textup{ where } 
\,\,\,
\langle \Pi_1,\Pi_2 \rangle = \frac{1}{2} \Tr \left(\Pi_1\Pi_2 + \Pi_2\Pi_1\right) =  \Re \Tr \Pi_1\Pi_2.
\]
\begin{remark}\label{r.f2rep} When $\mathbb F \neq \mathbb O$, the projective line  $\mathbb {FP}^1$  can also be defined as the set of  equivalence classes of unit vectors in $\mathbb{F}^2$, where $(x,y) \sim (x',y')$ if $(x,y) = (x'\alpha,y'\alpha)$ for some $\alpha \in \mathbb{F}$ with $||\alpha|| = 1$. In this case, if $\Pi_1,\Pi_2$ are the projection matrices corresponding to $(x_1,y_1),(x_2,y_2)$, then \[
\langle \Pi_1,\Pi_2 \rangle = |\langle (x_1,y_1),(x_2,y_2)\rangle|^2 = |x_1\overbar{x_2}+y_1\overbar{y_2}|^2 .
\]
(for more details, see \cite{CohKM}). 
However, this construction fails when $\mathbb{F} = \mathbb{O}$, since we no longer have associativity. Hence projection matrices must be used for the general case.
\end{remark}

Following the exposition in  \cite{Baez}, all projection matrices can  be expressed as
\[
\Pi = \begin{pmatrix} \overbar{x}x & \overbar{x}y \\ \overbar{y}x & \overbar{y}y \end{pmatrix}
\]
where $x,y \in \mathbb F$ with $||x||^2 + ||y||^2 = 1$.
Conversely,  any nonzero $(x,y) \in \mathbb {F}^2$, after normalization,  produces a projection matrix.   This leads one to defining  equivalence classes  by letting $(x,y) \sim (x',y')$ if both pairs result in the same projection matrix, i.e. an element of $\mathbb{FP}^1$. Denote the equivalence class of $(x,y)$ by $[(x,y)]$. These equivalence classes generalize the ones defined in Remark \ref{r.f2rep} and $\mathbb{FP}^1$ can be  equivalently defined as the set of these equivalent classes.  
We are now ready to present an explicit isometry between $\mathbb{FP}^1$ and $\mathbb S^D$.
\begin{lemma}
For $x \in \mathbb F$, we write $x = (x_0,...,x_{D-1})$, with $\Re x = x_0$. Define $\tau: \mathbb{FP}^1 \to \mathbb S^D$ by
\begin{align*}
	\tau([(1,x)]) &= \frac{1}{1+||x||^2} \left(1-||x||^2,2x_0,2x_1,...,2x_{D-1}\right)\\
	\tau([(0,1)]) &= \left(-1,0,0,...,0\right)
\end{align*}
Then $\tau$ is an isometry from $\mathbb{FP}^1$ to $\mathbb{S}^D$ with their respective geodesic distances.
\end{lemma}
\begin{proof}
Since $\mathbb{FP}^1 = \{[(1,x)]| x\in \mathbb{F}\} \cup \{[(0,1)]\}$, see \cite{Baez}, it is easy to see that $\tau$ is a well-defined bijection. We can now prove that it is an isometry,
\begin{align*}
\cos(\vartheta([(1,x)],[(1,y)])) &= 2\left(\Re \Tr \frac{1}{(1+||x||^2)(1+||y||^2)}\begin{pmatrix} 1 & x\\ \overbar{x} & \overbar{x}x\end{pmatrix}\begin{pmatrix} 1 & y\\ \overbar{y} & \overbar{y}y\end{pmatrix}\right)-1\\
&= \frac{2}{(1+||x||^2)(1+||y||^2)}\left(\Re (1+x\overbar{y}+\overbar{x}y+\overbar{x}x\overbar{y}y)\right) - 1\\
&= 2\frac{1+||x||^2||y||^2+ 2\sum_{i=0}^{D-1} x_iy_i}{(1+||x||^2)(1+||y||^2)}-1\\
&= \frac{(1-||x||^2)(1-||y||^2) + 4\sum_{i=0}^{D-1} x_iy_i}{(1+||x||^2)(1+||y||^2)} = \cos(\eta(\tau([(1,x)]),\tau([1,y])))
\end{align*}
and
\begin{align*}
\cos(\vartheta([(1,x)],[(0,1)])) &=  2\left(\Re \Tr \frac{1}{(1+||x||^2)}\begin{pmatrix} 1 & x\\ \overbar{x} & \overbar{x}x\end{pmatrix}\begin{pmatrix} 0 & 0\\ 0 & 1\end{pmatrix}\right)-1\\
&= \frac{2||x||^2}{1+||x||^2}-1 = \frac{||x||^2-1}{1+||x||^2} = \cos(\eta (\tau([(1,x)]),\tau([0,1]))).
\end{align*}
Finally, $\cos(\vartheta([(0,1)],[(0,1)])) = 1 = \cos(\eta(\tau([(0,1)]),\tau([0,1])))$.
\end{proof}

\begin{remark} Observe that in the case of $\Omega = \mathbb{RP}^1$, the line corresponding to $(1,x)$ has direction  $\theta \in [-\pi/2,\pi/2)$ with $\cos \theta = \frac{1}{\sqrt{1+x^2}}$. Thus $\tau ([(1,x)]) = \left(\frac{1-x^2}{1+x^2}, \frac{2x}{1+x^2} \right) = (\cos 2\theta, \sin 2\theta)$. Hence, the isometry between $\mathbb{RP}^1$ and $\mathbb S^1$   essentially amounts to doubling the angle, which can be seen already from \eqref{e.thetabeta},  and the isometries are similar in nature in the other cases. 
\end{remark}


With this isometry at hand, the known results about the geodesic distance Riesz energy on the sphere can be translated to statements about the geodesic Riesz energy on $\mathbb{FP}^1$. We recall that Theorem 1.1 of \cite{BilD}  states that on the sphere $\mathbb S^D$ the minimizers of the geodesic Riesz energy are characterized as follows:
\begin{itemize}
\item $-1<s<D$: the uniform measure $\sigma$,
\item $s=-1$:   centrally symmetric measures, i.e. Borel probability measures $\mu $  on  $\mathbb S^d$ satisfying  $\mu (E) = \mu (-E)$ for all Borel sets $E\subset \mathbb S^D$,
\item $s<-1$: all measures of the form $\mu = \frac12 \big( \delta_p + \delta_{-p}\big)$ for $p \in \mathbb S^d$.
\end{itemize}
It remains to understand the pull-backs of these measures under the isometry. Since both on $\mathbb S^D$ and on $\mathbb{FP}^1$ the uniform measures $\sigma$ are the only measures invariant under  isometries of the space, the pull-back of a uniform measure is again uniform.


At the phase transition $s = -1$, the minimizers on the sphere are  centrally symmetric measures. 
The  corresponding condition on projective spaces is \emph{orthogonal symmetry}. A measure $\mu$ is orthogonally symmetric if for all Borel $E \subseteq \mathbb{FP}^1$, $\mu(E) = \mu(E^\perp)$, where
\[
E^\perp = \{x  \in \mathbb{FP}^1 | \vartheta(x,y) = \pi \text{ for some } y \in  E \},
\]
i.e. the set of points a diameter away from a point in $E$. 

We call these measures orthogonally symmetric because  in the case $\mathbb F = \mathbb R$, $\mathbb C$, or $\mathbb H$, the condition that two elements of $\mathbb{FP}^d$ are a diameter apart is equivalent to saying that their representers in $\mathbb F^{d+1}$ (equivalently, the  corresponding lines in $\mathbb F^{d+1}$) are orthogonal. At the same time, being a diameter apart on the sphere corresponds to opposite points. Hence, central symmetry on the sphere $\mathbb S^D$ is translated into orthogonal symmetry on $\mathbb{FP}^1$.  

In particular, when $\mathbb{F} \neq \mathbb O$,  two antipodal points on $\mathbb S^D$ correspond to points in $\mathbb{FP}^1$ represented by two orthogonal ``lines'' (one-dimensional subspaces) in $\mathbb{F}^2$, i.e. an orthonormal basis of $\mathbb{F}^2$.   It is obvious from the definition  that the measure $\mu = \frac{1}{2}( \delta_p  + \delta_q)$ with $\vartheta(p,q) = \pi$ (i.e., for $\mathbb{F} \neq \mathbb O$, the representers of $p$ and $q$ in $\mathbb F^2$ are orthogonal) is orthogonally symmetric. 


 
This discussion shows that  Theorem 1.1 of \cite{BilD} can be restated as follows.
\begin{theorem}\label{t.d1expand}
For $\Omega = \mathbb{FP}^1$, the minimizers of the geodesic distance energy integral $I_{F_s} (\mu)$ are characterized by:
\begin{enumerate}[(i)]
\item\label{i} $-1 < s < D$: The unique minimizer of $I_{F_s} (\mu)$ is $\mu = \sigma$. 
\item\label{ii} $s = -1$:   The energy $I_{F_s} (\mu)$ is maximized if and only if $\mu$ is orthogonally symmetric.
\item\label{iii}  $s > -  1$: The energy  $I_{F_s} (\mu)$ is maximized if and only if  $\mu = \frac{1}{2}( \delta_p  + \delta_q)$, where $\vartheta(p,q) = \pi$. 
\end{enumerate}
\end{theorem}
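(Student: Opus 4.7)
The plan is to exploit the isometry $\tau:\mathbb{FP}^1\to\mathbb{S}^D$ constructed above to transfer the complete characterization of geodesic Riesz energy minimizers on $\mathbb{S}^D$ from \cite[Theorem 1.1]{BilD} to $\mathbb{FP}^1$. Since $\tau$ is an isometry with $\vartheta(x,y)=\eta(\tau(x),\tau(y))$, the kernel $F_s$ is preserved, and for any Borel probability measure $\mu$ on $\mathbb{FP}^1$ with pushforward $\tau_\ast\mu$ on $\mathbb{S}^D$ we have
\begin{equation*}
I_{F_s}^{\mathbb{FP}^1}(\mu) \;=\; \int\!\!\int F_s(\cos\vartheta(x,y))\,d\mu(x)d\mu(y) \;=\; \int\!\!\int F_s(\cos\eta(u,v))\,d(\tau_\ast\mu)(u)d(\tau_\ast\mu)(v) \;=\; I_{F_s}^{\mathbb{S}^D}(\tau_\ast\mu).
\end{equation*}
Because $\tau$ is a bijection of the underlying spaces, $\mu\mapsto\tau_\ast\mu$ is a bijection of $\mathcal{P}(\mathbb{FP}^1)$ with $\mathcal{P}(\mathbb{S}^D)$, and therefore $\mu$ minimizes $I_{F_s}$ on $\mathbb{FP}^1$ if and only if $\tau_\ast\mu$ minimizes $I_{F_s}$ on $\mathbb{S}^D$. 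The task then reduces to identifying, under $\tau^{-1}$, the minimizers listed in \cite[Theorem 1.1]{BilD}.

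For part \eqref{i}, the known minimizer on $\mathbb{S}^D$ (for $-1<s<D$) is the uniform measure $\sigma_{\mathbb{S}^D}$. Both uniform measures $\sigma_{\mathbb{S}^D}$ and $\sigma_{\mathbb{FP}^1}$ are characterized (up to normalization) as the unique Borel probability measures invariant under all isometries of their respective spaces; since $\tau$ conjugates the isometry groups, $\tau_\ast\sigma_{\mathbb{FP}^1}=\sigma_{\mathbb{S}^D}$, which gives part \eqref{i}.

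For parts \eqref{ii} and \eqref{iii}, the key translation step is the identification $\tau^{-1}(-u)=\tau^{-1}(u)^{\perp}$ for every $u\in\mathbb{S}^D$. This follows from the fact that on $\mathbb{S}^D$ two points satisfy $\eta(u,v)=\pi$ precisely when $v=-u$, while on $\mathbb{FP}^1$ the condition $\vartheta(x,y)=\pi$ is precisely what defines $y\in\{x\}^\perp$; by the isometry property these notions correspond. Consequently a measure $\nu$ on $\mathbb{S}^D$ is centrally symmetric, i.e.\ $\nu(E)=\nu(-E)$ for all Borel $E$, if and only if $\tau^{-1}_\ast\nu$ on $\mathbb{FP}^1$ satisfies $\mu(F)=\mu(F^\perp)$ for all Borel $F$, i.e.\ is orthogonally symmetric. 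This yields part \eqref{ii}. Finally, for $s<-1$ the minimizers on $\mathbb{S}^D$ are exactly the two-point measures $\tfrac12(\delta_u+\delta_{-u})$; pulling back and using the identification above, these correspond to $\tfrac12(\delta_p+\delta_q)$ with $\vartheta(p,q)=\pi$, giving part \eqref{iii} (note the hypothesis in \eqref{iii} should read $s<-1$).

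The only place where care is required is verifying the antipode-to-orthogonal-complement correspondence from the explicit formula for $\tau$: one checks directly from the definition that $\cos\eta(\tau(x),\tau(y))=-1$ forces $2\langle\Pi_x,\Pi_y\rangle-1=-1$, i.e.\ $\Re\Tr(\Pi_x\Pi_y)=0$, which is precisely the defining condition $\vartheta(x,y)=\pi$ on $\mathbb{FP}^1$. Everything else in the argument is formal pushforward. I expect no serious obstacles; the one subtlety worth flagging is that in the octonionic case $\mathbb{OP}^1$ one cannot use the $\mathbb{F}^2$-representation from Remark \ref{r.f2rep}, so the identification of $\{x\}^\perp$ must be read off from the projection-matrix model, which is precisely the form in which the computation above is phrased.
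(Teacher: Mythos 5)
Your proposal is correct and follows essentially the same route as the paper: transfer \cite[Theorem 1.1]{BilD} through the explicit isometry $\tau$, identifying $\sigma$ with $\sigma$ via isometry-invariance, antipodal pairs with diameter-separated pairs, and central symmetry with orthogonal symmetry. Your added remark that part \eqref{iii} should read $s<-1$ is also consistent with the intended statement.
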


Parts \eqref{i} and \eqref{ii} of Theorem \ref{t.d1expand} give an alternative proof of and refine  the case $d=1$  of Theorem \ref{t.geodriesz} (the case of $s=-1$ follows from \eqref{ii} since the uniform measure on $\mathbb{FP}^1$ is easily seen to be orthogonally symmetric).\\

%

\par

The case  $\mathbb{F} = \mathbb{R}$ relates to the original Fejes T\'oth conjecture for $d=1$. Indeed, one just needs to  reformulate these results on $\mathbb S^1$ using representers of the points of $\mathbb{RP}^1$ in $\mathbb S^1$  and the original kernel $\theta (x,y)$, i.e. the acute angle between lines in $\mathbb R^2$, as defined in \eqref{e.theta}, which is up to constant just the geodesic distance on $\mathbb{RP}^1$ as shown in \eqref{e.rhotheta}. 

Under this interpretation and recalling that $s=-\lambda$, Theorem \ref{t.d1expand} for $\mathbb F = \mathbb R$  is exactly   Theorem  \ref{t.d1} (with uniqueness understood up to central symmetries). Since, as discussed above, the measures in part \eqref{iii} are orthogonally symmetric and the representers of $p$ and $q$ correspond to an orthonormal basis on $\mathbb R^2$, part \eqref{ii} of Theorem \ref{t.d1expand}  shows that $\mu_{ONB}$ maximizes \eqref{e.ift}, i.e. it gives an alternative proof  that the integral Fejes T\'oth conjecture (Conjecture \ref{c.IFT}) holds on $\mathbb S^1$ \cite{BilM}. \\




\par

The discrete version of both  the    geodesic distance Riesz energy on $\mathbb{S}^1$ \cite{Jia} and the Fejes T\'oth conjecture (Conjecture \ref{c.FT})  on $\mathbb S^1$  (i.e. geodesic  Riesz energy on $\mathbb{RP}^1$) \cite{BilM,FodVZ,Pet} have previously been studied and the optimizers have been completely characterized for all $N \in \mathbb N$ in both settings.  The isometry between $\mathbb{RP}^1$ and $\mathbb S^1$  provides a direct correspondence between these two families of point configurations, in other words, the two problems are really the same -- it appears that this connection has been overlooked in the literature.

Interestingly, the proof of the case $s = -1$ for the geodesic distance on $\mathbb S^d$  (in both integral and discrete setting) given in \cite{BilDM} uses a version  the so-called Stolarsky principle \cite{Sto}, which relates the discrepancy with respect to hemispheres to the geodesic distance Riesz energy. Similarly,  one of the proofs of the Fejes T\'oth conjecture on the circle $\mathbb S^1$ \cite{BilM} employs a version of the Stolarsky principle for the discrepancy with respect to unions of opposite quadrants (i.e. two opposite arcs of length $\pi/2$). Observe that under the isometry described above these shapes are exactly the preimages of hemispheres on $\mathbb S^1$ (semicircles). \\

Finally we remark that part \eqref{ii} of Theorem \ref{t.d1expand} implies that the  kernel $F_{-1} (\cos (\vartheta (x,y))) = -\vartheta (x,y)$ is  conditionally positive definite (albeit not strictly) on $\mathbb{FP}^1$, due to Theorem \ref{thm:uniform measure min, continuous}, since $\sigma$ is orthogonally symmetric and hence minimizes $I_{F_{-1}}$ (albeit not uniquely).  This property is particular to the case $d=1$.

\begin{lemma}\label{l.lambda<1}
If $\Omega = \mathbb{FP}^d$ with $d\ge 2$, then the uniform measure $\sigma$  is not a minimizer of $I_{F_{-1}} (\mu)$ on $\Omega$. Moreover, for any $\varepsilon >0$, it cannot be a minimizer of $I_{F_{s}}$ simultaneously for all $s \in (-1,-1+\varepsilon)$.
\end{lemma}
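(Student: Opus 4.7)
My plan is to exhibit a specific test measure beating $\sigma$ at $s=-1$ and then upgrade this single-point strict inequality to a right-neighborhood of $-1$ using continuity of $I_{F_s}$ in $s$.

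For the test measure I would take $\mu_{ONB}=\frac{1}{d+1}\sum_{i=1}^{d+1}\delta_{e_i}$, the uniform measure on any $d+1$ points of $\mathbb{FP}^d$ represented by mutually orthogonal lines in $\mathbb{F}^{d+1}$. Since distinct such points sit at the maximal geodesic distance $\pi$, one has $I_{F_{-1}}(\mu_{ONB}) = -\tfrac{d\pi}{d+1}$. On the other hand $F_{-1}(\cos\vartheta)=-\vartheta$, and combining \eqref{eq:Equivalence with 1 dim setting} with the substitution $t=\cos(2\phi)$ identifies
$$I_{F_{-1}}(\sigma) = -\frac{2\int_0^{\pi/2}\phi\,\sin^{2\alpha+1}\phi\cos^{2\beta+1}\phi\,d\phi}{\int_0^{\pi/2}\sin^{2\alpha+1}\phi\cos^{2\beta+1}\phi\,d\phi},$$
while an elementary Beta-integral computation gives $\tfrac{d\pi}{d+1} = \tfrac{\pi(\alpha+1)}{\alpha+\beta+2} = \pi\,E[\sin^2\Phi]$, where $\Phi\in[0,\pi/2]$ has density proportional to $\sin^{2\alpha+1}\phi\cos^{2\beta+1}\phi$. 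So the desired inequality $I_{F_{-1}}(\sigma) > I_{F_{-1}}(\mu_{ONB})$ reduces to
$$\int_0^{\pi/2}\bigl(\phi - \tfrac{\pi}{2}\sin^2\phi\bigr)\sin^{2\alpha+1}\phi\cos^{2\beta+1}\phi\,d\phi \;<\; 0.$$

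The heart of the argument is a folding trick for this integral, which I expect to be the only step requiring real care. Put $h(\phi) = \phi - \tfrac{\pi}{2}\sin^2\phi$; one checks that $h$ vanishes at $\phi = 0,\pi/4,\pi/2$, is strictly positive on $(0,\pi/4)$, and satisfies $h(\pi/2 - \phi) = -h(\phi)$. Applying the substitution $\phi\mapsto \pi/2 - \phi$ to the piece over $[\pi/4,\pi/2]$ (which swaps $\sin$ with $\cos$) rewrites the integral above as
$$\int_0^{\pi/4} h(\phi)\,\sin^{2\beta+1}\phi\cos^{2\beta+1}\phi\,\bigl[\sin^{2(\alpha-\beta)}\phi - \cos^{2(\alpha-\beta)}\phi\bigr]\,d\phi.$$
For $\mathbb{FP}^d$ with $d\geq 2$ we have $\alpha - \beta = (d-1)(\beta+1) > 0$ and $2\beta+1\geq 0$, so on $(0,\pi/4)$ the bracket is strictly negative while $h$ is strictly positive, making the integrand strictly negative. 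This is precisely where the hypothesis $d\geq 2$ enters essentially, and it completes the proof of the first assertion.

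For the ``moreover'' statement, I would rely on continuity of $s\mapsto I_{F_s}(\mu)$ in a neighborhood of $s=-1$ for both $\mu = \mu_{ONB}$ (immediate from the closed form $I_{F_s}(\mu_{ONB}) = \tfrac{d\,\pi^{-s}}{s(d+1)}$) and $\mu = \sigma$ (from \eqref{eq:Equivalence with 1 dim setting} and the dominated convergence theorem, since $(s,t)\mapsto \tfrac{1}{s}(\arccos t)^{-s}$ is jointly continuous and uniformly bounded on $[-1-\delta,-1+\delta]\times[-1,1]$ for small $\delta>0$). The strict inequality $I_{F_{-1}}(\mu_{ONB})<I_{F_{-1}}(\sigma)$ then persists on some right-neighborhood $(-1,-1+\varepsilon_0)$, so $\sigma$ fails to minimize $I_{F_s}$ at every $s$ in that interval, which is stronger than the stated conclusion.
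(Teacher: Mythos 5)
Your proposal is correct, and it takes a genuinely different route from the paper. The paper proves the first assertion structurally: by Theorem \ref{thm:uniform measure min, continuous} it suffices to show that $F_{-1}(t)=-\arccos t$ is not conditionally positive definite, i.e.\ that some Jacobi coefficient with $n\ge 1$ is negative; this is quoted from Gangolli's computation (or obtained by restricting to an embedded copy of $\mathbb{FP}^2$ and exhibiting a negative coefficient there, e.g.\ the second one for $\mathbb{RP}^2$), and the ``moreover'' part follows by a continuity/contradiction argument along sequences $s_j\to -1$. You instead exhibit the explicit competitor $\mu_{ONB}$ and verify $I_{F_{-1}}(\mu_{ONB})<I_{F_{-1}}(\sigma)$ by reducing, via \eqref{eq:Equivalence with 1 dim setting} and $t=\cos(2\phi)$, to the sign of $\int_0^{\pi/2}\bigl(\phi-\tfrac{\pi}{2}\sin^2\phi\bigr)\sin^{2\alpha+1}\phi\cos^{2\beta+1}\phi\,d\phi$, which your folding identity settles; I checked the Beta-integral identity $\tfrac{d}{d+1}=\tfrac{\alpha+1}{\alpha+\beta+2}$, the antisymmetry and positivity of $h$ on $(0,\pi/4)$, and the sign of the folded bracket (using $\alpha-\beta=(d-1)(\beta+1)>0$ exactly when $d\ge 2$, with the case $\alpha=\beta$ correctly degenerating to equality, consistent with $\mathbb{FP}^1$), and all steps are sound. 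The trade-off: the paper's argument is shorter given its machinery and pinpoints the structural cause (a negative Jacobi coefficient), which is the language used throughout the paper; yours is self-contained and elementary, produces an explicit quantitative gap between the two energies, and your continuity-in-$s$ step yields the stronger conclusion that $\sigma$ fails to minimize $I_{F_s}$ for \emph{every} $s$ in some right-neighborhood of $-1$ (a strengthening that is also implicit in the paper's proof but goes beyond the lemma as stated). One small point of care: your closed form $I_{F_s}(\mu_{ONB})=\tfrac{d\,\pi^{-s}}{s(d+1)}$ uses $F_s(1)=0$, which is valid precisely because $s<0$ near $-1$; since you only invoke it there, the argument stands.
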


\begin{proof}
For the first statement, according to Theorem \ref{thm:uniform measure min, continuous}, one needs to check that the kernel $F_{-1} (t) = -\arccos t$ is not conditionally positive definite, or equivalently, that not all of its Jacobi coefficients  with $n\ge 1$ are non-negative. This has  been done in \cite[pp. 179--181]{Gan}. An alternative approach would consist of observing that it is enough to consider the case $d=2$. Indeed, $\mathbb{FP}^d$ with $d>2$ contains copies of $\mathbb{FP}^2$, and if a kernel is not positive definite on $\mathbb{FP}^2$, it can't be positive definite on any bigger domain. Then one could directly find negative coefficients in the Jacobi expansion of $F_{-1}$ on $\mathbb{FP}^2$ for all four instances of $\mathbb F$ (e.g., for $\mathbb{RP}^2$ this is already the second Jacobi coefficient). 

Hence, $\sigma$ is not a  minimizer for   $s = -1$. If $\sigma$ were a minimizer  for any sequence of values $s_j \rightarrow -1$, this would imply by continuity  that it is also a minimizer for $s =-1$, which is a contradiction. 
\end{proof}



\section{Chordal Distance Riesz Energy}\label{s.chordal}

In this section we  discuss the characterization of  the minimizers of the Riesz energy with respect to the chordal distance on projective spaces. Unlike the main focus of this paper, i.e. the geodesic distance Riesz energy, the minimizers of the chordal Riesz energy are more straightforwad --   they follow the exact same pattern as those for the Euclidean Riesz energy on the sphere, as alluded to in Section \ref{s.typical}. 

Recall that the {\emph{chordal distance}} on $\Omega$ is defined as 
\begin{equation}
\rho(x,y) = \sin \left( \frac{\vartheta(x,y)}{2} \right) =   \left( \frac{1-\cos(\vartheta(x,y))}{2} \right)^{1/2},
\end{equation}
where, as before, $\vartheta(x,y)$ is the geodesic distance on $\Omega$.  Observe that on the sphere this is a scaled version of the Euclidean metric, i.e. $\| x-y \| =  2 \rho (x,y) $ when $\Omega = \mathbb S^d$.

Just as in \eqref{e.fs1}, we  define \textit{chordal} Riesz $s$-kernels as 
\begin{equation}\label{eq:Chordal Riesz Def}
R_s(\cos(\vartheta(x,y))) = \frac{1}{s} \rho(x,y)^{-s} \,\,\, \textup{ for } s\neq 0 \,\,\qquad \textup{and} \,\,\qquad  
R_0(\cos(\vartheta(x,y))) =  - \log(\rho(x,y)). 
\end{equation}
where $\rho$ is the chordal metric on $\Omega$. 

 We are now ready to characterize    the minimizers  of the chordal distance Riesz energy (in the case of the Euclidean distance on the sphere this characterization is well known \cite{Bj,BorHS}). 

\begin{theorem}\label{thm:Chordal Riesz Minimizers}
Assume that $s < D = \dim( \Omega) = 2 \alpha +2$. Then the minimizers of the chordal Riesz energy $I_{R_s}$ on $\Omega$ are as follows:
\begin{itemize}
\item If $s>-2$, then the uniform measure $\sigma$ is the unique minimizer of $I_{R_s}$.
\item If $s = -2$, then the minimizing measures of $I_{R_s}$ are exactly those satisfying
\begin{equation}\label{e.bary}
\int_{\Omega} \int_{\Omega} P_1^{\alpha, \beta}\Big(\cos(\vartheta(x,y)) \Big) d \mu(x) d \mu(y) = 0.
\end{equation}
\item If $s < -2$ and $\Omega = \mathbb{FP}^{d}$, then the minimizing measures of $I_{R_s}$ are exactly those of the form $\mu_{ONB} = \frac{1}{d+1} \sum_{j=1}^{d+1} \delta_{e_j}$, where $\{e_1, ..., e_{d+1}\} \subset \mathbb{FP}^{d}$ is a set where all element are diameter apart, i.e. points corresponding to an orthonormal basis in $\mathbb{F}^{d+1}$.
\item If $s < -2$ and $\Omega = \mathbb{S}^{d}$, then the minimizing measures of $I_{R_s}$ are exactly those of the form $\frac{1}{2} \Big( \delta_{p} + \delta_{-p}\Big)$ for any $p \in \mathbb{S}^d$.
\end{itemize}
\end{theorem}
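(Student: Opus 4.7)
The plan is to treat the three regimes $s>-2$, $s=-2$, and $s<-2$ separately. For $-2 < s < D$, I would first establish strict positivity of the Jacobi coefficients $\widehat{R_s}_n$ for every $n \geq 1$. Since $\rho^2(x,y) = (1-\cos\vartheta(x,y))/2$, the kernel $R_s$ is, up to multiplicative constants, $(1-t)^{-s/2}$ (or $-\log(1-t)$ at $s=0$). The Jacobi coefficient formula
\[
\widehat{R_s}_n \;\propto\; \frac{1}{s}\int_{-1}^{1}(1-t)^{\alpha-s/2}(1+t)^{\beta} P_n^{(\alpha,\beta)}(t)\,dt
\]
can be evaluated explicitly using the Rodrigues identity for $P_n^{(\alpha,\beta)}$ and $n$ successive integrations by parts, reducing to a ratio of Gamma functions whose sign is positive throughout $-2 < s < D$. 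The key algebraic fact is that the product $\Gamma(1-s/2)/\Gamma(1-s/2-n)$ (which, together with the outer $1/s$, controls the sign) changes sign exactly as $s$ crosses $-2$, so that positivity of all $\widehat{R_s}_n$ with $n\geq 1$ persists precisely on the stated interval. Once positivity is in hand, for $-2 < s \leq 0$ the kernel is continuous (or has only a mild logarithmic singularity handled by Theorem \ref{thm:general}), while for $0 < s < D$ the kernel is singular; however $\rho(x,y) = \sin(\vartheta(x,y)/2)$ is equivalent to $\vartheta(x,y)$ near the diagonal, so the $A_1$ condition needed in Theorem \ref{thm:general} is inherited directly from Proposition \ref{p.max}. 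Thus $\sigma$ is the unique minimizer.

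For the critical case $s=-2$, the kernel $R_{-2}(t) = -(1-t)/4$ is affine in $t$, so its Jacobi expansion terminates at degree $1$: a direct computation gives $R_{-2}(t) = \widehat{R_{-2}}_0 + \tfrac{1}{2(\alpha+\beta+2)} P_1^{(\alpha,\beta)}(t)$, so $\widehat{R_{-2}}_1 > 0$ and $\widehat{R_{-2}}_n = 0$ for $n \geq 2$. Consequently
\[
I_{R_{-2}}(\mu) \;=\; \widehat{R_{-2}}_0 \,+\, \frac{1}{2(\alpha+\beta+2)} \int_{\Omega}\!\int_{\Omega} P_1^{(\alpha,\beta)}\bigl(\cos\vartheta(x,y)\bigr)\, d\mu(x)\, d\mu(y),
\]
and since $P_1^{(\alpha,\beta)}$ is positive definite on $\Omega$, the minimum is attained exactly when the $P_1$-integral vanishes, yielding the characterization \eqref{e.bary}.

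The hardest regime is $s < -2$, where I would abandon harmonic analysis and argue directly. Since $\rho \in [0,1]$ and $|s|>2$, one has the pointwise inequality $\rho^{|s|} \leq \rho^2$, with equality iff $\rho \in \{0,1\}$. Coupled with the bound $\int\!\int \rho^2 \,d\mu\,d\mu \leq (\alpha+1)/(\alpha+\beta+2)$ derived from the $s=-2$ analysis (rewrite $\rho^2 = (1-\cos\vartheta)/2$ in terms of $P_1^{(\alpha,\beta)}$ and use positive definiteness), this yields a universal lower bound for $I_{R_s}(\mu) = -\tfrac{1}{|s|} \int\!\int \rho^{|s|}\, d\mu\, d\mu$. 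Equality in both inequalities forces two conditions: (i) $\rho(x,y) \in \{0,1\}$ for $\mu \times \mu$-almost every $(x,y)$, and (ii) $\int\!\int P_1^{(\alpha,\beta)}\,d\mu\,d\mu = 0$. A short argument shows (i) compels $\mu$ to be purely atomic, supported on a set $\{x_1,\dots,x_N\}$ of points pairwise at maximum distance (a continuous component would force the support to lie in the orthogonal complement of almost every one of its own points, which is impossible). Writing $\mu = \sum_{i=1}^N w_i \delta_{x_i}$ and using $P_1^{(\alpha,\beta)}(1) = \alpha+1$, $P_1^{(\alpha,\beta)}(-1) = -(\beta+1)$, condition (ii) becomes $(\alpha+\beta+2)\sum w_i^2 = \beta+1$. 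Cauchy--Schwarz gives $N \geq (\alpha+\beta+2)/(\beta+1)$, which equals $2$ on $\mathbb{S}^d$ and $d+1$ on $\mathbb{FP}^d$, exactly matching the maximum cardinality of a pairwise maximum distance set. Consequently $N$ is forced to this value with uniform weights $w_i = 1/N$, recovering precisely the antipodal pair measure and $\mu_{ONB}$, respectively.

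The main technical obstacle will be the explicit sign analysis of the Jacobi coefficients $\widehat{R_s}_n$ for $-2 < s < D$; the Gamma-function bookkeeping, though standard, must show simultaneously that every coefficient with $n \geq 1$ is strictly positive and that this positivity degenerates at the endpoint $s=-2$ in a way consistent with the observed transition of minimizers.
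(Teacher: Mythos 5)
Your proposal is correct, and for two of the three regimes it takes a genuinely different route from the paper. For $-2<s<D$ you compute the Jacobi coefficients in closed form: Rodrigues' formula plus $n$ integrations by parts gives $\widehat{R_s}_n \propto \frac1s\bigl(\tfrac s2\bigr)_n \int_{-1}^1(1-t)^{\alpha-s/2}(1+t)^{n+\beta}dt$, and since $\frac1s\bigl(\tfrac s2\bigr)_n=\tfrac12\bigl(\tfrac s2+1\bigr)_{n-1}$, strict positivity for all $n\ge1$ holds exactly for $s>-2$ (with the boundary terms and the Beta integral requiring $s<D=2\alpha+2$); the singular range $0<s<D$ is then handled through Theorem \ref{thm:general}, which applies because $\sin(\vartheta/2)$ and $\vartheta$ are equivalent. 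The paper instead proves conditional strict positive definiteness only for $-2<s<0$, by expanding $(1-t)^{-s/2}$ as a binomial series in $(1+t)$ with positive coefficients and invoking positive definiteness of $1+t$ together with Schur's product theorem, and it simply cites \cite{AndDGMS} and the classical sphere results for $0\le s<D$. Your route is more self-contained and makes the degeneration at $s=-2$ completely transparent (the factor $\tfrac s2+1$ vanishes, killing all coefficients with $n\ge2$, consistent with the affine kernel), at the cost of the Gamma/boundary-term bookkeeping, which closely parallels the paper's Lemma \ref{lem:Fa IBP}. The case $s=-2$ is argued identically in both. For $s<-2$ both proofs start from the pointwise comparison $\rho^{|s|}\le\rho^2$ with equality only at $\vartheta\in\{0,\pi\}$, forcing a finitely supported, pairwise diametrically separated minimizer satisfying \eqref{e.bary}; you then finish by evaluating $I_{P_1}$ on atomic measures, getting $(\alpha+\beta+2)\sum w_i^2=\beta+1$ and using Cauchy--Schwarz to force $N=\tfrac{\alpha+\beta+2}{\beta+1}$ ($=2$ or $d+1$) with equal weights, whereas the paper uses a mass-redistribution convexity argument (or, alternatively, tight $1$-designs). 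Your Cauchy--Schwarz finish is an elementary re-derivation of the tight-design counting and is arguably cleaner; just make sure to state explicitly the two facts you use implicitly, namely that a.e.\ equality upgrades to a statement about $\operatorname{supp}(\mu)$ (an easy open-neighborhood argument) and that a diametrically separated set in $\mathbb{FP}^d$ has at most $d+1$ points, which is also what supplies the matching upper bound on $N$.
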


{Observe that the  last two statements can be combined into one: indeed, in both cases, the minimizers are exactly uniform measures on any maximal  set whose points are separated by the diameter of  $\Omega$.} Therefore, there is indeed no difference between the behavior of optimizers of the Euclidean Riesz energy on the sphere and the chordal Riesz energy on projective spaces. 

We also remark on the condition \eqref{e.bary}. It is obviously equivalent to 
\begin{equation}\label{e.bary1}
 \int_{\Omega} \int_{\Omega} \cos(\vartheta(x,y)) \, d \mu(x) d \mu(y) =  \int_{\Omega} \int_{\Omega} \cos(\vartheta(x,y)) \, d \sigma(x) d \sigma (y).
 \end{equation}
  In the case, of the sphere this amounts to $$  \int_{\mathbb S^d}  \int_{\mathbb S^d} \langle x,y \rangle  d \mu(x) d \mu(y) = \left\| \int_{\mathbb S^d} x  d \mu(x) \right\|^2 = 0 ,$$  which implies that $\int_{\mathbb S^d} x  d \mu(x) = 0$, i.e. the minimizers are exactly the barycentric measures.

\begin{proof}
The results for the sphere are classical for $- 2 < s < D$ (see e.g. \cite[Proposition 4.6.4]{BorHS}), and were proven for $s < -2$ in \cite[Theorem 6, and the Corollary to Theorem 7]{Bj}.  The case of $\Omega = \mathbb{FP}^{d}$ with  $0 \leq s < \dim(\Omega)$ was treated  in  \cite[Theorem 2.11]{AndDGMS}, so we need only address the case $s < 0$ on projective spaces (though the method will also cover $s < 0$ for spheres).


Assume first  that $-2 < s < 0$. Then
\begin{equation}\label{e.rtaylor}
R_s(t) =  \frac{1}{s} \Big( \frac{1- t}{2} \Big)^{-\frac{s}{2}} =  \frac{1}{s} \Big( 1 - \frac{t+1}{2} \Big)^{-\frac{s}{2}} =\sum_{k=0}^{\infty}    \frac{1}{s}  \binom{ -\frac{s}{2}}{k}  \Big( \frac{-1}{2} \Big)^k (t+1)^k.
\end{equation}
It is easy to see that all the coefficients in the last Taylor series above are positive, and a  quick check with the ratio test shows us that this series converges uniformly on $[-1,1]$.

Observe that  $P_0^{(\alpha, \beta)} (t) =1$ and $P_1^{(\alpha, \beta)} (t)  = (\alpha +1) + (\alpha + \beta + 2) \frac{t-1}{2}$, we see that the function 
$$ 1 + t = \frac{\beta +2}{\alpha + \beta + 2}  P_0^{(\alpha, \beta)} (t)  + \frac{2}{\alpha + \beta + 2} P_1^{(\alpha, \beta)} (t) $$ is positive definite for all geometrically relevant values of $\alpha $ and $\beta$, since $\alpha$, $\beta \ge -\frac12$, see \eqref{eq:JacobiAlphaBeta}. Therefore, $(1+ t)^k $ is also positive definite for all $k \in \mathbb N$  due to Schur's product theorem, and thus  $R_s(t) $ is conditionally positive definite due to \eqref{e.rtaylor} as a positive linear combination (up to the constant term) of positive definite functions. This implies that for $n\ge 1$
\begin{align}
 \widehat{R_s}(n) 
& =  \frac{1}{s} \frac{m_n}{P_n^{(\alpha, \beta)}(1)^2}   \sum_{k=0}^{\infty} \binom{ \frac{-s}{2}}{k}  \Big( \frac{-1}{2} \Big)^k \int_{-1}^{1} (t+1)^k P_n^{(\alpha, \beta)}(t) d\nu^{\alpha, \beta}(t) \ge 0.  \label{eq.fubi} 
\end{align}
Moreover, notice that, due to positive definiteness of $(1+t)^k$, all the integrals in \eqref{eq.fubi} are non-negative, and in addition the term with $k=n$ must be strictly positive (if it were zero, $P_n^{(\alpha, \beta)}$ would be orthogonal to all polynomials of degree $n$). Therefore, $\widehat{R_s}(n) > 0 $ for all $n\ge 1$,  and  $\sigma$ is the unique minimizer of $I_{R_s}$ in view of  Theorem \ref{thm:uniform measure min, continuous}. \\

We now consider the case $s=-2$. Since
$$2 R_{-2}(\cos(\vartheta(x,y)) = \big( \rho (x,y) \big)^2  = \frac{\cos(\vartheta(x,y))-1}{2} = \frac{1}{\alpha+\beta+2} P_1^{(\alpha, \beta)} \Big( \cos(\vartheta(x,y))\Big)  -  \frac{\alpha+ 1}{\alpha+\beta+2},$$
  by Theorem \ref{thm:uniform measure min, continuous}, $\sigma$ minimizes $I_{R_{-s}}$. Thus, if $\mu$ is a minimizer of $I_{R_{-2}}$, we must have
$$\int_{\Omega} \int_{\Omega}  P_1^{(\alpha, \beta)} (\cos(\vartheta(x,y))) d\mu(x) d \mu(y) = \int_{\Omega} \int_{\Omega} P_1^{(\alpha, \beta)}(\cos(\vartheta(x,y))) d\sigma(x) d \sigma(y) = 0 
,$$
which yields the  second claim \eqref{e.bary}.\\

For the final claim, we note that  
 for $s < -2$,
$$ s R_s(\cos(\vartheta(x,y)) = 
 \big( \rho (x,y) \big)^{-s}   \leq  \big( \rho (x,y) \big)^2  = (-2 ) R_{-2}(\cos(\vartheta(x,y))$$
with equality if and only if $\vartheta(x,y) \in \{0, \pi\}$. Hence  for all $\mu \in \mathbb{P}(\Omega)$,
$$    I_{R_{s}}(\mu) \geq - \frac{2}{s}  I_{R_{-2}}(\mu) \geq  - \frac{2}{s}  \inf_{\nu \in \mathcal{P}(\Omega)} I_{R_{-2}}(\nu).$$
The first inequality becomes an equality if  $\vartheta(x,y) \in \{0, \pi\}$ for all $x,y \in \operatorname{supp}(\mu)$. This means that $\operatorname{supp}(\mu)$ is discrete, with at most $d+1$ elements (or $2$ if $\Omega$ is a sphere). 
The second inequality becomes an equality if $\mu$ minimizes $I_{R_{-2}}$, in particular, it satisfies \eqref{e.bary}.
One can easily check that the measures in  the last two bullet points of Theorem \ref{thm:Chordal Riesz Minimizers} satisfy both of these conditions.
For example, for $\mu = \mu_{ONB}$ the left-hand side of \eqref{e.bary} becomes
$$ \frac{d}{d+1} P_1^{\alpha, \beta} (-1) +  \frac{1}{d+1} P_1^{\alpha, \beta} (1)  =  (\alpha + 1) -  \frac{d}{d+1} (\alpha + \beta +2)  = 0,$$
 where we use the fact that $\beta = \frac{\alpha +1}{d} -1$ for $\mathbb{FP}^d$, see \eqref{eq:JacobiAlphaBeta}.

Moreover, a simple convexity argument shows that  if the support of $\mu$ is not maximal (i.e. contains less than $d+1$ or, respectively, $2$ points) or if the weights are not equal, then the chordal Riesz energy can be decreased by redistributing mass, i.e. such measures cannot be minimizers. Indeed, if $\mu = \displaystyle{\sum_{i=1}^{d+1} \gamma_i \delta_{z_i}}$, where $\{z_1,\dots, z_{d+1}\}$ are diametrically  separated points, and if $\gamma_1 \neq\gamma_2$ (one of these could be zero), it is easy to check that replacing both weights by $\frac12 (\gamma_1 +\gamma_2)$ decreases the energy, since $\gamma_1 \gamma_2 < \left( \frac{\gamma_1 + \gamma_2}{2} \right)^2$. 
This finishes the proof of the  claim (a similar argument is used in the proof  of Lemma \ref{lem:Linear programming for ONB}). \\

 An alternative (perhaps more illuminating, but requiring deeper background) way to finish the proof would be the following: since $\mu$ has discrete support, condition \eqref{e.bary} states that $\mu$  corresponds to a weighted $1$-design on $\Omega$. 
 But since its cardinality is at most $d+1$ (or $2$ if $\Omega$ is a sphere) which happens to be exactly the cardinality of  the so-called tight $1$-designs. However, any weighted design has cardinality at least as large as that of a tight design, with equality if and only if the weighted design is actually a tight design (in particular, it has equal weights).  Since the configurations described in the last two bullet points of Theorem \ref{thm:Chordal Riesz Minimizers}
are exactly tight $1$-designs on the corresponding space $\Omega$, this finishes the proof. See \cite{Lev,Lyu} for the relevant definitions
and statements. 

\end{proof}

\section{Discrete Minimizers of the Geodesic Riesz Energy}\label{sec.discrete}

We now return to the geodesic Riesz energy on projective spaces.  In contrast to the results of the previous sections, which focused on the range of Riesz expontents for which  the uniform measure $\sigma$ minimizes such energies, we shall now turn our attention to the other end of the spectrum -- the case when the discrete measures $\mu_{ONB}$ optimizes the energy, i.e. the situation related to  the integral version of the  Fejes T\'oth conjecture \eqref{e.ift}. Many of the ideas presented in this section have appeared in \cite{LimM22}, and some even earlier in \cite{BilM}, in the context of the integral Fejes T\'oth conjecture \eqref{e.ift} on the sphere. Here we further develop and simplify them, translate them to the language of geodesic Riesz energy on $\mathbb{RP}^d$ and extend them to other projective spaces. 

We start with a  lemma that shows that if the conjecture holds (i.e. $\mu_{ONB}$ is an optimizer)  for some exponent, then measures of the type  $\mu_{ONB}$ {\emph{uniquely}}  optimize the energy for all larger powers (smaller Riesz exponents). This statement along with its proof is similar in spirit to the case $s<-2$ (third bullet point) of Theorem \ref{thm:Chordal Riesz Minimizers}.
\begin{lemma}\label{lem:Linear programming for ONB}
Let $\Omega = \mathbb{FP}^d$ and  $\mu_{ONB} = \frac{1}{d+1} \sum_{j=1}^{d+1} \delta_{e_j}$, where $\{e_1, ..., e_d\} \subset \mathbb{FP}^{d}$ is a set where all element are diameter apart, i.e. points corresponding to an orthonormal basis in $\mathbb{F}^{d+1}$. If $\mu_{ONB}$ is a  minimizer of $I_{F_{s^*}}$ for some $s^* < 0$, then such measures are exactly the minimizers of $I_{F_s}$ for all $s < s^*$.
\end{lemma}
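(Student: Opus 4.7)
The plan is to exploit the pointwise inequality between $F_s$ and $F_{s^*}$ coming from $\vartheta \in [0,\pi]$, in the same spirit as the proof of the last bullet of Theorem \ref{thm:Chordal Riesz Minimizers}. First, I would reformulate the problem: for $s < 0$, write $I_{F_s}(\mu) = \frac{1}{s}\int\int \vartheta^{-s}\,d\mu\,d\mu$ and note that $\frac{1}{s} < 0$, so minimizing $I_{F_s}$ is equivalent to maximizing $J_{-s}(\mu) := \int_\Omega\int_\Omega \vartheta(x,y)^{-s}\, d\mu(x)\, d\mu(y)$. A direct calculation using $\vartheta(e_i,e_j) = \pi\,\mathbbm{1}_{i\neq j}$ shows $J_{-s}(\mu_{ONB}) = \frac{d}{d+1}\pi^{-s}$, so the hypothesis that $\mu_{ONB}$ minimizes $I_{F_{s^*}}$ translates into the bound $J_{-s^*}(\mu) \leq \frac{d}{d+1}\pi^{-s^*}$ for every Borel probability measure $\mu$.

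Next, for $s < s^* < 0$ (so $s^* - s > 0$) and $\vartheta \in [0,\pi]$, I would use the elementary bound
\[
\vartheta^{-s} = (\vartheta/\pi)^{s^*-s}\,\pi^{s^*-s}\,\vartheta^{-s^*} \leq \pi^{s^*-s}\,\vartheta^{-s^*},
\]
with equality iff $\vartheta \in \{0,\pi\}$. Integrating against $d\mu\otimes d\mu$ and chaining with the hypothesis yields $J_{-s}(\mu) \leq \pi^{s^*-s}\,J_{-s^*}(\mu) \leq \pi^{s^*-s}\cdot \tfrac{d}{d+1}\pi^{-s^*} = J_{-s}(\mu_{ONB})$, i.e.\ $I_{F_s}(\mu) \geq I_{F_s}(\mu_{ONB})$. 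Isometry-invariance then extends this to every measure of the form $\mu_{ONB}$ built from any orthonormal basis of $\mathbb{F}^{d+1}$.

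The more delicate part is the \emph{exactly} assertion. If $\mu$ is a minimizer of $I_{F_s}$, then equality must hold throughout the above chain, and in particular in the pointwise bound at $\mu\otimes\mu$-a.e.\ $(x,y)$, forcing $\vartheta(x,y) \in \{0,\pi\}$ on a full-measure set. I would then observe that $\vartheta(x,y) = 0$ exactly on the diagonal of $\mathbb{FP}^d$, which has $\mu_c\otimes\mu_c$-measure zero for any atomless component $\mu_c$ of $\mu$; this rules out a continuous part of $\mu$ and leaves $\mu = \sum_{i=1}^{N}\gamma_i\delta_{z_i}$ with the atoms $\{z_i\}$ pairwise at geodesic distance $\pi$, hence a subset of an orthonormal basis, so $N \leq d+1$. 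The equality $J_{-s}(\mu) = J_{-s}(\mu_{ONB})$ then becomes $\sum_i \gamma_i^2 = \frac{1}{d+1}$, and combined with $\sum_i \gamma_i = 1$ and Cauchy--Schwarz ($\sum_i \gamma_i^2 \geq \tfrac{1}{N} \geq \tfrac{1}{d+1}$) this forces $N = d+1$ and $\gamma_i \equiv \frac{1}{d+1}$, giving $\mu = \mu_{ONB}$ as desired.

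The main obstacle is this final characterization, and specifically the elimination of a possible atomless component of $\mu$; everything else reduces to a one-line monotonicity bound on $[0,\pi]$ and an application of Cauchy--Schwarz. Once the support is forced into a set of pairwise diameter-separated points, the convexity/Cauchy--Schwarz argument is essentially identical to the one appearing at the end of the proof of Theorem \ref{thm:Chordal Riesz Minimizers}.
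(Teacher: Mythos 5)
Your overall route coincides with the paper's proof: the pointwise monotonicity bound $\vartheta^{-s}\le \pi^{s^*-s}\vartheta^{-s^*}$ on $[0,\pi]$ with equality exactly at $\vartheta\in\{0,\pi\}$, the chain $I_{F_s}(\mu)\ge \frac{s^*}{s}\pi^{s^*-s}I_{F_{s^*}}(\mu)\ge \frac{s^*}{s}\pi^{s^*-s}I_{F_{s^*}}(\mu_{ONB})=I_{F_s}(\mu_{ONB})$, and the final step forcing $N=d+1$ and equal weights (your Cauchy--Schwarz is the same as the paper's Jensen argument with the strictly concave $x\mapsto x(1-x)$) are all exactly what the paper does.

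The one genuine gap is in the step you yourself flag as delicate: eliminating an atomless component. From equality of energies you correctly get $\vartheta(x,y)\in\{0,\pi\}$ for $\mu\otimes\mu$-a.e.\ $(x,y)$, and the diagonal is indeed $\mu_c\otimes\mu_c$-null for an atomless component $\mu_c$; but this only yields $\vartheta(x,y)=\pi$ for $\mu_c\otimes\mu_c$-a.e.\ pair, and by itself that does not rule out $\mu_c\neq 0$. A priori an atomless measure could be carried by a set in which almost every pair is diameter-separated; note that for $d\ge 2$ the set $C_x=\{y:\vartheta(x,y)=\pi\}$ is a copy of $\mathbb{FP}^{d-1}$, not a single point, so the claim is not immediate. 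What you must still invoke is that a pairwise diameter-separated set in $\mathbb{FP}^d$ (pairwise orthogonal lines in $\mathbb{F}^{d+1}$) has at most $d+1$ elements, and converting your a.e.\ statement into this requires an extra argument, e.g.\ iterated Fubini: pick $x_1$ with $\mu_c(C_{x_1})=\mu_c(\Omega)>0$, then $x_2\in C_{x_1}$ with $\mu_c(C_{x_1}\cap C_{x_2})>0$, and so on; after $d+1$ steps the intersection is empty, since no line is orthogonal to $d+1$ pairwise orthogonal lines, a contradiction. A cleaner repair, which is what the paper does implicitly by phrasing the equality condition on $\operatorname{supp}(\mu)$, avoids the atomic/atomless decomposition altogether: for $s,s^*<0$ both kernels are continuous, so the nonnegative continuous function $F_s(\cos\vartheta(x,y))-\frac{s^*}{s}\pi^{s^*-s}F_{s^*}(\cos\vartheta(x,y))$, having zero $\mu\otimes\mu$-integral, vanishes identically on $\operatorname{supp}(\mu)\times\operatorname{supp}(\mu)$; hence all distinct points of $\operatorname{supp}(\mu)$ are at distance $\pi$, so the support is finite with at most $d+1$ points, and your Cauchy--Schwarz finish applies verbatim.
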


\begin{proof}
For $s < s^*<0$, we have that
$$ s^* \pi^{s^*} F_{s^*}(t) = \left(\frac{\arccos(t)}{ \pi} \right)^{-s*} \geq \left(\frac{\arccos(t)}{ \pi} \right)^{-s} =   s \pi^{s} F_{s}(t), $$
with equality if and only if $t \in \{-1, 1\}$, i.e. $\arccos(t) = \vartheta(x,y) \in \{ 0, \pi\}$, meaning that $x = y$ or $x$ and $y$ are diameter apart. Thus for all probability measures on $\mathbb{FP}^d$,
\begin{equation*}
I_{F_{s}}(\mu) \geq  \frac{{s}^*}{s} {\pi^{s^*-s}} \, I_{F_{s^*}}(\mu) \geq     \frac{{s}^*}{s} {\pi^{s^*-s}} \,\, I_{F_{s^*}}(\mu_{ONB}).   
\end{equation*}
The first inequality becomes an equality if and only if for all $x,y \in \operatorname{supp}(\mu)$, $\vartheta(x,y) \in \{ 0, \pi\}$. Let us assume $\mu$ is such a measure. In this case, $N = \# \operatorname{supp}(\mu) \leq d+1$, and if $\mu = \sum_{j=1}^{N} \lambda_j \delta_{x_j}$, then by Jensen's inequality, using the fact that $\displaystyle{\sum_{i\neq j} \lambda_i  = 1 - \lambda_j}$, we find that 
$$ I_{F_{s^*}}(\mu) = \frac{\pi^{-s^*}}{s^*}\sum_{j=1}^{N} (1- \lambda_j) \lambda_j \geq \frac{\pi^{-s^*}}{s^*} \left(1- \frac{1}{N} \right) \geq \frac{\pi^{-s^*}}{s^*} \left(1- \frac{1}{d+1}\right) = I_{F_{s^*}}(\mu_{ONB}).$$
The first inequality becomes an equality if and only if the weights $\lambda_1 = ... = \lambda_N = \frac{1}{N}$, since $\phi(x) = x(1-x)$ is strictly concave, and the second inequality becomes an equality only when $N = d+1$, proving the lemma.
\end{proof}


Next, we observe that if $\mu_{ONB}$ minimizes the energy $I_{F_s} $ with $s=-1$ (i.e. if the integral version of the Fejes T\'oth conjecture \eqref{e.ift} holds  in case of $\Omega = \mathbb{RP}^d$), then the energy $I_{F_{-1}} $ necessarily admits various other minimizers.

\begin{lemma}\label{lem:Behavior for geodesic distance for ONB}
Let $\Omega = \mathbb{FP}^d$ with $d \geq 2$. If $\mu_{ONB}$ is a minimizer of $I_{F_{-1}}$, then  $I_{F_{-1}}$ also has minimizers whose support is not discrete (in particular, there exist minimizers whose support contains  one-dimensional subsets). 
\end{lemma}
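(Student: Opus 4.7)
The plan is to exhibit a second minimizer $\tilde\mu$ of $I_{F_{-1}}$ with non-discrete support by smearing two of the atoms of $\mu_{ONB}$ into a continuous measure on the ``projective line'' $C \subset \mathbb{FP}^d$ spanned by $e_1$ and $e_2$. Concretely, let $V = \operatorname{span}_{\mathbb{F}}(e_1, e_2) \subset \mathbb{F}^{d+1}$ and let $C$ be the set of lines in $V$; this is an isometrically embedded copy of $\mathbb{FP}^1 \cong \mathbb{S}^D$ (with $D = \dim_{\mathbb{R}} \mathbb{F}$) inside $\mathbb{FP}^d$, and in particular is not a discrete set---for $\Omega = \mathbb{RP}^d$ it is literally a circle.

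The crucial geometric observation is that on $\mathbb{FP}^d$ one has $\vartheta(x,y) = \pi$ precisely when the $\mathbb{F}$-representers of $x$ and $y$ are orthogonal, so every point $x \in C$ is at maximal geodesic distance $\pi$ from each $e_j$ with $j \geq 3$. Given this, I would pick any probability measure $\rho$ on $C$ that minimizes the geodesic $s=-1$ Riesz energy of $\mathbb{FP}^1$ (e.g.\ the uniform measure $\sigma_C$, or $\frac{1}{2}(\delta_{e_1} + \delta_{e_2})$ itself), and set
\[
\tilde\mu \;=\; \frac{2}{d+1}\,\rho \;+\; \frac{1}{d+1} \sum_{j=3}^{d+1} \delta_{e_j}.
\]
This is a probability measure on $\mathbb{FP}^d$ whose support contains $C$, hence contains a one-dimensional subset.

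The remaining step is the bookkeeping identity $I_{F_{-1}}(\tilde\mu) = I_{F_{-1}}(\mu_{ONB})$; once this is in hand, the hypothesis that $\mu_{ONB}$ is a minimizer immediately upgrades $\tilde\mu$ to a minimizer as well. Expanding $-\int\!\!\int \vartheta\, d\tilde\mu\, d\tilde\mu$ produces three pieces: a $C \times C$ self-term $\frac{4}{(d+1)^2}\int\!\!\int_C \vartheta\, d\rho^2$, a cross term between $\rho$ and $\{e_j\}_{j \geq 3}$ equal to $\frac{4(d-1)\pi}{(d+1)^2}$ (by the orthogonality observation), and an atomic piece $\frac{(d-1)(d-2)\pi}{(d+1)^2}$ (pairs $e_i, e_j$ with $i\neq j$, $i,j\geq 3$). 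By Theorem~\ref{t.d1expand} applied on $C \cong \mathbb{FP}^1$, the minimizing value of $I_{F_{-1}}$ on $C$ equals that of $\frac{1}{2}(\delta_p + \delta_q)$ with $\vartheta(p,q) = \pi$, so $\int\!\!\int_C \vartheta\, d\rho^2 = \pi/2$. Summing the three pieces gives $\pi d/(d+1)$, which matches $-I_{F_{-1}}(\mu_{ONB}) = \pi d/(d+1)$.

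I do not anticipate any serious technical obstacle here: the argument is essentially the observation that the pair $\{e_1, e_2\}$ in $\mu_{ONB}$ can be replaced by any mass-preserving optimal distribution on the projective line they span, because the remaining atoms $\{e_j\}_{j \geq 3}$ are orthogonal to the entire spanning subspace $V$. The only care required is to correctly identify $C$ as an isometric copy of $\mathbb{FP}^1$ and to transfer minimality of $\rho$ through this isometry (to justify $\int\!\!\int_C \vartheta\, d\rho^2 = \pi/2$ for, say, the uniform choice); for the octonionic case $\mathbb{OP}^2$ the ``projective line through $e_1,e_2$'' must be defined slightly differently, but its isometry with $\mathbb{OP}^1\cong \mathbb{S}^8$ and the orthogonality relation to $e_3$ still hold.
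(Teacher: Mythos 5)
Your proposal is correct and follows essentially the same route as the paper's proof: both replace the pair of atoms $\{e_1,e_2\}$ in $\mu_{ONB}$ by a minimizing (e.g.\ uniform) measure on the copy of $\mathbb{FP}^1$ they span, use the fact that every point of that projective line is at distance $\pi$ from the remaining atoms, and invoke the $\mathbb{FP}^1$ characterization (part \eqref{ii} of Theorem \ref{t.d1expand}) to see that the energy is unchanged. Your explicit bookkeeping summing to $\pi d/(d+1)$ is just a spelled-out version of the paper's term-by-term comparison of $\frac{2}{d+1}\widetilde{\omega}_2+\frac{d-1}{d+1}\nu$ with $\frac{2}{d+1}\widetilde{\omega}_1+\frac{d-1}{d+1}\nu$.
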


\begin{proof}
Let $\omega_1$ be the uniform measure on $\mathbb{FP}^1$ and  set  $\omega_2 = \frac{1}{2}( \delta_{z_1} + \delta_{z_2})$ with $z_1, z_2 \in \mathbb{FP}^{1}$ such that $\vartheta(z_1, z_2) = \pi$. Part \eqref{ii} of Theorem \ref{t.d1expand} shows that $I_{F_{-1}} (\omega_1) = I_{F_{-1}} (\omega_2)$, since both of these measures are orthogonally symmetric and thus both minimize $I_{F_{-1}}$.  

Using this fact, the strategy for constructing alternative minimizers is clear: assuming that the measure $\displaystyle{\mu_{ONB} = \frac{1}{d+1} \sum_{i=1}^{d+1} \delta_{x_i} }$   (where $\vartheta(x_i, x_j) = \pi$ for  all $i \neq j$) is a minimizer of $I_{F_{-1}}$,  one can replace any two  discrete masses at points $x_i$ and $x_j$ by a uniform measure on a copy of $\mathbb{FP}^1$ which contains these points, and this would not change the value of the energy, thus yielding another minimizer. \\

Indeed,  using the notation above, set $\nu = \frac{1}{d-1}\sum_{j=3}^{d+1} \delta_{x_j}$. Let $ \widetilde{\omega}_1$ be the uniform measure on the copy of $\mathbb{FP}^1$ containing $x_1$ and $x_2$ and set $ \widetilde{\omega}_2 = \frac12 (\delta_{x_1} + \delta_{x_2} )$ (i.e., these measures are copies of $\omega_1$ and $\omega_2$). With these definitions $$\mu_{ONB} = \frac{2}{d+1}  \widetilde{\omega}_2 + \frac{d-1}{d+1} \nu, $$
and $\vartheta(x,y) = \pi$ for any $y \in \operatorname{supp}({\nu})$ and $x$ in $\operatorname{supp}(\widetilde{\omega}_1)$ (or $\operatorname{supp}(\widetilde{\omega}_2)$). Finally, set $$\mu_{1} = \frac{2}{d+1}  \widetilde{\omega}_1 + \frac{d-1}{d+1} \nu. $$Then we have
\begin{align*}
I_{F_{-1}}(\mu_{ONB}) &=
 \frac{4}{(d+1)^2}I_{F_{-1}}(\omega_2) + \frac{(d-1)^2}{(d+1)^2}I_{F_{-1}}(\nu) +  \frac{4(d-1)}{d+1} F_{-1}(-1) \\
& = \frac{4}{(d+1)^2}I_{F_{-1}}(\omega_1) + \frac{(d-1)^2}{(d+1)^2}I_{F_{-1}}(\nu) +  \frac{4(d-1)}{d+1} F_{-1}(-1)  = I_{F_{-1}}(\mu_1),
\end{align*}
hence $\mu_1$ is also a minimizer.
\end{proof}

We first observe that in the case of $\Omega = \mathbb{RP}^2$ this construction essentially yields the pole-equator measure on $\mathbb S^2$ introduced in Section \ref{s.<1}, see \eqref{eq.pole-equator}. 
We  also remark that the procedure described in the proof of Lemma \ref{lem:Behavior for geodesic distance for ONB} can be repeated and iterated.  In particular, if the conjecture holds on   $\Omega = \mathbb{RP}^3$, one could construct an alternative minimizer which  consists of two copies of the uniform measure on $\mathbb{FP}^1$. In other words, switching to the representers on the sphere, this would consist of uniform measures on two orthogonal copies of $\mathbb S^1$ in $\mathbb S^3 \subset \mathbb R^4$.  This would produce a purely one dimensional minimizer without any point masses. \\

Next we show that the measures of the type $\mu_{ONB}$ are unique energy  minimizers for $s=-2$. 

\begin{lemma}\label{lem:Behavior for geo dist squared for ONB}
If $\Omega = \mathbb{FP}^d$, then the minimizing measures of $I_{F_{-2}}$ are exactly those of the form $\mu_{ONB} = \frac{1}{d+1} \sum_{j=1}^{d+1} \delta_{e_j}$, where $\{e_1, ..., e_d\} \subset \mathbb{FP}^{d}$ is a set where all element are diameter apart, i.e. points corresponding to an orthonormal basis in $\mathbb{F}^{d+1}$.
\end{lemma}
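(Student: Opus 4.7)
The plan is to majorize the geodesic kernel $\vartheta^2$ pointwise by a constant multiple of the chordal kernel $\rho^2$ via Jordan's inequality, reducing the problem to the chordal case already handled by Theorem \ref{thm:Chordal Riesz Minimizers}, and then to run a sharp equality analysis to pinpoint $\mu_{ONB}$.

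The pointwise input is Jordan's inequality: for $\phi \in [0, \pi/2]$, $\sin\phi \geq \tfrac{2\phi}{\pi}$, with equality iff $\phi \in \{0, \pi/2\}$. Setting $\phi = \vartheta(x,y)/2 \in [0,\pi/2]$ and recalling that $\rho(x,y) = \sin(\vartheta(x,y)/2)$, I obtain
\begin{equation*}
\vartheta(x,y) \;\leq\; \pi\, \rho(x,y) \qquad \text{for all } x,y\in \Omega = \mathbb{FP}^d,
\end{equation*}
with equality iff $\vartheta(x,y) \in \{0, \pi\}$. Squaring and integrating against $\mu\times\mu$,
\begin{equation*}
\int_\Omega\!\!\int_\Omega \vartheta(x,y)^2\, d\mu(x)\,d\mu(y) \;\leq\; \pi^2 \int_\Omega\!\!\int_\Omega \rho(x,y)^2\, d\mu(x)\,d\mu(y).
\end{equation*}

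By Theorem \ref{thm:Chordal Riesz Minimizers} applied at $s=-2$, the chordal double integral on the right is bounded by $\tfrac{d}{d+1}$, with this maximal value realized (in particular) by $\mu_{ONB}$. Hence
\begin{equation*}
I_{F_{-2}}(\mu) \;=\; -\tfrac{1}{2}\!\!\int\!\!\int \vartheta^2\, d\mu\, d\mu \;\geq\; -\tfrac{d\pi^2}{2(d+1)},
\end{equation*}
and a direct computation, since all $d(d+1)$ off-diagonal pairs in $\mu_{ONB}$ contribute $\pi^2$ with weight $1/(d+1)^2$, gives $I_{F_{-2}}(\mu_{ONB}) = -\tfrac{d\pi^2}{2(d+1)}$. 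Thus $\mu_{ONB}$ is a minimizer.

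For uniqueness, suppose $\mu$ achieves this minimum, so that both inequalities above are equalities. Equality in the Jordan step forces $\vartheta(x,y)\in \{0,\pi\}$ for $(\mu\times\mu)$-a.e.\ $(x,y)$. By continuity of $\vartheta$, if there existed $x_0,y_0\in \operatorname{supp}(\mu)$ with $\vartheta(x_0,y_0) \notin \{0,\pi\}$, one could choose open neighborhoods $U\ni x_0$, $V\ni y_0$ on which $\vartheta$ remains bounded away from $\{0,\pi\}$, yielding $(\mu\times\mu)(U\times V) > 0$, a contradiction. Hence any two distinct points of $\operatorname{supp}(\mu)$ are at distance $\pi$; since pairwise diameter-separated points in $\mathbb{FP}^d$ correspond to an orthonormal system in $\mathbb{F}^{d+1}$, one has $|\operatorname{supp}(\mu)| \leq d+1$. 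Writing $\mu = \sum_{i=1}^N \lambda_i \delta_{x_i}$ with $N\leq d+1$ and $\vartheta(x_i,x_j)=\pi$ for $i\neq j$, so that $\rho(x_i,x_j)=1$, the chordal equality $\int\!\int \rho^2\, d\mu\,d\mu = \tfrac{d}{d+1}$ becomes $1-\sum_i \lambda_i^2 = \tfrac{d}{d+1}$, i.e.\ $\sum_i \lambda_i^2 = \tfrac{1}{d+1}$. Combined with $N\leq d+1$, Cauchy--Schwarz ($\sum_i\lambda_i^2 \geq 1/N$) forces $N=d+1$ and $\lambda_i = \tfrac{1}{d+1}$, i.e.\ $\mu = \mu_{ONB}$. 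The main obstacle is precisely this equality analysis: upgrading the almost-everywhere statement $\vartheta\in\{0,\pi\}$ to a genuine geometric constraint on $\operatorname{supp}(\mu)$; the continuity argument above handles it cleanly, after which the remaining step is a finite-dimensional Cauchy--Schwarz optimization.
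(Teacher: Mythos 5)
Your proof is correct and follows essentially the same route as the paper: both dominate the geodesic kernel pointwise by $\pi^2$ times the squared chordal distance with equality only at $\vartheta\in\{0,\pi\}$ (you via Jordan's inequality, the paper via a concavity computation for $h(t)=\sqrt{(1-t)/2}-\arccos(t)/\pi$), then invoke Theorem \ref{thm:Chordal Riesz Minimizers} at $s=-2$ and analyze the equality cases on the support. Your explicit a.e.-to-support upgrade and the Cauchy--Schwarz finish are just slightly different packagings of the paper's support argument and convexity/redistribution step, so nothing essential differs.
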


\begin{proof}
Let
$$h(t) := \Big( \frac{1-t}{2} \Big)^{\frac{1}{2}} -  \frac{\arccos t}{\pi} .$$
We find that
$$ h''(t) =  \frac{- \sqrt{2} \pi (1+t)^{3/2} + 8t}{8(1-t^2)^{3/2}}  $$
is negative on $[-1,1)$. This can be seen immediately for $t \leq 0$, and for $t > 0$, by considering
$$g(t):= - \sqrt{2} \pi (1+t)^{3/2} + 8t.$$
We see that $g'(t) = 8- \frac{3 \pi \sqrt{1+t}}{\sqrt{2}}$ and $g''(t) = - \frac{3 \pi}{2 \sqrt{2(1+t)}}$, so $g$ is concave with a maximum at $a = \frac{128}{9 \pi^2}-1 \approx 0.44$, where $g(a) \approx -4$. Thus, $g$ is negative on $(0,1)$ and so is $h''$, meaning that $h(t)$ is strictly concave on $(-1,1)$ and $h(-1)= h(1) = 0$, so $h$ is nonnegative on $[-1,1]$ and strictly negative on $(-1,1)$.

Thus, recalling that $R_s$ denotes the chordal Riesz kernel  defined in \eqref{eq:Chordal Riesz Def}, we find  that
\begin{equation}\label{e.linprog} 2 R_{-2}(t) = \frac{t-1}{2} \leq - \frac{(\arccos t)^{2}}{ \pi^{2}} =  2 \pi^{-2} F_{-2}(t) ,
\end{equation}
with equality if and only if $t \in \{-1, 1\}$, i.e. $\arccos(t) = \vartheta(x,y) \in \{ 0, \pi\}$, meaning that $x = y$ or $x$ and $y$ are diameter apart. Inequality \eqref{e.linprog} is illustrated in Figure \ref{f.linprog}.

\begin{figure}[h]
\includegraphics[width=.7\textwidth]{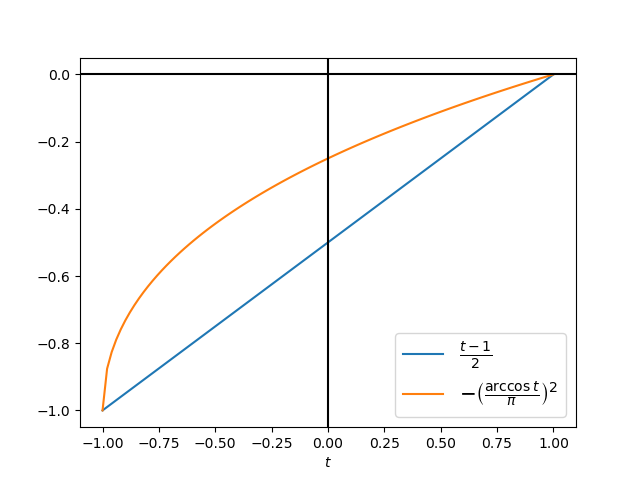}
\caption{Illustration of inequality \eqref{e.linprog}.}
\label{f.linprog}
\end{figure}

Thus, for all probability measures $\mu$ on $ \mathbb{FP}^{d}$, according to Theorem \ref{thm:Chordal Riesz Minimizers}, we have
\begin{equation}
\frac{1}{\pi^{2}} I_{F_{-2}}(\mu) \geq I_{R_{-2}}(\mu) \geq \min I_{R_{-2}} (\mu) =  I_{R_{-2}}(\sigma).
\end{equation}
The first inequality becomes an equality exactly when, for all $x,y \in \operatorname{supp}(\mu)$, $\vartheta(x,y) \in \{0, \pi\}$, which implies that the support of $\mu$ has at most $d+1$ points. The second one becomes an equality if $\mu$ minimizes $I_{R_{-2}}$,  which in view of the case $s=-2$ of Theorem \ref{thm:Chordal Riesz Minimizers} means that $\mu$ satisfies \eqref{e.bary}. One can then proceed exactly as in the end of the proof of Theorem  \ref{thm:Chordal Riesz Minimizers}  to show that the measures of the form $\mu_{ONB}$, 
and only they, minimize $I_{F_{-2}}$.
\end{proof}

The method  presented here, which relies on bounding the kernel in question from below by a positive definite kernel,  is often referred to as the {\emph{linear programming method}} \cite{DGS,CohK,CohKM}. The argument of Lemma \ref{lem:Behavior for geo dist squared for ONB} worked specifically for the kernel $F_{{-2}}$ since $\lambda= 2$ is the smallest power  such that the function $(\arccos t)^\lambda$ has a finite derivative at $t=1$,  which allows one to obtain inequality \eqref{e.linprog}. 

We now show that the analogue of the Fejes T\'oth conjecture holds  for a range of powers in all projective spaces. 

\begin{theorem}\label{t.FTlarge} Let $\Omega = \mathbb{FP}^d$.
There exists a unique $s^*  \in [-2,-1]$ such that for all $s < s^*$, $I_{F_s}$ is minimized exactly by the measures of the form $\mu_{ONB}$, and for $0 > s > s^*$, $I_{F_s}$ is not minimized by $\mu_{ONB}$.
\end{theorem}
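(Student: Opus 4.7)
The plan is to set
\[
S \;=\; \{\, s < 0 : \mu_{ONB} \text{ is a minimizer of } I_{F_s}\,\},
\]
and to prove that $S = (-\infty, s^*]$ for some $s^* \in [-2,-1]$; both conclusions of the theorem then follow immediately from this single structural fact.

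First I would combine Lemmas~\ref{lem:Behavior for geo dist squared for ONB} and~\ref{lem:Linear programming for ONB} to see that $S$ is nonempty and downward closed. Indeed, the former yields $-2 \in S$, while the latter shows that if $s_0 \in S$ (with $s_0 < 0$), then every $s < s_0$ also lies in $S$ and in fact admits $\mu_{ONB}$-type measures as its \emph{only} minimizers. Hence $S \supseteq (-\infty, -2]$ and $S$ has the form $(-\infty, s^*)$ or $(-\infty, s^*]$ for some $s^* \in [-2, 0)$.

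Next I would verify that $S$ is relatively closed in $(-\infty, 0)$, which upgrades it to $(-\infty, s^*]$. For $s < 0$ the kernel $F_s(t) = \tfrac{1}{s}(\arccos t)^{-s}$ is continuous on $[-1,1]$ (with $F_s(1)=0$) and is uniformly bounded on $[-1,1]$ as $s$ ranges over any compact subset of $(-\infty, 0)$. Dominated convergence therefore makes $s \mapsto I_{F_s}(\mu)$ continuous in $s$ for every probability measure $\mu$, and passing to the limit in $I_{F_{s_n}}(\mu) \ge I_{F_{s_n}}(\mu_{ONB})$ as $s_n \to s \in (-\infty, 0)$ gives $s \in S$. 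To bound $s^*$ from above, I would cite Proposition~\ref{p.notONB}: $\mu_{ONB}$ is not an optimizer of $I_\lambda$ for $\lambda < 1$, which under the correspondence $\lambda = -s$ forces $S \cap (-1, 0) = \emptyset$ and hence $s^* \le -1$.

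Putting these steps together, $s^* \in [-2,-1]$ and $S = (-\infty, s^*]$. Now Lemma~\ref{lem:Linear programming for ONB}, applied with the exponent $s^* < 0$, identifies the minimizers of $I_{F_s}$ for every $s < s^*$ as exactly the $\mu_{ONB}$-type measures, while the relation $s \notin S$ for $s^* < s < 0$ directly gives the nonoptimality of $\mu_{ONB}$ on that range. Uniqueness of $s^*$ is built into its definition as a supremum. The only mildly delicate ingredient is the continuity-in-$s$ step, but because we stay in the bounded regime $s < 0$ no singular-kernel machinery from Section~\ref{s.singularkernels} is needed — plain dominated convergence on $[-1,1]$ suffices.
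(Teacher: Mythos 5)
Your overall architecture is essentially the paper's own argument: the set $S$ of negative exponents at which $\mu_{ONB}$ minimizes, downward closedness via Lemma \ref{lem:Linear programming for ONB}, the membership $-2\in S$ via Lemma \ref{lem:Behavior for geo dist squared for ONB}, and $s^*=\sup S$. Your additional step showing that $S$ is relatively closed (so that $s^*\in S$) is correct for $s<0$, but it is not needed: for any fixed $s<s^*$ one may apply Lemma \ref{lem:Linear programming for ONB} at an intermediate $s_0\in S$ with $s<s_0$, so the theorem never requires deciding whether $s^*$ itself belongs to $S$.

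The genuine gap is in your proof of the upper bound $s^*\le -1$. You cite Proposition \ref{p.notONB}, but that proposition is stated and proved only for the sphere $\mathbb S^d$ with the acute-angle kernel, i.e.\ it concerns the real projective space $\mathbb{RP}^d$: its proof replaces two basis vectors by the uniform measure on a circle and uses Theorem \ref{t.d1} on $\mathbb S^1$. Theorem \ref{t.FTlarge}, however, is asserted for every $\Omega=\mathbb{FP}^d$, including $\mathbb{CP}^d$, $\mathbb{HP}^d$ and $\mathbb{OP}^2$, so your argument leaves the claim $S\cap(-1,0)=\emptyset$ unjustified for the non-real fields. The paper closes this step differently: if $s^*>-1$, then Lemma \ref{lem:Linear programming for ONB}, applied at some $s_0\in S$ with $s_0>-1$, would force the $\mu_{ONB}$-type measures to be the \emph{only} minimizers of $I_{F_{-1}}$, contradicting Lemma \ref{lem:Behavior for geodesic distance for ONB}, which (conditionally on $\mu_{ONB}$ being a minimizer at $s=-1$) produces minimizers with non-discrete support; this works on all projective spaces. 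Alternatively, you could repair your route by generalizing Proposition \ref{p.notONB} to $\mathbb{FP}^d$: replace two elements of the orthonormal basis by the uniform measure on the copy of $\mathbb{FP}^1$ through them and invoke part \eqref{i} of Theorem \ref{t.d1expand}, exactly as in the proof of Lemma \ref{lem:Behavior for geodesic distance for ONB}. As written, though, the citation does not cover the full generality of the statement.
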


\begin{proof}

The existence and uniqueness of such an $s^*$ and the fact that $s^* \geq -2$ follows from Lemmata \ref{lem:Linear programming for ONB} and \ref{lem:Behavior for geo dist squared for ONB}. We see that $s^*$ cannot be greater than $-1$, as that would imply that $I_{F_{-1}}$ is minimized only by measures of the form $\mu_{ONB}$, due Lemma \ref{lem:Linear programming for ONB}, which would contradict Lemma \ref{lem:Behavior for geodesic distance for ONB}.
\end{proof}

We believe that the condition $s^* \in [-2,-1]$ in Theorem \ref{t.FTlarge} should be replaced by $s^* \in (2,-1]$. A heuristic explanation lies in the fact that at the phase transition energies usually have large families of  minimizers  with very different structures, while for $s=-2$, according to Lemma \ref{lem:Linear programming for ONB}, the energy is minimized only by $\mu_{ONB}$. Hence, it would be reasonable to guess that the phase transition cannot happen at $s=-2$. In fact, this was proved  in \cite[Lemma 1.1 and Corollary 3.8]{LimM22} for $\Omega = \mathbb{RP}^d$ using precisely this heuristic, and it appears that the proof generalizes to other projective spaces (however, we choose not to pursue the details in this paper). 

Due to Lemmata  \ref{lem:Linear programming for ONB} and \ref{lem:Behavior for geo dist squared for ONB}, the integral  Fejes T\'oth conjecture (Conjecture \ref{c.IFT})  is equivalent to the fact that $s^* = -1$ for $\Omega = \mathbb{RP}^d$. We conjecture that, in fact, $s^*=-1$ in all  projective spaces.


\section{Results of numerical experiments on $\mathbb S^2$: $0<\lambda<1$}\label{s.<1}
In this section we discuss  results of numerical experiments, which shed some light on the behavior of minimizers below the value $\lambda =1$ (or equivalently above $s=-1$) which corresponds to the Fejes T\'oth conjecture. To be more precise, these results indicate that this behavior is rather mysterious.

Since the geometric setting of the sphere $\mathbb S^d$ is much more intuitive and more conducive to setting up the numerics, in this section we shall switch back to the original notation on the sphere, i.e. we shall  use the kernel $\theta (x,y)$   on $\mathbb S^d$ as defined in \eqref{e.theta} as well as the energy integral $I_\lambda$, see \eqref{e.def},  rather than  $\vartheta (x,y)$ and $I_{F_s}$ on $\mathbb{RP}^d$ as in most other parts of the paper (recall that $s = - \lambda$). \\

Most of our numerical results will be restricted to $\mathbb S^2$. Theorem \ref{t.main} shows that   $\sigma$ is a unique minimizer of the energy $I _\lambda (\mu)$ for $-2< \lambda \leq 0$. At the same time, the continuous version of the  Fejes T\'oth conjecture  would imply that $\mu_{ONB}$ is  a maximizer of $I_\lambda$ for all $\lambda \geq 1$ (unique for $\lambda >1$), see \S\ref{sec.discrete}. However, it turns out that neither of these measures  provides an optimizer on $\mathbb S^2$  for $0.6<\lambda  < 1$. 

Numerical computations show that the Gegenbauer coefficients of the kernel $f_\lambda (t) = ( \arccos |t| )^\lambda$ are not all nonpositive when $0.6 < \lambda \leq 1$, which excludes the possibility of the uniform measure being the optimizer of $I_\lambda$ on $\mathbb S^2$ in this range according to Theorem \ref{thm:uniform measure min, continuous}. (We would like thank Peter Grabner for some of these computations.) At least for values of $\lambda$ close to $1$, this statement is  made rigorous in Lemma \ref{l.lambda<1}.


This suggests, that unlike most of the examples given in \S\ref{s.typical}, the distance integrals  \eqref{e.di}  (i.e. the geodesic distance Riesz energies on the real projective space) undergo more than one phase transition as $\lambda$ changes.
The exact structure of  optimizers  in the range $0.6< \lambda < 1$   remains elusive even on the level of conjectures. 
We used numerical experiments to investigate potential optimizers on $\mathbb S^2$ in this range. 

We start by introducing some potential candidates for maximizing measures. We shall say that a Borel probability measure $\mu$  on $\mathbb S^2$ is  a {\emph{``pole-equator'' measure}} if for some $w\in [0,1]$ and $p \in \mathbb S^2$ it can be represented in the form
\begin{equation}\label{eq.pole-equator}
\mu = w\delta_p + (1-w)\nu_{p^{\perp}}, 
\end{equation} where  $\nu$ is a Borel probability measure  on $\mathbb S^1$ and $\nu_{p^{\perp}}$ is its copy  supported on $p^\perp = \{x \in \mathbb S^2 | \, x\cdot p = 0\}$. 
A simple calculation shows that
\begin{proposition}\label{p.poleequator}
If a probability measure $\nu$ maximizes $I_1$ on $\mathbb S^1$, then $I_1 \big(\frac{1}{3}\delta_p + \frac{2}{3}\nu_{p^{\perp}} \big)=I_1(\mu_{ONB})$ on $\mathbb S^2$.
\end{proposition}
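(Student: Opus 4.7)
The plan is to compute both sides directly by expanding $I_{1}(\mu)$ via bilinearity and exploiting the fact that $\theta(p,y) = \pi/2$ for every $y \in p^\perp$, together with the known value of $I_1$ at maximizers on $\mathbb{S}^1$.

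First, I would establish the reference value $I_1(\mu_{ONB})$ on $\mathbb{S}^2$. Writing $\mu_{ONB} = \frac{1}{3}(\delta_{e_1}+\delta_{e_2}+\delta_{e_3})$ for an orthonormal basis $\{e_1,e_2,e_3\}$, the diagonal terms vanish and each of the six off-diagonal terms contributes $\theta(e_i,e_j) = \pi/2$, so $I_1(\mu_{ONB}) = \frac{1}{9} \cdot 6 \cdot \frac{\pi}{2} = \frac{\pi}{3}$.

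Next, I would expand $I_1(\mu)$ for $\mu = \frac{1}{3}\delta_p + \frac{2}{3}\nu_{p^\perp}$ into four pieces:
\begin{align*}
I_1(\mu) &= \tfrac{1}{9}\theta(p,p) + \tfrac{4}{9}\int_{p^\perp}\theta(p,y)\, d\nu_{p^\perp}(y) + \tfrac{4}{9}\int_{p^\perp}\int_{p^\perp}\theta(x,y)\, d\nu_{p^\perp}(x)\, d\nu_{p^\perp}(y).
\end{align*}
The first term is $0$. For the second term, if $y \in p^\perp$ then $p\cdot y = 0$ and hence $\theta(p,y) = \arccos 0 = \pi/2$, giving $\frac{4}{9}\cdot\frac{\pi}{2} = \frac{2\pi}{9}$. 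For the third term, note that $\theta(x,y) = \arccos|x\cdot y|$ depends only on the inner product of $x$ and $y$ as vectors in the ambient Euclidean space, and the great circle $p^\perp \subset \mathbb{S}^2$ carries the standard $\mathbb{S}^1$ inner-product structure; therefore the restriction of the $\mathbb{S}^2$ kernel $\theta$ to $p^\perp \times p^\perp$ coincides (under the obvious isometric identification $p^\perp \cong \mathbb{S}^1$) with the $\mathbb{S}^1$ kernel $\theta$, so this term equals $\frac{4}{9}I_1(\nu)$, where $I_1(\nu)$ is computed on $\mathbb{S}^1$.

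Finally, by Theorem~\ref{t.d1}\eqref{iii1} (or equivalently Theorem~\ref{t.d1expand}\eqref{ii}), every maximizer of $I_1$ on $\mathbb{S}^1$ achieves the same value; since the uniform measure $\sigma$ on $\mathbb{S}^1$ is among the maximizers, a short direct integration $\frac{1}{2\pi}\int_0^{2\pi}\arccos|\cos\alpha|\,d\alpha = \frac{\pi}{4}$ (or, alternatively, evaluating at $\mu_{ONB}^{\mathbb{S}^1} = \frac12(\delta_{e_1}+\delta_{e_2})$) gives $I_1(\nu) = \pi/4$. Assembling the pieces,
\begin{equation*}
I_1(\mu) = 0 + \frac{2\pi}{9} + \frac{4}{9}\cdot\frac{\pi}{4} = \frac{2\pi}{9} + \frac{\pi}{9} = \frac{\pi}{3} = I_1(\mu_{ONB}).
\end{equation*}
The argument is entirely computational; the only conceptual point is the identification of the restricted kernel on a great circle with the $\mathbb{S}^1$ kernel, which follows immediately from the definition \eqref{e.theta}, so there is no serious obstacle.
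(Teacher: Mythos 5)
Your proof is correct and follows essentially the same route as the paper: expand the energy by bilinearity, use $\theta(p,y)=\pi/2$ for $y\in p^\perp$ to get the cross-term $\frac{2\pi}{9}$, and invoke the solved $d=1$ case to pin down $I_1(\nu)$. The only cosmetic difference is that you evaluate both sides explicitly as $\frac{\pi}{3}$, whereas the paper substitutes $\nu$ by $\frac12\delta_{p'}+\frac12\delta_{p''}$ and reassembles the measure into $\mu_{ONB}$ without computing numerical values.
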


\begin{proof}
Choose $p'$, $p'' \in p^\perp$ so as to complete $p$ to an orthonormal basis of $\mathbb R^3$.  Since the Fejes T\'oth conjecture holds on $\mathbb S^1$,  $I_1 \big(\frac12 \delta_{p'} + \frac12 \delta_{p''} \big) = I_1 (\nu)$
for any $\nu$ which maximizes $I_1$ on $\mathbb S^1$. It is then easy to see that on $\mathbb S^2$
\begin{align}
\nonumber I_1 \left(\frac{1}{3}\delta_p + \frac{2}{3}\nu_{p^{\perp}} \right) &= \frac{2\pi}{9} + \frac49 I_1 (\nu) = \frac{2\pi}{9} + \frac49 I_1 \left( \frac12 \delta_{p'} + \frac12 \delta_{p''} \right)\\ & =  I_1 \left(\frac{1}{3}\delta_p + \frac{2}{3} \left( \frac12 \delta_{p'} + \frac12 \delta_{p''} \right) \right)=I_1(\mu_{ONB}). \label{eq.backtoONB}
\end{align}
\end{proof}

Therefore, if the integral   Fejes T\'oth conjecture is true on $\mathbb S^2$ and  $I_1$ is indeed maximized by $\mu_{ONB}$, it would imply that a variety of pole-equator measures also  maximize $I_1$. Indeed, as discussed in Section \ref{s.d1},  possible choices of $\nu$ include $\sigma$, $\mu_{ONB}$, and, more generally, any orthogonally symmetric measure on $\mathbb S^1$. In particular, a pole-equator measure with $\nu = \sigma$, would yield a maximizer on $\mathbb S^2$, whose support has one-dimensional components, in contrast to the zero-dimensional $\mu_{ONB}$, see also Lemma \ref{lem:Behavior for geodesic distance for ONB}. We would like to remark that it is a typical situation for a phase transition to yield a large pool of optimizers of very different structure, which gives further evidence supporting the Fejes T\'oth conjecture.

Thus  it is natural to investigate pole-equator measures as possible  maximizers for $\lambda < 1$.  It is easy to determine which measure is optimal among all pole-equator measures. 

\begin{proposition}
Let $\sigma$ be the uniform surface measure on $\mathbb S^1$. Then for any $0 < \lambda < 1$, the measure $\frac{\lambda}{2\lambda+1}\delta_p + \frac{\lambda+1}{2\lambda+1} \sigma_{p^\perp}$ maximizes $I_\lambda$ among pole-equator measures.
\end{proposition}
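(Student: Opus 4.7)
Plan of proof. The key observation is that a pole-equator measure $\mu = w\delta_p + (1-w)\nu_{p^\perp}$ decomposes the energy cleanly into three pieces: the pure-pole contribution (which vanishes since $\theta(p,p)=0$), the pole-equator cross term (which is constant along $p^\perp$ since every $y \in p^\perp$ satisfies $p\cdot y = 0$, giving $\theta(p,y) = \pi/2$), and the pure-equator contribution. For the last piece, once we identify $p^\perp \cong \mathbb{S}^1$, the kernel $\theta$ restricted to $p^\perp$ is precisely the acute-angle kernel on $\mathbb{S}^1$ (i.e.\ the one from \eqref{e.theta} on the circle). Consequently,
\begin{equation*}
I_\lambda(\mu) = 2w(1-w) \left(\tfrac{\pi}{2}\right)^\lambda + (1-w)^2\, I_\lambda^{\mathbb{S}^1}(\nu),
\end{equation*}
where $I_\lambda^{\mathbb{S}^1}$ denotes the same energy taken on $\mathbb{S}^1$.

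Next, I would separate the optimization into two independent problems: first choose $\nu$ on $\mathbb{S}^1$ to maximize the last term, then optimize over $w \in [0,1]$. For the first step, part \eqref{ii1} of Theorem \ref{t.d1} applies directly: for $0 < \lambda < 1$, the uniform measure $\sigma$ on $\mathbb{S}^1$ is the unique maximizer of $I_\lambda^{\mathbb{S}^1}$. I would then compute $I_\lambda^{\mathbb{S}^1}(\sigma)$ explicitly by fixing one variable and parametrizing the second: exploiting that $\arccos|\cos t| = t$ on $[0,\pi/2]$ and using the fourfold symmetry of the integrand, one obtains
\begin{equation*}
I_\lambda^{\mathbb{S}^1}(\sigma) = \frac{1}{2\pi}\cdot 4\int_0^{\pi/2} t^\lambda\, dt = \frac{(\pi/2)^\lambda}{\lambda+1}.
\end{equation*}

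Substituting back, write $c = (\pi/2)^\lambda$ and
\begin{equation*}
I_\lambda(\mu) = c\left[2w(1-w) + \frac{(1-w)^2}{\lambda+1}\right] =: c\, g(w).
\end{equation*}
This is a single-variable problem. Computing $g'(w) = 2(1-2w) - \frac{2(1-w)}{\lambda+1}$ and setting $g'(w) = 0$ yields, after simplification, $(\lambda+1)(1-2w) = 1-w$, hence
\begin{equation*}
w^* = \frac{\lambda}{2\lambda+1}, \qquad 1 - w^* = \frac{\lambda+1}{2\lambda+1}.
\end{equation*}
The second derivative $g''(w) = -4 + \frac{2}{\lambda+1} < 0$ for $\lambda > 0$ confirms this is a maximum.

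There is no serious obstacle here; the argument is essentially a two-step optimization. The only nontrivial input is invoking Theorem \ref{t.d1}\eqref{ii1} (which is the genuinely hard ingredient and was already proved earlier in the paper). One small point to record carefully is that the factorization works only because $\theta(p, y)$ is \emph{identically} equal to $\pi/2$ for every $y \in p^\perp$, which makes the cross term independent of $\nu$ and hence allows the two optimizations to decouple; this is special to the pole-equator configuration and would fail for more general product-type measures.
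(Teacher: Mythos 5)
Your proposal is correct and follows essentially the same route as the paper: decompose the energy of $w\delta_p+(1-w)\nu_{p^\perp}$ into the cross term $2w(1-w)(\pi/2)^\lambda$ plus $(1-w)^2 I_\lambda(\nu)$, invoke Theorem \ref{t.d1}\eqref{ii1} to take $\nu=\sigma$ with $I_\lambda(\sigma)=\frac{(\pi/2)^\lambda}{\lambda+1}$ on $\mathbb S^1$, and maximize the resulting quadratic in $w$ to get $w^*=\frac{\lambda}{2\lambda+1}$. Your explicit computation of $I_\lambda^{\mathbb S^1}(\sigma)$ and the second-derivative check are just slightly more detailed versions of steps the paper states without proof.
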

\begin{proof}
Let $\mu = w\delta_p+(1-w)\nu_{p^\perp}$ be an arbitrary pole-equator measure. Then we have
\begin{align*}
I_\lambda(\mu) = 0\cdot w^2+2w(1-w)\left(\frac{\pi}{2}\right)^\lambda+ (1-w)^2I_\lambda (\nu)
\end{align*}
By Theorem \ref{t.d1},
 $I_\lambda (\nu)$ is maximized when $\nu = \sigma$ on $\mathbb S^1$ . 
Since $I_\lambda (\sigma) = \frac{1}{\lambda+1}\left(\frac{\pi}{2}\right)^{\lambda}$ on $\mathbb S^1$, optimizing  the quadratic expression shows that $w = \frac{\lambda}{2\lambda+1}$ maximizes $I_\lambda$.
\end{proof}

Moreover, we can use the same ideas to show that $\mu_{ONB}$ cannot be a maximizer  of $I_\lambda$ for $0<\lambda <1$ in all dimensions. 

\begin{proposition}\label{p.notONB}
Let $0< \lambda <1$. Then there exists a measure $\mu^* \in \mathcal P (\mathbb S^d)$ such that $I_\lambda (\mu^*) > I_\lambda (\mu_{ONB})$. 
\end{proposition}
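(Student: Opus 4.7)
The plan is to construct $\mu^*$ as a ``pole-equator'' type measure and proceed by induction on $d\ge 1$, exploiting the fact that at the weight $w=1/(d+1)$ such a measure with equatorial ingredient $\mu_{ONB}^{(d-1)}$ on $\mathbb S^{d-1}$ \emph{coincides} with $\mu_{ONB}$ on $\mathbb S^d$; therefore any strict improvement on the lower-dimensional equator lifts without loss to $\mathbb S^d$. I first compute $I_\lambda(\mu_{ONB})=\tfrac{d}{d+1}\left(\tfrac{\pi}{2}\right)^\lambda$, since $\theta(e_i,e_j)=\pi/2$ for all $i\neq j$ and the self-pairings vanish.

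For the base case $d=1$, Theorem \ref{t.d1}\eqref{ii1} immediately gives $\mu^*=\sigma$; one can also verify this directly by the explicit evaluation $I_\lambda(\sigma_{\mathbb S^1})=\tfrac{(\pi/2)^\lambda}{\lambda+1}$ (a one-line integral using $\theta(x,y)=\min(\alpha,\pi-\alpha)$ on the circle), which strictly exceeds $\tfrac12(\pi/2)^\lambda=I_\lambda(\mu_{ONB}^{(1)})$ precisely when $\lambda<1$.

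For the inductive step, suppose $\nu^*$ on $\mathbb S^{d-1}$ satisfies $I_\lambda(\nu^*)>\tfrac{d-1}{d}(\pi/2)^\lambda$. Fix $p\in\mathbb S^d$, let $\nu^*_{p^\perp}$ be a copy of $\nu^*$ supported on the equatorial $(d-1)$-sphere $p^\perp$, and set
\[
\mu^* \;=\; \frac{1}{d+1}\,\delta_p \;+\; \frac{d}{d+1}\,\nu^*_{p^\perp}.
\]
Since $\theta(p,y)=\pi/2$ for every $y\in p^\perp$, the cross term is explicit and
\[
I_\lambda(\mu^*)\;=\;\frac{2d}{(d+1)^2}\Big(\frac{\pi}{2}\Big)^\lambda+\frac{d^2}{(d+1)^2}\,I_\lambda(\nu^*).
\]
The inductive hypothesis bounds the last term strictly below by $\tfrac{d(d-1)}{(d+1)^2}(\pi/2)^\lambda$, yielding
\[
I_\lambda(\mu^*)\;>\;\frac{2d+d(d-1)}{(d+1)^2}\Big(\frac{\pi}{2}\Big)^\lambda\;=\;\frac{d}{d+1}\Big(\frac{\pi}{2}\Big)^\lambda\;=\;I_\lambda(\mu_{ONB}),
\]
which closes the induction.

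There is no real obstacle: the identity $2d+d(d-1)=d(d+1)$ is precisely what makes the weight $1/(d+1)$ the one for which a point mass plus an ONB on the equator reassembles an ONB on $\mathbb S^d$, so the strict inductive gain is preserved exactly at every step. The only modest subtlety is recognizing this weight \emph{a priori}; one can justify the choice either by direct optimization of the quadratic $w\mapsto 2w(1-w)(\pi/2)^\lambda+(1-w)^2\cdot\tfrac{d-1}{d}(\pi/2)^\lambda$ at $\lambda=1$, or simply by inspection of how an orthonormal basis of $\mathbb R^{d+1}$ decomposes as $p$ together with an orthonormal basis of $p^\perp$.
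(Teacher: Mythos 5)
Your proof is correct and essentially the same as the paper's: both rest on the strict superiority of $\sigma$ over $\mu_{ONB}$ on $\mathbb S^1$ for $0<\lambda<1$ (Theorem \ref{t.d1}), and unwinding your induction from the base case yields exactly the paper's measure $\mu^* = \frac{1}{d+1}\sum_{i=1}^{d-1}\delta_{e_i}+\frac{2}{d+1}\sigma_{\{e_1,\dots,e_{d-1}\}^\perp}$ with the same energy computation. The only difference is presentational: you peel off one basis vector at a time inductively, whereas the paper replaces two basis vectors by the uniform circle measure in a single step.
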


\begin{proof}
We construct the measure $\mu^*$ explicitly by replacing two elements of an orthonormal basis by a uniform measure on a copy of $\mathbb S^1$ containing those two elements, i.e. if $e_1, \dots , e_{d+1}$ is an orthonormal basis of $\mathbb R^{d+1}$, we set
\begin{equation*}
\mu^* = \frac{1}{d+1} \sum_{i=1}^{d-1} \delta_{e_i} + \frac{2}{d+1} \sigma_{\{e_1, \dots, e_{d-1}  \}^\perp}.
\end{equation*} 
Then a computation almost identical to \eqref{eq.backtoONB} finishes the proof since $\sigma$ uniquely (up to symmetries) maximizes $I_\lambda$ on $\mathbb S^1$ for  $0< \lambda <1$, which follows from Theorem \ref{t.d1} (or  part \eqref{i} of Theorem \ref{t.d1expand} with $\Omega = \mathbb{RP}^1$).  Observe that similar constructions were used in the proof of Lemma \ref{lem:Behavior for geodesic distance for ONB}.
\end{proof}

Since  we know from Theorem \ref{t.main} that the uniform measure $\sigma$ on $\mathbb S^2$  optimizes  $I_\lambda$ at least for $\lambda \leq 0$,  even though the numerics discussed above suggests that it is not going to be an optimizer for $\lambda \in (0.6,1)$, it is natural to  numerically compare the values of the energy   $I_\lambda$   with  $\lambda \in (0,1)$ for  $\sigma$ and the optimal pole equator measure.  One can  see that the  change   occurs around $\lambda_0 = .76$, i.e. $I_{\lambda} (\sigma)$ becomes larger for $0< \lambda < \lambda_0$, while  pole-equator measures yield larger values for  $\lambda_0 < \lambda <1$, see Figure \ref{fig:IalphaPlot}.

 We ran numerical experiments to attempt to understand the structure of  maximizing measures  for $\alpha$ near $.76$. For our numerical experiments, we used the cyipopt python package. We replaced the energy integral \eqref{e.def}  with a  sum over $200$ points (discrete energy)  optimizing their location on the sphere as to maximize  the energy, and we ran $100$ trials. Additionally, we normalized the kernel $\theta$ as $\Lambda(x,y) = \left(\frac{2}{\pi}\arccos(|x\cdot y|)\right)^\lambda$ so that the values are between 0 and 1. The results are shown in Figure \ref{fig:experimentvisuals}.
 
\begin{figure}[h]
\begin{subfigure}[b]{.49\linewidth}
\centering
\includegraphics[width=.6\textwidth]{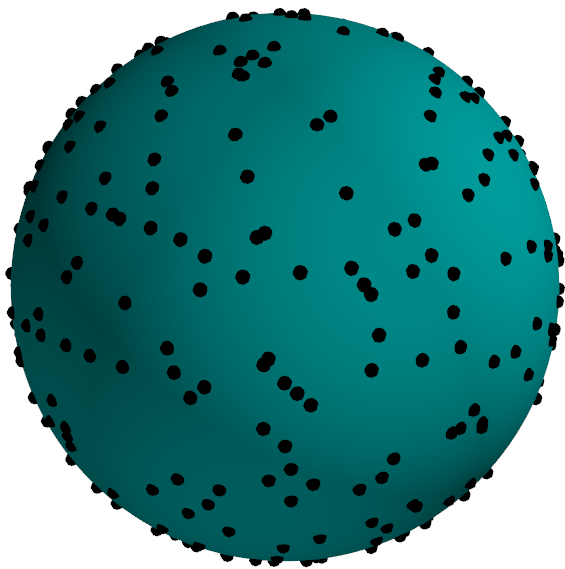}
\caption{$\mu_{.70}$}
\end{subfigure}\hfill
\begin{subfigure}[b]{.49\linewidth}
\centering
\includegraphics[width=.6\textwidth]{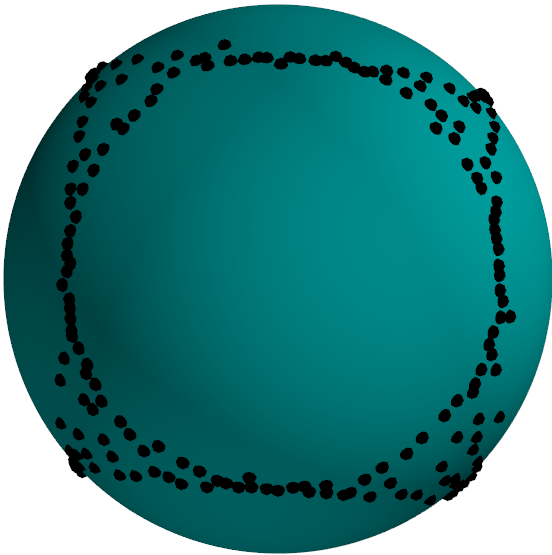}
\caption{$\mu_{.74}$}
\end{subfigure}
\begin{subfigure}[b]{.49\linewidth}
\centering
\includegraphics[width=.6\textwidth]{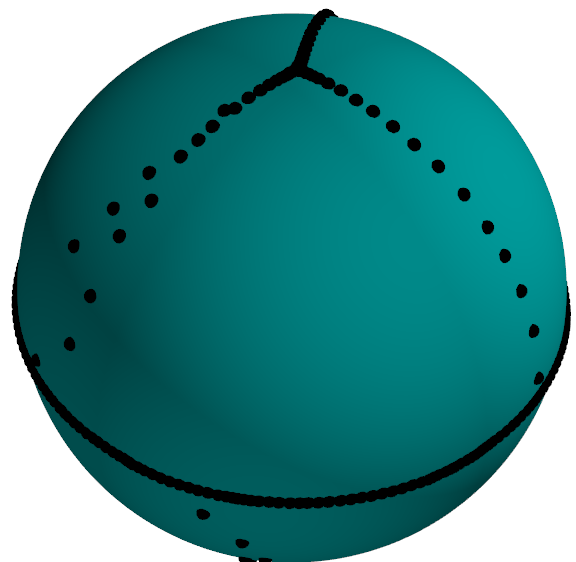}
\caption{$\mu_{.78}$}
\end{subfigure}
\begin{subfigure}[b]{.49\linewidth}
\centering
\includegraphics[width=.6\textwidth]{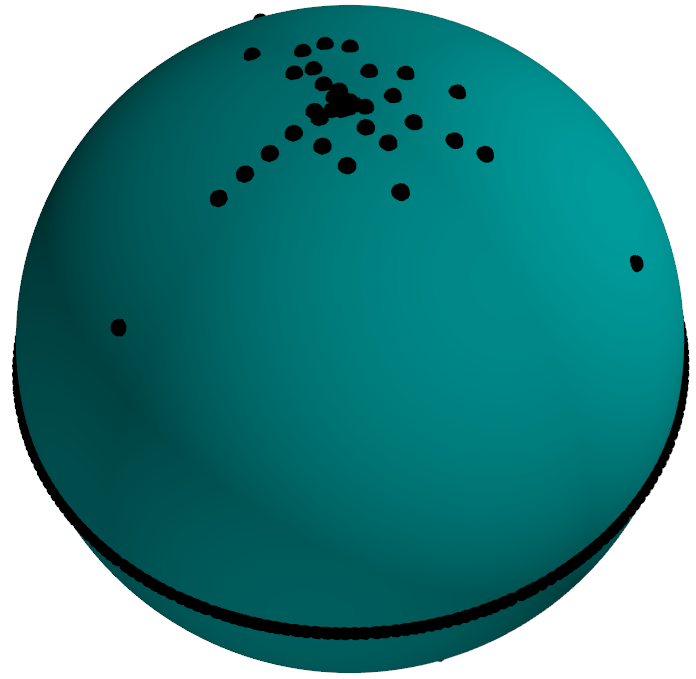}
\caption{$\mu_{.82}$}
\end{subfigure}
\caption{The results of the numerical experiments are given by $\mu_\lambda$}
\label{fig:experimentvisuals}
\end{figure}
\par
At $\lambda = .82$, this produced a point distribution that resembles the pole-equator measure, although some of the points are not exactly at the pole. We see a similar result for $\lambda = .78$, this time with more mass spread out between the pole and equator. Between $\lambda = .72$ and $\lambda = .78$, the experiment produces a 6-sided structure similar to a cube that is like neither $\sigma$ nor a pole-equator measure. Finally, at $\lambda = .72$ and below, the results  are spread out over $\mathbb S^2$  (hence, they somewhat resemble $\sigma$) although the distribution is visibly non-uniform. 
\par

We can then plot the $I_\lambda$ values for each of these measures. The results are shown in Figure \ref{fig:IalphaPlot}. Interestingly, we see that the results of our experiment do not always seem to produce the best optimizers. For example, $I_{.78}(\mu_{.82}) > I_{.78}(\mu_{.78})$ . So it seems likely that in this range, there are many measures which are local maximizers, but do not turn out to be global maximizers, which is quite common in energy optimization.
\begin{figure}[h]
\includegraphics[width=.7\textwidth]{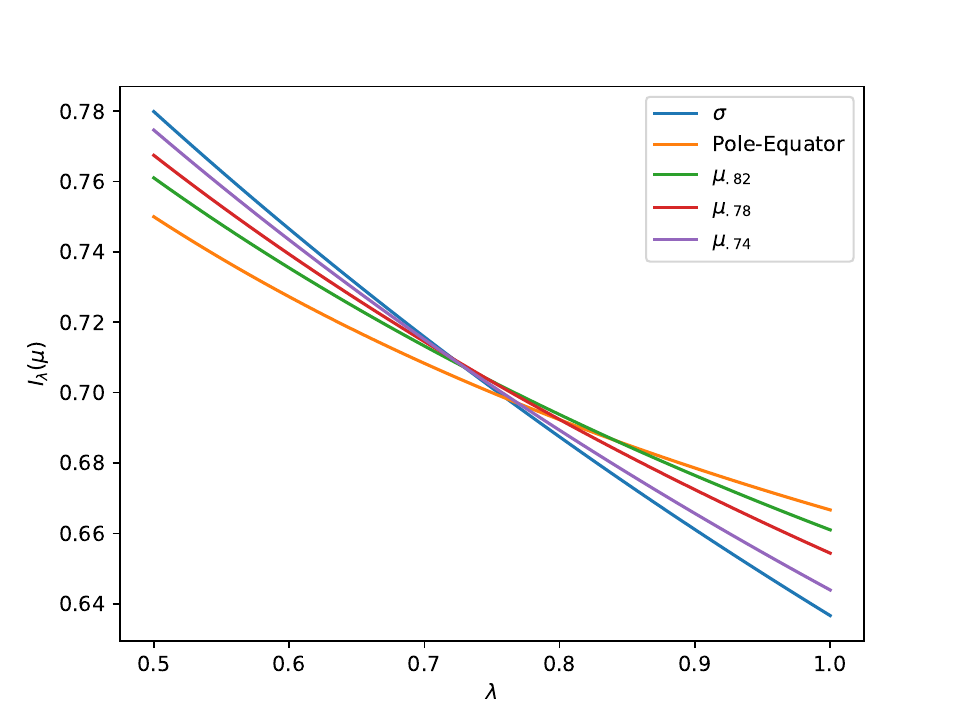}
\caption{$I_\lambda(\mu)$ for various measures}
\label{fig:IalphaPlot}
\end{figure}
\par
In conclusion, there is a wide variety of possible maximizers for $0.6 < \lambda \leq 1$ on $\mathbb{S}^2$. In particular, the range $[0.72,0.82]$ seems to have many local maxima, and is a strong candidate for a phase transition between measures similar to $\sigma$ and measure similar to pole-equator measures.

Finally, we want to note that numerical computations do appear to indicate that the Gegenbauer coefficients of $f_\lambda (t) = ( \arccos |t| )^\lambda$ are  negative when $0<\lambda<0.59$, which would imply  that $\sigma$ maximizes the energy $I_\lambda$ on $\mathbb S^2$ in this range, indicating that the range $-d < \lambda \le -(d-2)$ in Theorem \ref{t.main} is probably not sharp.


\section{Appendix}

In the Appendix we prove some technical statements, which at first glance might seem almost obvious, but upon more careful examination require proof. Some of these statements are much easier on the sphere, but less evident on more general spaces. This is mostly due to the abundance of literature about analysis on the sphere (in particular, spherical harmonics), and the scarcity of analogous literature on projective spaces. 

As before, let $\Omega$ be one of the compact connected two-point homogeneous spaces, i.e. $\mathbb{RP}^d$, $\mathbb{CP}^d$, $\mathbb{HP}^d$ ($d\ge 1 $), or $ \mathbb{OP}^d$ with $d=1$ or $2$.   We begin  by recalling the so-called {\emph{addition formula}}, see e.g. \cite{AndDGMS}. For $k\in \mathbb N$, let $Y_{k,1},\dots, Y_{k,m_k}$ denote real-valued functions on $\Omega$ which form an orthonormal basis of the eigenspace $V_k$ of the Laplace operator on $\Omega$ corresponding to the $k^{th}$ eigenvalue, where $m_k = \dim (V_k)$. Then for each $x, y \in \Omega$
 \begin{equation}\label{eq:addition}
 \sum_{m=1}^{m_k} Y_{k,m} (x) Y_{k,m} (y) = \frac{m_k}{P_k^{(\alpha,\beta)} (1) }  P_k^{(\alpha,\beta)} \big(\cos \vartheta (x,y) \big).
 \end{equation}
 
 We provide an immediate corollary to the addition formula.
 
 \begin{lemma}\label{l.convo}
 Let $k\in N$. Then for every $x,y \in \Omega$
 \begin{equation}\label{eq.convo}
 \int_{\Omega}  P_k^{(\alpha,\beta)} \big(\cos \vartheta (x,z) \big)  P_k^{(\alpha,\beta)} \big(\cos \vartheta (z,y) \big) d\sigma (z ) =  \frac{P_k^{(\alpha,\beta)} (1) }{m_k} \, P_k^{(\alpha,\beta)} \big(\cos \vartheta (x,y) \big). 
 \end{equation}
 \end{lemma}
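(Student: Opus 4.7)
The strategy is to reduce the identity to the orthonormality of the basis $\{Y_{k,m}\}_{m=1}^{m_k}$ of the eigenspace $V_k$ via the addition formula \eqref{eq:addition}. First I would apply \eqref{eq:addition} to each of the two Jacobi factors on the left-hand side of \eqref{eq.convo}, writing
\begin{equation*}
P_k^{(\alpha,\beta)}\bigl(\cos\vartheta(x,z)\bigr) = \frac{P_k^{(\alpha,\beta)}(1)}{m_k}\sum_{m=1}^{m_k} Y_{k,m}(x)\,Y_{k,m}(z),
\end{equation*}
and analogously for $P_k^{(\alpha,\beta)}(\cos\vartheta(z,y))$ with summation index $m'$. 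Substituting these expansions into the integrand and interchanging the finite sums with the integral yields
\begin{equation*}
\int_{\Omega} P_k^{(\alpha,\beta)}\bigl(\cos\vartheta(x,z)\bigr) P_k^{(\alpha,\beta)}\bigl(\cos\vartheta(z,y)\bigr)\,d\sigma(z)
= \left(\frac{P_k^{(\alpha,\beta)}(1)}{m_k}\right)^{\!2} \sum_{m,m'=1}^{m_k} Y_{k,m}(x)\,Y_{k,m'}(y) \int_{\Omega} Y_{k,m}(z)\,Y_{k,m'}(z)\,d\sigma(z).
\end{equation*}

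Next I would invoke the fact that $\{Y_{k,m}\}$ is an orthonormal system in $L^2(\Omega,\sigma)$, so the inner integral equals $\delta_{m,m'}$. This collapses the double sum to a single sum over $m$, leaving
\begin{equation*}
\left(\frac{P_k^{(\alpha,\beta)}(1)}{m_k}\right)^{\!2} \sum_{m=1}^{m_k} Y_{k,m}(x)\,Y_{k,m}(y).
\end{equation*}
Finally, applying the addition formula \eqref{eq:addition} once more — this time in the reverse direction to collapse the sum back into a Jacobi polynomial — produces
\begin{equation*}
\frac{P_k^{(\alpha,\beta)}(1)}{m_k}\,P_k^{(\alpha,\beta)}\bigl(\cos\vartheta(x,y)\bigr),
\end{equation*}
which is exactly the right-hand side of \eqref{eq.convo}.

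There is essentially no obstacle here: the proof is a direct three-line manipulation combining the addition formula with the $L^2(\sigma)$-orthonormality of Laplacian eigenfunctions, and all interchanges of sum and integral are trivially justified because the sums are finite. The only (very mild) point worth mentioning is that one should confirm the normalization convention: the $Y_{k,m}$ are chosen orthonormal with respect to the \emph{probability} measure $\sigma$, which is precisely the setting adopted throughout the paper, so no extra volume factor appears.
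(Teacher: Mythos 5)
Your proposal is correct and follows exactly the paper's argument: expand both Jacobi factors via the addition formula \eqref{eq:addition}, use $L^2(\Omega,\sigma)$-orthonormality of the $Y_{k,m}$ to collapse the double sum, and apply \eqref{eq:addition} once more to recover the right-hand side of \eqref{eq.convo}. No gaps.
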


   \begin{proof}
   We apply the addition formula \eqref{eq:addition} twice inside the integral in the left hand side of \eqref{eq.convo}, make use of orthonormality, and apply \eqref{eq:addition} once again. 
   \begin{align*}
    \int_{\Omega}  P_k^{(\alpha,\beta)} \big(\cos \vartheta (x,z) \big) &  P_k^{(\alpha,\beta)} \big(\cos \vartheta (z,y) \big) d\sigma (z ) \\ & =  \left( \frac{P_k^{(\alpha,\beta)} (1) }{m_k}  \right)^2 \int_\Omega \sum_{i,j =1}^{m_k} Y_{k,i} (x) Y_{k,i} (z) Y_{k,j} (z) Y_{k,j} (y)  d\sigma (z)\\
    &   =  \left( \frac{P_k^{(\alpha,\beta)} (1) }{m_k}  \right)^2 \,\,  \sum_{i =1}^{m_k} Y_{k,i} (x)  Y_{k,i} (y)     \int_\Omega Y^2_{k,i} (z)  d\sigma (z)\\
    & =  \frac{P_k^{(\alpha,\beta)} (1) }{m_k}  P_k^{(\alpha,\beta)} \big(\cos \vartheta (x,y) \big).
   \end{align*}
   \end{proof}
   On the sphere relation \eqref{eq.convo} could be immediately deduced from the Funk--Hecke formula, however, we were not able to find its analogue for projective spaces stated in the literature (although it would be a straightforward generalization). \,\,\,

 We now turn to the proof of the useful technical statements.  The first lemma shows that the condition $ I_{P_k^{(\alpha,\beta)} } (\mu ) =  0$ for all $k\ge 1$ uniquely defines the uniform measure $\sigma$ among all Borel probability measures on $\Omega$. This statement has been used several times, e.g., in the proofs of Proposition \ref{prop:sigma min abs cont measures bound density} and  Theorem  \ref{thm:sigma a minimizer for epsilon convergence}.

 \begin{lemma}\label{l.App1}
 Let $\mu$ be a Borel probability measure on $\Omega$.  Assume that for each $k \in \mathbb N $ we have $$ I_{P_k^{(\alpha,\beta)} } (\mu ) =  I_{P_k^{(\alpha,\beta)} } (\sigma ) = 0. $$
 Then $\mu = \sigma$. 
 \end{lemma}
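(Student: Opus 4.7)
The plan is to use the addition formula \eqref{eq:addition} to rewrite $I_{P_k^{(\alpha,\beta)}}(\mu)$ as a sum of squares of ``Fourier'' coefficients of $\mu$ against Laplacian eigenfunctions, then exploit completeness of these eigenfunctions in $C(\Omega)$.

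First, fix $k \ge 1$ and let $\{Y_{k,m}\}_{m=1}^{m_k}$ be the real-valued $L^2(\sigma)$-orthonormal basis of the $k$-th Laplacian eigenspace used to state \eqref{eq:addition}. Inserting the addition formula into the definition of $I_{P_k^{(\alpha,\beta)}}(\mu)$, Fubini gives
\begin{equation*}
0 \;=\; I_{P_k^{(\alpha,\beta)}}(\mu) \;=\; \frac{P_k^{(\alpha,\beta)}(1)}{m_k} \sum_{m=1}^{m_k} \left( \int_{\Omega} Y_{k,m}(x)\, d\mu(x) \right)^{\!2}.
\end{equation*}
Since $P_k^{(\alpha,\beta)}(1) = \binom{k+\alpha}{k} > 0$ and $m_k > 0$, each summand must vanish, so $\int_{\Omega} Y_{k,m}\, d\mu = 0$ for every $k \ge 1$ and $1 \le m \le m_k$. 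The constant eigenfunction $Y_{0,1} \equiv 1$ trivially satisfies $\int_{\Omega} Y_{0,1}\, d\mu = 1 = \int_{\Omega} Y_{0,1}\, d\sigma$.

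Next, since $\Omega$ is a compact connected two-point homogeneous space, the Laplacian eigenfunctions $\{Y_{k,m} : k \ge 0,\, 1 \le m \le m_k\}$ form a complete orthonormal system of $L^2(\Omega,\sigma)$, and their finite linear span $\mathcal{E}$ is uniformly dense in $C(\Omega)$ (this can be viewed as the analogue of the Peter--Weyl theorem, or alternatively follows from Stone--Weierstrass applied to $\mathcal{E}$, which is an algebra stable under conjugation, separates points, and contains constants). Therefore the previous paragraph yields
\begin{equation*}
\int_{\Omega} f\, d\mu \;=\; \int_{\Omega} f\, d\sigma \qquad \text{for every } f \in \mathcal{E},
\end{equation*}
and by uniform density this equality extends to every $f \in C(\Omega)$.

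Finally, since $\Omega$ is a compact Hausdorff space, the Riesz--Markov--Kakutani representation theorem tells us that a Borel probability measure is uniquely determined by its action on $C(\Omega)$. Hence $\mu = \sigma$, completing the proof. The only nontrivial point is the density statement for $\mathcal{E}$; on the sphere this is classical for spherical harmonics, and on the projective spaces $\mathbb{FP}^d$ it is a standard consequence of the fact that the sum $\bigoplus_{k \ge 0} V_k$ is dense in $L^2(\Omega,\sigma)$ together with $\mathcal{E}$ being an algebra (products $P_k^{(\alpha,\beta)}(\cos\vartheta(\cdot,y))\,P_\ell^{(\alpha,\beta)}(\cos\vartheta(\cdot,y))$ expand as finite Jacobi sums).
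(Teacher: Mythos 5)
Your proof is correct, and its second half takes a genuinely different route from the paper. Both arguments open the same way: the paper combines the addition formula with the convolution identity of Lemma \ref{l.convo} to write $I_{P_k^{(\alpha,\beta)}}(\mu)$ as $\gamma_k\int_\Omega\bigl(\int_\Omega P_k^{(\alpha,\beta)}(\cos\vartheta(x,z))\,d\mu(x)\bigr)^2 d\sigma(z)$, while you use the addition formula directly to write it as a positive multiple of $\sum_m\bigl(\int_\Omega Y_{k,m}\,d\mu\bigr)^2$; these are essentially the same observation. The divergence is in how one passes from vanishing moments to $\mu=\sigma$. The paper deduces that $\mu$ and $\sigma$ integrate every zonal function $f(\cos\vartheta(\cdot,z))$ equally, hence agree on all geodesic balls, and then must invoke the non-obvious fact (cited there from Christensen, Hoffmann-J{\o}rgensen, and Davies) that on such spaces two Borel measures agreeing on all balls coincide. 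You instead conclude directly from completeness: the span $\mathcal{E}$ of all Laplacian eigenfunctions is uniformly dense in $C(\Omega)$, so the vanishing of all moments against $Y_{k,m}$, $k\ge 1$, gives $\int f\,d\mu=\int f\,d\sigma$ for every $f\in C(\Omega)$, and Riesz representation finishes. Your route buys a cleaner ending that sidesteps the agreement-on-balls subtlety entirely, but it shifts the weight onto the density claim: the Stone--Weierstrass version requires $\mathcal{E}$ to be an algebra, and your parenthetical justification (linearization of products of zonal functions with a common pole) does not by itself give closedness of $\mathcal{E}$ under arbitrary products $Y_{k,m}Y_{\ell,m'}$. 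That fact is true for these spaces --- e.g.\ because the eigenfunctions are restrictions of polynomials under a standard equivariant embedding, or by appealing to the Peter--Weyl theorem for $G/K$ (approximate in $C(G)$ by matrix coefficients and average over $K$) --- so the gap is one of justification rather than substance, but if you keep this route you should state one of those arguments explicitly rather than gesture at the zonal product expansion.
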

   
 \begin{proof}
 Denoting the constant in \eqref{eq:addition} by $\gamma_k$ and using \eqref{eq.convo}, we obtain for all $k\ge 1$
 \begin{align*}
0 =  I_{P_k^{(\alpha,\beta)} }  (\mu ) & = \gamma_k  \int_\Omega \int_\Omega \int_\Omega  P_k^{(\alpha,\beta)} \big(\cos \vartheta (x,z) \big)  P_k^{(\alpha,\beta)} \big(\cos \vartheta (z,y) \big) d\sigma (z ) d\mu (x) d\mu (y)\\
 & =  \gamma_k  \int_\Omega \left(   \int_\Omega  P_k^{(\alpha,\beta)} \big(\cos \vartheta (x,z) \big)   d\mu (x)  \right)^2  \, d\sigma(z).
 \end{align*}
 Therefore, for all $k\ge 1$ we have $\int_\Omega  P_k^{(\alpha,\beta)} \big(\cos \vartheta (x,z) \big)   d\mu (x) = 0$ for all $z\in \Omega$ by continuity.  This implies that the relation 
$$ \int_\Omega  f  \big(\cos \vartheta (x,z) \big)   d\mu (x) = \int_\Omega   f \big(\cos \vartheta (x,z) \big)   d\sigma (x) $$
holds for every polynomial $f$, and by density, also for every continuous function $f \in C[-1,1]$. A further approximation shows that for each $a\in (-1,1) $, setting $r = \arccos a$,  we have that
$$ \mu \big( B_r (z) \big) =  \int_\Omega  \mathbf{1}_{(a,1]} (\cos \vartheta (x,z) \big)   d\mu (x) = \int_\Omega   \mathbf{1}_{(a,1]} \big(\cos \vartheta (x,z) \big)   d\sigma (x)  = \sigma \big( B_r (z) \big), $$ where $B_r (z)$ denotes a ball in $\Omega$ of geodesic radius $r$ with center at $z$,
i.e. the measures $\mu$ and $\sigma$ coincide on all balls in $\Omega$.

We note that in general metric spaces this is not sufficient to conclude that the two measures coincide, however, under some additional assumptions (e.g. if there exists a measure such that all balls of the same radius have the same measure, or if the space is finite dimensional in the sense that every ball of radius $r$ can be covered by a fixed number of balls of radius $r/2$, uniformly in $r>0$) the conclusion indeed holds, see e.g. \cite{Chr,Hof,Dav}. Since these assumptions clearly apply to $\Omega$, this implies that $\mu =\sigma$, which finishes the proof. 
 \end{proof}
   
   We now introduce a second technical lemma, which is an important first step in  the proof of Theorem \ref{thm:sigma a minimizer for epsilon convergence}. In this lemma, $\big\{ f^{(\varepsilon )} \big\}_{\varepsilon >0}$ will denote an arbitrary family of functions, not necessarily the approximations introduced in  \eqref{eq:fepsilon}.  We shall demonstrate that if $f^{(\varepsilon )} $ approximates $f$ in the sense of energy, it also approximates its Jacobi coefficients. 
   
   \begin{lemma}\label{l.App2}
   Assume that the functions $f^{(\varepsilon )} $ for $\varepsilon >0$ and  $f$ are in $L^1([-1,1], \nu^{(\alpha, \beta)})$  and are bounded from below. Assume also that for all $\mu \in \mathcal{P}(\Omega)$ such that $I_f(\mu) < \infty$,
\begin{equation*}
\lim_{\varepsilon \rightarrow 0^+} I_{f^{(\varepsilon)}}(\mu) = I_f(\mu).
\end{equation*}
Then for every $n\ge 0$, we have the convergence of Jacobi coefficients  
\begin{equation*}
\lim_{\varepsilon \rightarrow 0^+}  \widehat{f^{(\varepsilon)}}_n =  \widehat{f}_n.
\end{equation*}
\end{lemma}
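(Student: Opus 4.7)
The case $n=0$ is immediate: since $P_0^{(\alpha,\beta)}\equiv 1$, the definition \eqref{eq:Jacobi Coefficients} combined with \eqref{eq:Equivalence with 1 dim setting} gives $\widehat{f}_0 = I_f(\sigma)$ and $\widehat{f^{(\varepsilon)}}_0 = I_{f^{(\varepsilon)}}(\sigma)$; since $f\in L^1$ implies $I_f(\sigma) < \infty$, the hypothesis applied to $\mu=\sigma$ yields $\widehat{f^{(\varepsilon)}}_0\to \widehat{f}_0$.

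For $n\ge 1$, the strategy is to realize $\widehat{f}_n$ as a specific affine combination of energies $I_f(\mu)$ and $I_f(\sigma)$ to which the hypothesis can then be applied. Fix $y_0\in\Omega$ and $c\in\bigl(0,\|P_n^{(\alpha,\beta)}\|_{L^\infty[-1,1]}^{-1}\bigr)$, and define
\begin{equation*}
d\mu(x) \;=\; \bigl(1 + c\,P_n^{(\alpha,\beta)}(\cos\vartheta(x,y_0))\bigr)\, d\sigma(x).
\end{equation*}
This is a Borel probability measure (the density is non-negative and integrates to $1$ because $\int_\Omega P_n^{(\alpha,\beta)}\,d\sigma=0$ for $n\ge 1$) with density bounded by $2$, so $I_{|f|}(\mu)\le 4\int_{-1}^1|f|\,d\nu^{(\alpha,\beta)}<\infty$, and similarly for $f^{(\varepsilon)}$. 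Expanding $I_f(\mu)-I_f(\sigma)$ in powers of $c$, the linear term reduces, after using \eqref{eq:Equivalence with 1 dim setting} to compute the inner $y$-integral of $f(\cos\vartheta(x,y))$ as the constant $\widehat{f}_0$, to $2c\,\widehat{f}_0\int_\Omega P_n^{(\alpha,\beta)}(\cos\vartheta(\cdot,y_0))\,d\sigma = 0$, so that $I_f(\mu)-I_f(\sigma) = c^2 B(y_0)$, where
\begin{equation*}
B(y_0) := \int_\Omega\int_\Omega f(\cos\vartheta(x,y))\, P_n^{(\alpha,\beta)}(\cos\vartheta(x,y_0))\, P_n^{(\alpha,\beta)}(\cos\vartheta(y,y_0))\,d\sigma(x)\,d\sigma(y).
\end{equation*}

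The crux is to evaluate $B(y_0)$. Rather than invoking $L^2$-convergence of the Jacobi series of $f$ (which may fail for $f\in L^1$), I use the two-point homogeneity of $\Omega$: substituting $x\mapsto\phi(x)$, $y\mapsto\phi(y)$ for any isometry $\phi$ of $\Omega$ with $\phi(y_1)=y_0$ shows $B(y_0)=B(y_1)$, so $B$ is independent of $y_0$. Averaging over $y_0\in\Omega$ against $\sigma$, using Fubini, and applying Lemma~\ref{l.convo} to the inner $y_0$-integral yields
\begin{equation*}
B(y_0) \;=\; \int_\Omega B(y_0)\,d\sigma(y_0) \;=\; \frac{P_n^{(\alpha,\beta)}(1)}{m_n^{(\alpha,\beta)}}\int_\Omega\int_\Omega f(\cos\vartheta(x,y))\,P_n^{(\alpha,\beta)}(\cos\vartheta(x,y))\,d\sigma(x)\,d\sigma(y) \;=\; \widehat{f}_n\,\frac{P_n^{(\alpha,\beta)}(1)^3}{(m_n^{(\alpha,\beta)})^2},
\end{equation*}
where the last equality combines \eqref{eq:Equivalence with 1 dim setting} with the definition \eqref{eq:Jacobi Coefficients} of $\widehat{f}_n$. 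Consequently $\widehat{f}_n$ is a fixed constant multiple of $I_f(\mu)-I_f(\sigma)$, and the very same identity holds with $f$ replaced by $f^{(\varepsilon)}$. Applying the hypothesis of the lemma to the two probability measures $\mu$ and $\sigma$ (both of finite $f$-energy) then delivers $\widehat{f^{(\varepsilon)}}_n\to\widehat{f}_n$. The main delicate step in this plan is precisely the identification of $B(y_0)$ without recourse to $L^2$ expansions; the symmetry argument reduces it to a single application of the convolution identity of Lemma~\ref{l.convo}.
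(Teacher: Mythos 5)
Your proposal is correct and follows essentially the same route as the paper's proof: both use the test measure with density $1 + c\,P_n^{(\alpha,\beta)}(\cos\vartheta(\cdot,z))$, both invoke the convolution identity of Lemma~\ref{l.convo} together with two-point homogeneity to express $\widehat{f}_n$ (up to explicit constants and $\widehat{f}_0$) as a difference of energies, and both then conclude from the energy-convergence hypothesis applied to this measure and to $\sigma$. The only difference is cosmetic -- you apply Lemma~\ref{l.convo} after averaging the constant quantity $B(y_0)$ over $y_0$, whereas the paper applies it pointwise at the kernel level before integrating against $d\mu_z\,d\mu_z$.
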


\begin{proof}
First, we notice that the conclusion is trivial for $n=0$, since $I_f (\sigma) = \widehat{f}_0$ and the fact that $f \in L^1([-1,1], \nu^{(\alpha, \beta)})$  implies that $I_f (\sigma ) <\infty$. 

Now assume that $n\ge 1$. Using \eqref{eq.convo} and the fact that $\int_\Omega  P_n^{(\alpha,\beta)} \big(\cos \vartheta (x,z) \big)  d\sigma (z) = 0$, we find that 
\begin{equation*}
1+ c'_n  P_n^{(\alpha,\beta)} \big(\cos \vartheta (x,y) \big)  = \int_\Omega \left(1 + c_n P_n^{(\alpha,\beta)} \big(\cos \vartheta (x,z) \big) \right) \left( 1+ c_n P_n^{(\alpha,\beta)} \big(\cos \vartheta (z,y) \big)\right) \, d\sigma (z), 
\end{equation*}
where as before $c'_n = c^2_n m_n / P_n^{(\alpha,\beta)}(1)$ and $c_n$ is chosen in such a way that $1 + c_n P_n^{(\alpha,\beta)}  \ge 0$. Therefore we can write
\begin{align*}
\widehat{f^{(\varepsilon )}}_0 +  c'_n  \widehat{f^{(\varepsilon )}}_n &  = \int_\Omega \int_\Omega f^{(\varepsilon )}  \big(\cos \vartheta (x,y) \big)  \left( 1+ c'_n  P_n^{(\alpha,\beta)} \big(\cos \vartheta (x,y) \right) \, d\sigma(x) d\sigma(y)\\  
& = \int_\Omega  I_{ f^{(\varepsilon )} }  (\mu_z) \, d\sigma (z) = I_{ f^{(\varepsilon )} }  (\mu_{z_0}) \,\,\, \textup{ for any }\,\, z_0 \in \Omega,
\end{align*}
where we set $d\mu_z (x)  = \left(1 + c_n P_n^{(\alpha,\beta)} \big(\cos \vartheta (x,z) \big) \right) d\sigma (x)  $ and observe that
\begin{equation*}
 I_{ f^{(\varepsilon )} }  (\mu_{z}) = \int_\Omega \int_\Omega  f^{(\varepsilon )} \big(\cos \vartheta (x,y) \big) \,  \left(1 + c_n P_n^{(\alpha,\beta)} \big(\cos \vartheta (x,z) \big) \right) \left( 1+ c_n P_n^{(\alpha,\beta)} \big(\cos \vartheta (z,y) \big)\right) d\sigma (x) d\sigma (y) 
\end{equation*}
is independent of $z\in \Omega$ due to two-point homogeneity of $\Omega$ and isometry invariance of $\sigma$. 

Performing an identical computation for $f$ we observe that $I_f (\mu_z ) <\infty$, since Jacobi coefficients of $f$ are well-defined. Therefore, by the assumption of the lemma,  $I_{ f^{(\varepsilon )} }  (\mu_z)  \rightarrow I_{ f }  (\mu_z) $ as $\varepsilon \rightarrow 0$, and thus 
\begin{equation*}
\lim_{\varepsilon \rightarrow 0^+} \left( \widehat{f^{(\varepsilon )}}_0 +  c'_n  \widehat{f^{(\varepsilon )}}_n  \right) = \widehat{f}_0 +  c'_n  \widehat{f}_n. 
\end{equation*}
Since the lemma is already established for $n=0$, this finishes the proof. 
\end{proof}

\section{Acknowledgments}
The authors would like to thank Carlos Beltr\'{a}n, Martin Ehler, Peter Grabner, Doug Hardin, Danylo Radchenko, Ed Saff, and Stefan Steinerberger for their insights. The first author was funded by the NSF grant DMS 2054606. The second author was funded by the Austrian Science Fund FWF project F5503 part of the Special Research Program (SFB) “Quasi-Monte Carlo Methods: Theory and Applications,” and the NSF Mathematical Sciences Postdoctoral Research Fellowship Grant 2202877.


\begin{thebibliography}{10}

\bibitem{AndDGMS} A. Anderson, M. Dostert, P.J. Grabner, R.W. Matzke, T.A. Stepaniuk, \emph{Riesz and Green energy on Projective Spaces}, Transactions of the American Mathematical Society, Series B, \textbf{10} (2023), 1039--1076.

\bibitem{Baez} J. Baez, \emph{The Octonions}, Bulletin of the American Mathematical Society, \textbf{39} no. 2 (2002), 145--205.


\bibitem{BelL} C. Beltr\'an, F. Lizarte,  \emph{A Lower Bound for the Logarithmic Energy on  $\mathbb S^2$ and for the Green Energy on $\mathbb S^n$.} Constr. Approx. {\bf{58}} (2023), 565--587.

\bibitem{BilD} D. Bilyk and F. Dai. \emph{Geodesic distance Riesz energy on the sphere}, Trans. AMS \textbf{372} (2019),  3141--3166.

\bibitem{BilDM} D. Bilyk, F. Dai, R. Matzke. {\emph{Stolarsky principle and energy optimization on the sphere}} (2018) Constructive Approx., {\bf{48}}  (1), 31--60. 

\bibitem{BilGMPV} D. Bilyk, A. Glazyrin, R. Matzke, J. Park, O. Vlasiuk, \emph{Optimal Measures for $p$-frame Energies on Spheres}, Revista Matem\'{a}tica Iberoamericana, \textbf{38} (2022), no. 4, 1129--1160.


\bibitem{BilM} D. Bilyk, R. Matzke, \emph{On the Fejes T\'oth problem about the sum of angles between lines},
 Proc. AMS  \textbf{147}  (2019),  no. 1, 51--59.

\bibitem{BilMV} D. Bilyk, R.W. Matzke, O. Vlasiuk, \emph{Positive Definiteness and the Stolarsky Invariance Principle}, Journal of Mathematical Analysis and Applications, \textbf{513}(2), 126220 (2022).

\bibitem{Bj} G. Bj\"{o}rck,  \emph{Distributions of positive mass, which maximize a certain generalized energy integral}, Ark. Mat.  \textbf{3}  (1956), 255--269.

\bibitem{Boc} S. Bochner, \emph{Hilbert Distances and Positive Definite Functions},  Annals of Mathematics (2), \textbf{42} (1941), 647--656.


\bibitem{BorHS} S.V. Borodachov, D.P. Hardin, and E.B. Saff. \emph{ Discrete Energy on Rectifiable Sets} (2019), Springer Monographs in Mathematics.



\bibitem{ChaM93} S. Chanillo, B. Muckenhoupt, \emph{Weak Type Estimates for Cesaro Sums of Jacobi Polynomial Series}. Memoirs of the AMS, \textbf{102}(487), (1993).

\bibitem{CheHS} X.Chen, D.P. Hardin, E.B. Saff, \emph{On the Search for Tight Frames of Low Coherence}, Journal of Fourier Analysis and Applications, \textbf{27} (2021), Paper No. 2.


\bibitem{Chr} J. P. R. Christensen, \emph{On some measures analogous to Haar measure},  Math. Scand. \textbf{26} (1970), 103--106.

\bibitem{CohKM} H.Cohn, A.Kumar, G. Minton, \emph{Optimal Simplices and Codes in Projective Spaces}, Geometry \& Topology, \textbf{20} no. 3 (2016), 1289--1357.

\bibitem{CohK} H. Cohn,  A. Kumar. Universally optimal distribution of points on spheres. J. Amer. Math. Soc. 20 (2007), 99--149.






\bibitem{DaiX} F. Dai and Y. Xu. \emph{Approximation Theory and Harmonic Analysis on Spheres and Balls} (2013), Springer Monographs in Mathematics, Springer, New York, NY.


\bibitem{DamG} S. B. Damelin, P. J. Grabner, \emph{Energy functionals, numerical integration and asymptotic equidistribution on the sphere}, Journal of Complexity,
\textbf{19}(3), (2003), 231--246.


\bibitem{Dav} Roy O. Davis, \emph{Measures not approximable or not specifiable by means of balls}, Mathematika \textbf{18} (1971), 157--160.


\bibitem{DGS} P. Delsarte, J.M. Goethals, J.J. Seidel, \emph{Spherical codes and designs}. Geometriae Dedicata 6 (1977), no. 3, 363--388.

\bibitem{FejT} L. Fejes T\'{o}th, \emph{\"{U}ber eine Punktverteilung auf der Kugel}, Acta Math. Acad. Sci. Hungar \textbf{10} (1959), 13--19 (in German).


\bibitem{FodVZ} F. Fodor, V. V\'{i}gh, and T. Zarn\'{o}cz, \emph{On the angle sum of lines}, Arch. Math \textbf{106} (2016), 91--100.





\bibitem{Gan} R. Gangolli, \emph{Positive definite kernels on homogeneous spaces and certain stochastic processes related to L\'{e}vy's Brownian motion of several parameters}. Ann. Inst. H. Poincar\'{e} Sect. B (N.S.) \textbf{3} (1967), 121--226.

\bibitem{Gor} D.V. Gorbachev, D.R. Lepetkov, \emph{Refinement of the mean angle estimation in the Fejeh T\'{o}th problem}. Chebyshevskii Sbornik.  \textbf{23(3)} (2022), 245--248. (In Russian)

\bibitem{Graf} M. Gr\"af. \emph{A unified approach to scattered data approximation on  $\mathbb{S}^3$ and $SO(3)$}. Adv. Comput. Math {\bf 37} (2012), 379--392.


\bibitem{Gra} L. Grafakos. \emph{Classical Fourier Analysis}, 3rd ed. (2014), Graduate Texts in Math., Springer New York, NY.


 
\bibitem{Helg} S. Helgason,  \emph{Groups and Geometric Analysis, Integral Geometry, Invariant Differential Operators, and Spherical Functions, Mathematical Surveys and Monographs}, vol. 83, American Mathematical Society, 2000.

\bibitem{Hof} J. Hoffmann-J{\o}rgensen, \emph{Measures which agree on balls},  Math. Scand., \textbf{37}, No. 2 (1975),  319--326.


\bibitem{Jia} M. Jiang, \emph{On the sum of distances along a circle},
 Discrete Math.  \textbf{308}  (2008),  no. 10, 2038--2045.
 
 \bibitem{Lev} V.I. Levenshtein,  \emph{Universal bounds for codes and designs}. In \emph{Handbook of coding theory},
Vol. I, II, pp. 499--648. North-Holland, Amsterdam, 1998.

\bibitem{LimM21} T. Lim, R. McCann, \emph{On Fejes T\'oth's conjectured maximizer for the sum of angles between lines}, 
 Appl. Math. Optim.  \textbf{84}  (2021),  no. 3, 3217--3227.
 
\bibitem{LimM22}   T. Lim, R. McCann, \emph{Maximizing expected powers of the angle between pairs of points in projective space.}
With an appendix by D. Bilyk, A. Glazyrin, R. Matzke, J. Park and O. Vlasiuk.
 Probab. Theory Related Fields  \textbf{184}  (2022),  no. 3-4, 1197--1214.




\bibitem{Lyu} Y. I. Lyubich, \emph{On tight projective designs.} Des. Codes Cryptogr. \textbf{51} (2009), 21--31.
 
\bibitem{Mer} J. Mercer, \emph{Functions of Positive and Negative Type and their Connection with the Theory of Integral Equations.} Philosophical Transactions of the Royal Society A, \textbf{209}(441-458), 415--446 (1909).


\bibitem{Pet} F. Petrov (https://mathoverflow.net/users/4312/fedor-petrov), maximum sum of angles between $n$ lines, URL (version: 2014-07-09): https://mathoverflow.net/q/173712 .

\bibitem{Sch} I.J. Schoenberg. \emph{Positive definite functions on spheres}. Duke Mathematical Journal, \textbf{9} (1941), 96--108.

\bibitem{freak2} R. Schneider. \emph{\"Uber eine Integralgleichung in der Theorie der konvexen K\"orper}. Math. Nachr.,
\textbf{44},  55--75 (1970). 


\bibitem{Sto} K. B. Stolarsky. Sums of distances between points on a sphere. II. {\it Proc. Amer. Math. Soc.} {\bf 41}, (1973) 575--582.



\bibitem{freak1} P. Ungar, \emph{Freak Theorem about Functions on a Sphere}. J. London Math. Soc., 1-29 (1), 
100--103  (1954)


\bibitem{Wang} H. C. Wang, \emph{Two-point homogeneous spaces}, Annals of Mathematics {\bf 55} (1952), 177--191.







\end{thebibliography}
\end{document}